\documentclass[11pt]{amsart}
\pdfoutput=1

\usepackage[utf8]{inputenc}
\usepackage[margin=3cm]{geometry}
\usepackage{amssymb, amsmath, amsthm, amsfonts}
\usepackage{color}
\usepackage{mathrsfs}
\usepackage{physics}
\usepackage{mathtools}
\usepackage{tikz}
\usepackage{faktor}
\usepackage{tikz-cd}
\usepackage{bm}
\usepackage{hyperref}
\usepackage{csquotes}
\usepackage{stmaryrd}
\usepackage{yhmath}
\usepackage{enumitem}
\usepackage{microtype}

\DeclareRobustCommand{\SkipTocEntry}[5]{}

\numberwithin{equation}{section}

\newtheorem{theorem}{Theorem}[section]
\newtheorem*{theorem*}{Theorem}
\newtheorem{lemma}[theorem]{Lemma}
\newtheorem{corollary}[theorem]{Corollary}

\newtheorem{prop}[theorem]{Proposition}

\newtheorem*{conjecture*}{Conjecture}
\theoremstyle{remark}
\newtheorem{remark}[theorem]{Remark}
\theoremstyle{remark}

\theoremstyle{remark}
\newtheorem*{notation*}{Notation}
\theoremstyle{remark}
\newtheorem*{conventions*}{Conventions}

\usepackage[UKenglish]{isodate}
\usepackage[UKenglish]{babel}
\cleanlookdateon

\newcommand{\R}{\mathbf{R}}
\newcommand{\N}{\mathbf{N}}

\newcommand{\E}{\mathbf{E}}
\newcommand{\Prob}{\mathcal{P}}

\newcommand{\Diff}{\operatorname{Diff}}
\newcommand{\Vect}{\mathfrak{X}}
\newcommand{\Hspace}{\mathcal{H}}
\newcommand{\Vspace}{\mathcal{V}}
\newcommand{\Tspace}{\mathcal{T}}

\newcommand{\Kop}{\mathsf{K}}
\newcommand{\Aop}{\mathsf{A}}
\newcommand{\Bop}{\mathsf{B}}
\newcommand{\Mop}{\mathsf{M}}
\newcommand{\Iop}{\mathsf{I}}
\newcommand{\Lop}{\mathsf{L}}
\newcommand{\Pop}{\mathsf{P}}
\newcommand{\Rop}{\mathsf{R}}
\newcommand{\Div}{\operatorname{div}}
\renewcommand{\grad}{\operatorname{grad}}

\newcommand{\Law}{\operatorname{Law}}
\newcommand{\Cov}{\operatorname{Cov}}
\newcommand{\Ent}{\operatorname{Ent}}

\renewcommand{\tr}{\operatorname{tr}}
\newcommand{\id}{\operatorname{id}}

\newcommand{\Ran}{\operatorname{Ran}}
\newcommand{\Ker}{\operatorname{Ker}}

\newcommand{\Sym}{\operatorname{Sym}}

\newcommand{\essinf}{\operatorname*{ess\,inf}}

\newcommand{\Wass}{\mathsf{W}}
\newcommand{\Stein}{\mathsf{S}}
\newcommand{\Fish}{\mathsf{F}}

\title[Otto and Stein descriptions of diffusion]{On two differing geometric descriptions of the passage from microscopy to macroscopy in Markov theory}

\author{Dalton A R Sakthivadivel}
\address{Department of Mathematics, CUNY Graduate Centre, 365 Fifth Avenue, New York, NY 10016}
\email{dsakthivadivel@gc.cuny.edu}
\date{\today}

\subjclass[2020]{Primary 49Q22, 46T05, 58B20, 60H10; Secondary 35Q84, 47D07, 53B12, 60F10, 62B10}

\begin{document}

\begin{abstract}

	In various disparate settings one studies how random processes give way to parabolic partial differential equations and in turn to functional inequalities involving gradients and variational calculus. Particles subject to gradient forces, the fields which move them, the empirical measures which they form, the smooth laws which those measures approximate, and differentiable functionals of these laws constitute a hierarchy of descriptions of diffusion. Intervening on this hierarchy is a choice of how a conintuity operator converts particle velocity to the evolution of measures. Here a central object is constructed mediating two different such descriptions. The space of smooth positive probability densities on a closed Riemannian manifold is treated as a Fr\'echet manifold whose full continuous cotangent space consists of nonconstant distributions and whose regular cotangent space consists of sufficiently regular nonconstant functions. Beginning from the operator on this space arising as the infinitesimal lift of diffeomorphisms of the base manifold, two different Hilbert completions of this space of densities account for a wide class of objects relevant, leading to a partial universalisation of the microscopic--mesoscopic--macroscopic hierarchy in Markov analysis.
\end{abstract}

\maketitle
\tableofcontents

\section{Introduction}\label{introduction-sec}

Diffusion spans several levels of description of the same physical and probabilistic evolution. A collection of particles follows random sample paths; the associated transition operators form a Markov semigroup; the one-time laws solve a parabolic equation; functionals of these laws satisfy variational identities and functional inequalities; the trend towards equilibrium may then be interpreted as a gradient evolution on a space of probability measures. Dynkin's formula relates the process and its generator, H\"ormander's theory of subellipticity supplies regularity under bracket hypotheses, and Bakry--\'Emery calculus controls gradients and entropy through the carr\'e du champ \cite{dynkin1965markov,hormander1967hypoelliptic,bakry1985diffusions}. Sample paths thereby act as if they were characteristics of a second order equation. Their one-time laws correspondingly admit a deterministic continuity representation whose velocity contains the score of the evolving density, whilst superposition and martingale problem results recover path measures from weak Fokker--Planck solutions.

The hierarchy studied here may be read in two directions. Particles determine empirical measures, empirical measures converge to laws, and differentiable functionals assign energies or observables to those laws. Variational evolution runs in the reverse direction: the differential of a functional gives a scalar covector, the spatial differential of that covector gives a force on the sample space, a geometry converts the force into a particle velocity, and the continuity equation converts the velocity into the tangent of the law. The two directions meet in the diagram
\[
	\begin{tikzcd}[column sep=large]
		(x_1,\ldots,x_N) \arrow[r,"\mathrm{empirical\ measure}"] & \mu^N \arrow[r,"\mathrm{limit}"] & \rho \arrow[r,"\mathcal F"] & \R \\
		(\dot x_1,\ldots,\dot x_N) & v_{\rho} \arrow[l,"\mathrm{evaluation}"'] & \dot\rho \arrow[l,"\mathrm{lift}"'] & \dd{\mathcal F}(\rho) \arrow[l,"\mathrm{geometry}"']
	\end{tikzcd}
\]
where the lower row suppresses the intermediate spatial differential only for the moment. The purpose of the paper is to construct every arrow, to identify the quotient ambiguities carried by each of them, and to determine when two geometries induce evolutions which agree in a chosen sense.

The common maping we cast our eyes towards is the kinematic operator arising from the infinitesimal pushforward of diffeomorphisms of a base manifold
\[
	\Aop_{\rho}v=-\Div(\rho v).
\]
A vector field $v$ moves sample points, whilst $\Aop_{\rho}v$ records the resulting motion of their law. Fields in $\Ker\Aop_{\rho}$ rearrange particles without changing the density to first order; consequently, a density tangent is a coset of particle velocities. A geometry enters the picture when one chooses a norm on admissible velocities and selects one representative of the coset generated by a functional force.

Two selections are considered. The weighted $L^2(\rho)$ norm selects the gradient itself and produces Otto geometry. A reproducing kernel Hilbert norm selects the kernel Riesz representative and produces Stein geometry. Their covector to tangent maps are
\[
	\Kop^{\Wass}_{\rho}f=-\Div(\rho\grad f)
\]
and
\[
	\Kop^{\Stein}_{\rho}f=-\Div\bigl(\rho\Iop_{\rho}^{*}\grad f\bigr),
\]
where $\Iop_{\rho}$ includes the vector-valued reproducing kernel Hilbert space into $L^2(\rho;TM)$. Both evolutions therefore begin from the same scalar covector and the same spatial force; however, a difference lies in the response rule which turns this force into particle motion.

The principal question is the meaning of agreement and circumstances of disagreement between these response rules. Equality of the selected particle fields gives the same trajectories before passage to the law. Equality modulo $\Ker\Aop_{\rho}$ gives the same law tangent whilst allowing different microscopic rearrangements. Positive proportionality gives the same unparametrised curve of laws with a different clock. Equality after a prescribed observation gives the same moments, statistical parameters or coarse variables without requiring equality of the ambient law. Equivalence of the two quadratic forms compares dissipation rates and fluctuation costs, although it does not by itself identify the deterministic trajectory. A single functional probes one covector direction at each density; agreement for one gradient flow is consequently weaker than agreement of the geometries on an entire cotangent space.

The comparison becomes explicit after the Stein velocity is viewed in the weighted $L^2(\rho)$ space. Let $\mathsf{P}^{\mathrm{hor}}_{\rho}$ be the orthogonal projection onto the closure of gradients and set
\[
	\Mop_{\rho}=\mathsf{P}^{\mathrm{hor}}_{\rho}\Iop_{\rho}\Iop_{\rho}^{*}\big|_{H^{\Wass}_{\rho}}.
\]
The operator $\Mop_{\rho}$ is positive and self-adjoint on the Otto horizontal space. The vertical component of $\Iop_{\rho}\Iop_{\rho}^{*}\grad f$ disappears under the continuity operator, giving
\[
	\Kop^{\Stein}_{\rho}f=\Aop_{\rho}\Mop_{\rho}\grad f.
\]
Thus the two density evolutions agree for the covector $[f]$ exactly when $\Mop_{\rho}\grad f=\grad f$. The same formula gives the defect in the Otto tangent norm, the time change criterion, the comparison of dissipation, and the finite dimensional design theorem developed in \S\ref{geometry-comparison-sec}.

\subsection{An historical account of the literature pertaining to these results}\label{history-literature-subsec}

Brownian motion, the Fokker--Planck equation and the Kolmogorov equations supplied the classical passage between random motion and the evolution of laws \cite{fokker1914mittlere,planck1917satz,dynkin1965markov}. H\"ormander's theorem concerns the regularity of the resulting differential operators, whilst Bakry--\'Emery theory derives functional inequalities and convergence estimates from their first and second order differential structure \cite{hormander1967hypoelliptic,bakry1985diffusions}. The probability current, forwards and backwards drifts, and stochastic flows give several pathwise and marginal representations of the same diffusion law \cite{nelson1967dynamical,haussmann1986time,kunita1990stochastic}.

Moser's theorem identifies smooth positive densities of fixed mass as a diffeomorphism orbit, whose differential is precisely $v\mapsto-\Div(\rho v)$ \cite{moser1965volume}. Ebin--Marsden and the subsequent geometry of diffeomorphism groups provide Sobolev and Fr\'echet settings for such actions \cite{ebin1970groups,khesin2013geometry,hamilton1982inverse,kriegl1997convenient}. Benamou--Brenier turned the continuity equation and kinetic energy into a dynamic formulation of optimal transport \cite{benamou2000computational}. Jordan--Kinderlehrer--Otto then obtained the Fokker--Planck equation by a Wasserstein minimizing movement scheme, Otto formulated the corresponding formal Riemannian calculus, and Lott established connection and curvature formulae on the smooth positive-density locus \cite{jordan1998variational,otto2001geometry,lott2008geometric}. The metric theory of Ambrosio--Gigli--Savar\'e supplies the completed formulation used beyond smooth densities \cite{ambrosio2008gradient}.

Fisher and Rao initiated the differential geometry of statistical models, whilst Chentsov's theorem and modern information geometry explain the distinguished role of the Fisher metric and its dual coordinate systems \cite{fisher1922mathematical,rao1945information,cencov1982statistical,ay2017information}. Pistone--Sempi, Gibilisco--Pistone and Newton constructed nonparametric manifolds on which logarithms, divergences and statistical derivatives possess a rigorous global calculus \cite{pistone1995infinite,gibilisco1998connections,newton2012hilbert,newton2016balanced,newton2018differentiable,newton2019sobolev}. These carriers organise differentiable functionals of measures. Spatial transport introduces the further choice of a gradient, divergence and velocity norm.

Stein's operator method began as a means of characterising a target law and controlling approximation error \cite{stein1972bound,stein1986approximate}. Kernel Stein discrepancies convert the operator into a computable Hilbert norm \cite{liu2016kernelized,gorham2017measuring}, whilst Stein variational gradient descent converts the same Hilbert structure into a deterministic interacting particle system \cite{liu2016stein,liu2017stein}. Lu--Lu--Nolen proved a mean field limit, and subsequent work developed population and finite particle convergence estimates \cite{lu2019scaling,korba2020nonasymptotic,salim2022convergence,shi2023finite,banerjee2025improved}. Duncan--N\"usken--Szpruch and N\"usken--Renger constructed the associated tangent and cotangent geometries and related them to many particle and long time asymptotics \cite{duncan2023geometry,nusken2023large}. Operator valued reproducing kernels provide the natural language for vector fields on manifolds and Hilbert spaces \cite{micchelli2005learning,carmeli2010vector}.

Onsager's reciprocal relations already formulate dissipative evolution as a relation between thermodynamic forces and fluxes \cite{onsager1931reciprocal1,onsager1931reciprocal2}. Large deviations theory makes this relation quantitative: independent Brownian particles select the Wasserstein action, whilst correlated particle noise can select a kernel action \cite{adams2011large,duong2013wasserstein,erbar2015multiple,mielke2014variational,nusken2023large}. Agreement of the zero-cost evolution does not force agreement of the fluctuation action; this distinction becomes one of the interpretations of geometric equivalence below.

The contribution of this paper is the common particle-to-functional hierarchy and the comparison theory attached to it. The quotient--Riesz construction gives rigorous fibrewise Otto and Stein Hilbert pairs; the effective mobility $\Mop_{\rho}$ describes their relation on the common horizontal space; exact, time changed, observed and quantitative equivalences are separated; finite rank kernels are shown to realise a prescribed positive response on a chosen force subspace. Relative entropy, statistical submanifolds, Gaussian models, diffusion currents, resolvents and path laws then serve as instances of the same hierarchy. A smoothly varying Stein Hilbert bundle, connection or global flow requires uniform range and regularity hypotheses beyond the fibrewise construction.

\subsection{Organisation of the manuscript}

The first part constructs the hierarchy up to the choice of geometry. Smooth densities and their regular and distributional cotangents are treated in \S\ref{density-manifold-sec}. The diffeomorphism quotient and the continuity operator are established in \S\ref{continuity-operator-sec}. Hilbert velocity norms and their Riesz selections are introduced in \S\ref{hilbert-transport-sec}; Otto and Stein geometry follow in \S\ref{otto-sec} and \S\ref{stein-sec}. The central comparison theorem occupies \S\ref{geometry-comparison-sec}, where equality of particles, laws, time changes, observations and quadratic costs are distinguished.

The second part turns differentiable functionals into force fields. Regular first variations and relative entropy are treated in \S\ref{functional-sec}. Statistical submanifolds in \S\ref{statistical-submanifolds-sec} show how a prescribed finite dimensional evolution can be realised by particle fields, whilst the Gaussian model in \S\ref{gaussian-sec} makes law equivalent but particle inequivalent affine lifts explicit.

The third part descends towards particles and paths. Empirical measures and interacting systems are studied in \S\ref{particles-sec}. Diffusion laws, probability flow maps and superposition are compared in \S\ref{diffusion-transport-sec}. Scalar and Lions derivatives in \S\ref{lions-sec} identify the cotangent potential and its spatial force. Large deviations actions in \S\ref{large-deviation-sec} explain why equal deterministic evolutions may retain different microscopic fluctuation costs.

The final part returns to operators. Generators, semigroups and tangent propagation are separated in \S\ref{semigroup-sec}. Reversible semigroups and resolvents construct Stein solutions in \S\ref{resolvent-sec}. Feynman--Kac weights in \S\ref{feynman-kac-sec} give an equivalence after path evaluation and normalisation. Abstract Wiener spaces and path laws are treated in \S\ref{infinite-dimensional-sec}; \S\ref{synthesis-sec} gathers the hierarchy and its equivalence relations.

\begin{remark}\label{scope-rem}
	Compact base manifolds remove boundary and decay terms from the geometric core. Euclidean, noncompact and infinite dimensional extensions are stated under their respective coercivity, integrability, closability and quasi-regularity assumptions. Every comparison theorem is fibrewise unless a common chart and well posed flow have been specified. Uniform control of the varying kernel range and nullspace is the additional analytic requirement for a global Stein bundle or connection.
\end{remark}

Our main result is gathered together in the following statement (proven as Theorem \ref{common-otto-stein-thm}).
\begin{theorem*}
	Let $M$ be a connected closed Riemannian manifold, let $\Prob^{\infty}_+(M)$ be its Fr\'echet manifold of smooth positive probability densities and let $\Hspace_k\hookrightarrow C^1(M;TM)$ be a vector-valued reproducing kernel Hilbert space. Then the following statements hold.
	\begin{enumerate}[label=(\roman*)]
		\item The continuity operator $\Aop_{\rho}v=-\Div(\rho v)$ is the differential of the pushforward action, is surjective on the smooth core and identifies a density tangent with a particle velocity modulo the $\rho$ divergence free kernel.
		\item The weighted $L^2(\rho)$ norm and the reproducing kernel Hilbert norm produce fibrewise Otto and Stein tangent--cotangent Hilbert pairs by quotient and Riesz representation; their Onsager maps factor as $\Kop^{\bullet}_{\rho}=\Aop_{\rho}\Bop^{\bullet}_{\rho}$.
		\item Let $\Mop_{\rho}=\mathsf{P}^{\mathrm{hor}}_{\rho}\Iop_{\rho}\Iop_{\rho}^{*}$ on the Otto horizontal space. A regular covector $[f]$ induces the Otto tangent $\Aop_{\rho}\grad f$ and the Stein tangent $\Aop_{\rho}\Mop_{\rho}\grad f$. These tangents agree exactly when $\Mop_{\rho}\grad f=\grad f$, whilst the particle fields agree exactly when $\Iop_{\rho}\Iop_{\rho}^{*}\grad f=\grad f$.
		\item Positive proportionality of the two tangents gives the same unparametrised law curve, equality after a differentiable observation gives the same prescribed evolution of that observation, and spectral bounds on $\Mop_{\rho}$ compare dissipation and tangent cost. Agreement for every regular force in a closed force space is equivalent to $\Mop_{\rho}=\id$ there. Any positive self-adjoint response on a finite dimensional smooth force subspace can be realised by a finite rank reproducing kernel.
		\item Relative entropy supplies the common covector $[\log(\rho/\pi)]$. Its Wasserstein and Stein descents agree according to the preceding criterion; equality of their instantaneous dissipations is weaker than equality of their law tangents.
		\item Every regular minimal exponential family has smooth gradient solutions of the required Stein equations on the closed-manifold core. Membership in a prescribed kernel space is an additional range condition; when it holds, covector descent in expectation coordinates is realised by particle transport and agrees with the prescribed Fisher mirror evolution on the model.
	\end{enumerate}
\end{theorem*}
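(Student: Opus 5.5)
The proof treats the six items in turn. Each reduces to a fibrewise Hilbert-space statement at a fixed $\rho\in\Prob^{\infty}_+(M)$, plus one piece of elliptic theory on the closed manifold $M$.

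For (i), differentiate $t\mapsto(\phi_t)_{*}\rho$ along the flow $\phi_t$ of a smooth field $v$. The weak form $\int g\,\dd{(\phi_t)_*\rho}=\int g\circ\phi_t\,\rho$ has derivative $\int\langle\grad g,v\rangle\rho$, which is the pairing with $-\Div(\rho v)$. Surjectivity onto smooth mean-zero functions $h$ comes from the weighted Laplacian. Since $M$ is connected and closed, $\Lop_\rho u=-\Div(\rho\grad u)$ is elliptic, self-adjoint with respect to the Riemannian volume, and has kernel equal to the constants. Fredholm theory then gives a smooth solution $u$ of $\Lop_\rho u=h$, and $v=\grad u$ is a preimage. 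The identification of tangents with cosets is the first isomorphism theorem for $\Aop_\rho$. This yields the Moser-type statement used throughout.

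For (ii), equip the velocity space with a Hilbert norm: $L^2(\rho;TM)$ for Otto and $\Hspace_k$ for Stein. The tangent norm is the quotient norm modulo $\Ker\Aop_\rho$, so the minimal representative is the orthogonal projection onto $(\Ker\Aop_\rho)^{\perp}$. For a regular covector $[f]$, the force is the functional $v\mapsto\int\langle\grad f,v\rangle\rho=\langle\grad f,\Iop_\rho v\rangle_{L^2(\rho)}$. Its Riesz representative is $\Bop^{\Wass}_\rho f=\grad f$ in $L^2(\rho)$ and $\Bop^{\Stein}_\rho f=\Iop_\rho^{*}\grad f$ in $\Hspace_k$. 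It is orthogonal to the kernel because the functional annihilates $\rho$-divergence-free fields. Hence $\Kop^\bullet_\rho=\Aop_\rho\Bop^\bullet_\rho$. The inclusion $\Hspace_k\hookrightarrow C^1$ makes $\Iop_\rho$ bounded and lets $\Aop_\rho$ act on $\Iop_\rho\Iop_\rho^*\grad f$.

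For (iii), decompose $L^2(\rho;TM)=\overline{\grad C^\infty}\oplus\Ker\Aop_\rho$ (a Helmholtz–Hodge decomposition weighted by $\rho$). Write $\Iop_\rho\Iop_\rho^*\grad f=\Mop_\rho\grad f+w$ with $w$ vertical, so $\Aop_\rho w=0$. For this the vertical component must be regular enough that $\Aop_\rho$ applied to it vanishes in the distributional sense, and weak divergence-freeness suffices. Positivity and self-adjointness of $\Mop_\rho$ follow from those of $\Iop_\rho\Iop_\rho^*$ compressed by a projection.

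The two equivalences in (iii) then follow. For tangents, $\Aop_\rho$ is injective on the horizontal space, so the tangents agree iff $\Mop_\rho\grad f=\grad f$. For particle fields, equality of the selected fields is literally $\Iop_\rho\Iop_\rho^*\grad f=\grad f$.

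For (iv), positive proportionality $\Mop_\rho\grad f_\rho=c(\rho)\grad f_\rho$ along a curve is handled by reparametrising time by $\int c$, which gives the same unparametrised law curve. Whether the ODE/PDE is well posed must be assumed, in line with Remark \ref{scope-rem}. For an observation $\Phi$, the chain rule $\dd\Phi(\rho)[\Aop_\rho\cdot]$ compares the two tangents only through $\dd\Phi$. Spectral bounds $a\le\Mop_\rho\le b$ give $a\|\grad f\|^2\le\langle\Mop_\rho\grad f,\grad f\rangle\le b\|\grad f\|^2$, which compares dissipation and tangent cost. Agreement on a closed subspace is equivalent to $\Mop_\rho=\id$ there, by taking all $f$ with gradients dense in that subspace and using boundedness of $\Mop_\rho$.

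For the finite-rank design claim, take the target $S$ positive definite on $V=\operatorname{span}\{\grad f_i\}$ with $L^2(\rho)$-orthonormal basis $e_i$. Set $k(x,y)=\sum_{ij}G_{ij}e_i(x)\otimes e_j(y)$. Then $\Iop_\rho\Iop_\rho^*$ is the integral operator with kernel $k\rho$, which equals $\sum G_{ij}e_i\langle e_j,\cdot\rangle$. Choosing $G$ to be the matrix of $S$ gives the response. Since $V$ is horizontal, the projection does not alter it.

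For (v), the first variation of $\Ent(\rho|\pi)$ is $\log(\rho/\pi)$ up to constants. Apply (iii) with $f=\log(\rho/\pi)$. The dissipations $\|\grad f\|^2$ and $\langle\Mop_\rho\grad f,\grad f\rangle$ can coincide without $\Mop_\rho\grad f=\grad f$: any positive $\Mop_\rho$ whose quadratic form is $1$ in that direction while the operator rotates it gives a counterexample, which can be built with the finite-rank kernel above.

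For (vi), take $\rho_\theta=\exp(\langle\theta,T\rangle-\psi)$ on $M$. A covector in expectation coordinates, pulled back, is $f=\langle\xi,T\rangle$. The requirement is a field $v$ with $\Aop_{\rho_\theta}v=\partial_\eta\rho_\theta$ in the direction prescribed by the Fisher mirror flow. That direction is $\langle\dot\theta,T-\E T\rangle\rho_\theta$, smooth and mean zero. The surjectivity argument of (i) therefore gives a smooth gradient solution, and minimality of the family guarantees that the directions are independent. Kernel membership is imposed as a hypothesis. Under it, the Stein representative equals that gradient up to vertical fields, so the law tangent matches the Fisher mirror evolution.

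I expect the main obstacle to be (iii), specifically the regularity of the horizontal–vertical splitting of $\Iop_\rho\Iop_\rho^*\grad f$. I would handle this through the $L^2$ decomposition together with elliptic regularity of $\Lop_\rho$. That regularity shows the horizontal part of a $C^1$ field is $\grad u$ with $\Lop_\rho u=\Aop_\rho(\cdot)$ smooth enough.
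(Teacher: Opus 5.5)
Items (i)--(v) follow the paper's route closely. The shared steps are weak differentiation of the pushforward, the weighted Poisson equation for surjectivity (your Fredholm argument replaces Lax--Milgram to the same effect), Riesz selection in $L^2(\rho;TM)$ and in $\Hspace_k$, and the observation that $\mathsf{S}_{\rho}\grad f-\Mop_{\rho}\grad f\in(H^{\Wass}_{\rho})^{\perp}=\Ker\Aop_{\rho}$. The regularity obstacle you anticipate in (iii) does not arise. $\Ker\Aop_{\rho}$ is the distributional kernel in $L^2(\rho;TM)$, which is by definition the orthogonal complement of the gradients, so no elliptic regularity of the splitting is needed. Your finite-rank kernel is the paper's: its Hilbert space is $E$ with inner product $\langle R^{-1}\cdot,\cdot\rangle_{L^2(\rho)}$. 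The counterexample you build from it in (v) is more explicit than the paper, which only asserts the non-implication. For instance, take an $L^2(\rho)$-orthonormal pair of smooth gradients $e=u_{\rho}/\|u_{\rho}\|$ and $w$, set $Re=e+\varepsilon w$ and $Rw=w+\varepsilon e$ with $0<\varepsilon<1$; then $\langle u_{\rho},Ru_{\rho}\rangle=\|u_{\rho}\|^2$ but $Ru_{\rho}\neq u_{\rho}$.

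Two omissions are worth repairing.
\begin{itemize}
\item In (ii) you never form the cotangent Hilbert spaces (quotient by $\Ker\Bop^{\bullet}_{\rho}$ and completion). Nor do you show that $\Kop^{\Stein}_{\rho}$ is onto the full quotient tangent. That needs the density argument of Theorem \ref{stein-musical-thm}: if $w\in H^{\Stein}_{\rho}$ is orthogonal to $\Iop_{\rho}^{*}\grad C^{\infty}(M)$, then $\int_M\rho\langle w,\grad f\rangle_g\,\dd{\mathrm{vol}}=0$ for all $f$, so $w$ lies in $N^{\Stein}_{\rho}$ and vanishes.
\item In (iv) your quadratic-form bound compares only dissipation. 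The tangent-cost comparison needs $\Mop_{\rho}$ invertible on a closed invariant subspace. The restricted Stein tangent metric is then $\langle\Mop_{\rho}^{-1}u,u\rangle_{L^2(\rho)}$, with spectrum in $[C^{-1},c^{-1}]$ (Proposition \ref{metric-comparison-prop}).
\end{itemize}

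The genuine gap is the last step of (vi). Solving $-\Div(q\grad u)=q\langle\dot\eta,T-m\rangle$ is fine. By $-\Div(qv)=-q\Tspace_qv$ it is exactly the Stein equation $\Tspace_q\grad u=-\langle\dot\eta,T-m\rangle$, and you should say so, since the claim concerns Stein equations.

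The problem is the sentence ``the Stein representative equals that gradient up to vertical fields''. It is false if it refers to $\Bop^{\Stein}_{q}f=\Iop_{q}^{*}\grad\langle\beta,T\rangle$, as your introduction of $f=\langle\beta,T\rangle$ suggests. Already in Otto geometry the descent of $\langle\beta,T\rangle$ has tangent $\Div(q\grad\langle\beta,T\rangle)=q\Lop_q\langle\beta,T\rangle$. This equals the Fisher mirror tangent $-q\langle\beta,T-m\rangle$ only when $\langle\beta,T-m\rangle$ is a $-1$ eigenfunction of $\Lop_q$. For $q_{\eta}\propto e^{\eta\cos x}$ on the circle one has $\Lop_q\cos x=-\cos x+\eta\sin^2x$, so for $\eta\neq0$ this tangent even leaves the family. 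Membership $\grad u\in\Hspace_k$ says nothing about $\Iop_q^{*}\grad\langle\beta,T\rangle$.

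What the statement asserts, and Theorem \ref{exponential-stein-covector-thm} proves, concerns the directly prescribed field $v_{\beta}=-\sum_j\beta_j\psi_j$. This field is linear in $\beta$ through precomputed Stein solutions. It gives $\partial_tq=-q\beta^{\mathsf T}(T-m)$, hence $\dot\eta=-\beta$ by minimality and $\dot m=-C\beta$. Kernel membership merely makes $v_{\beta}$ an admissible kernel velocity.

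If you instead meant the minimum-$\Hspace_k$ lift of the prescribed tangent, the sentence is true but tautological. In either reading the geometry lies in the chosen right inverse $\beta\mapsto v_{\beta}$ (Proposition \ref{finite-covector-cometric-prop}). It does not lie in a Stein gradient flow of an extended density covector.
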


\subsection{Acknowledgements}

The author is grateful to K D Elworthy and J Glazebrook, as well as D Sullivan, for many useful discussions on global analysis in probability, and the fundamental notions of transport and Markov processes respectively. Also thanked warmly is M Sharp for motivating the development of The Machinery for parabolicity on Fr\'echet manifolds present in part here. The author acknowledges the Einstein Chair programme at the Graduate Centre of the City University of New York for support. 

\section{The differentiable manifold of positive probability densities}\label{density-manifold-sec}

\subsection{Smooth densities and their ambient affine space}

Let $M$ be a connected, closed, oriented smooth manifold of dimension $d$, furnished with a Riemannian metric $g$ and Riemannian volume measure $\mathrm{vol}$. Compactness will remain in force throughout the geometric construction; Euclidean and infinite dimensional formulae will be stated separately under their required decay and integrability hypotheses. Let $C^{\infty}(M)$ carry its usual Fr\'echet topology, generated by the seminorms $\|f\|_{C^r}$ for $r \in \N$. Set
\[
	C^{\infty}_0(M) = \left\{ \sigma \in C^{\infty}(M) : \int_M \sigma \, \dd{\mathrm{vol}} = 0 \right\}.
\]
The affine hyperplane of smooth functions with integral one is
\[
	\mathcal A^{\infty}_1(M) = \left\{ \rho \in C^{\infty}(M) : \int_M \rho \, \dd{\mathrm{vol}} = 1 \right\}.
\]
Its translation space is $C^{\infty}_0(M)$. We shall identify a density function $\rho$ with the measure $\rho \, \mathrm{vol}$ whenever no ambiguity can arise.

Let
\[
	\Prob^{\infty}_+(M) = \left\{ \rho \in \mathcal A^{\infty}_1(M) : \rho(x) > 0 \text{ for every } x \in M \right\}.
\]
Compactness makes strict positivity an open condition in the $C^0$ topology and hence in the Fr\'echet topology. One therefore obtains the following elementary manifold statement, which is the smooth carrier used by Lott \cite{lott2008geometric}.

\begin{prop}\label{smooth-density-manifold-prop}
	The set $\Prob^{\infty}_+(M)$ is an open Fr\'echet submanifold of the affine hyperplane $\mathcal A^{\infty}_1(M)$, modelled on $C^{\infty}_0(M)$. Its tangent space at every $\rho \in \Prob^{\infty}_+(M)$ is
	\[
		T_{\rho}\Prob^{\infty}_+(M) = C^{\infty}_0(M).
	\]
\end{prop}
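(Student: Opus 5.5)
The plan is to realise $\Prob^{\infty}_+(M)$ as an open subset of the affine Fréchet space $\mathcal A^{\infty}_1(M)$ and then read off the manifold structure from a single global chart.

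First I will check that $C^{\infty}_0(M)$ is a closed linear subspace of the Fréchet space $C^{\infty}(M)$, hence itself Fréchet. The functional $\sigma\mapsto\int_M\sigma\,\dd{\mathrm{vol}}$ is continuous, since $\mathrm{vol}(M)<\infty$ by compactness gives $\bigl|\int_M\sigma\,\dd{\mathrm{vol}}\bigr|\le\mathrm{vol}(M)\|\sigma\|_{C^0}$. Its kernel $C^{\infty}_0(M)$ is therefore closed. Fixing the constant density $\rho_0=1/\mathrm{vol}(M)$, which lies in $\mathcal A^{\infty}_1(M)$, the map
\[
\Phi:\mathcal A^{\infty}_1(M)\to C^{\infty}_0(M),\qquad \rho\mapsto\rho-\rho_0,
\]
is an affine bijection. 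It is a homeomorphism for the subspace topology, being the restriction of a translation of $C^{\infty}(M)$. This endows $\mathcal A^{\infty}_1(M)$ with a smooth affine Fréchet structure modelled on $C^{\infty}_0(M)$; any two such base-point choices differ by a translation, so all transition maps are smooth.

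Next I will show openness. Since $M$ is compact, any $\rho\in\Prob^{\infty}_+(M)$ attains a minimum $m=\min_M\rho>0$. If $\|\tilde\rho-\rho\|_{C^0}<m$, then $\tilde\rho>0$ everywhere. The $C^0$ seminorm is among the generating seminorms of the Fréchet topology, so this $C^0$ ball meets $\mathcal A^{\infty}_1(M)$ in a relatively open neighbourhood contained in $\Prob^{\infty}_+(M)$. An open subset of a Fréchet manifold is an open submanifold; the restriction of $\Phi$ supplies the chart, with image an open subset of $C^{\infty}_0(M)$.

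Finally, for the tangent space, I will note that in the single affine chart a smooth curve $c:(-\varepsilon,\varepsilon)\to\Prob^{\infty}_+(M)$ with $c(0)=\rho$ has velocity $\dot c(0)\in C^{\infty}_0(M)$. This holds because differentiating $\int_M c(t)\,\dd{\mathrm{vol}}=1$ (which is legitimate, as integration is continuous linear) gives $\int_M\dot c(0)\,\dd{\mathrm{vol}}=0$. Conversely, every $\sigma\in C^{\infty}_0(M)$ is realised by $c(t)=\rho+t\sigma$, which stays in $\Prob^{\infty}_+(M)$ for $|t|<m/(\|\sigma\|_{C^0}+1)$. Hence $T_{\rho}\Prob^{\infty}_+(M)=C^{\infty}_0(M)$, canonically identified via the affine structure.

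None of this is a real obstacle. The only point needing care is the convention that tangent vectors on a Fréchet manifold are defined through charts (or curves, in the convenient calculus sense of Kriegl–Michor), and that for an open subset of an affine space these definitions collapse to the translation space. I will state this identification explicitly rather than derive it.
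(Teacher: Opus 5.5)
Your proposal is correct and takes essentially the same route as the paper. Both arguments model $\mathcal A^{\infty}_1(M)$ on the closed kernel $C^{\infty}_0(M)$ of the integration functional (the paper phrases this as complementation by the constants, you use the translation chart $\rho\mapsto\rho-\rho_0$), obtain openness from a $C^0$ ball whose radius is controlled by $\min_M\rho$, and identify the tangent space with the translation space, your explicit curve check $c(t)=\rho+t\sigma$ being a harmless elaboration of the paper's final sentence.
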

\begin{proof}
	The continuous linear functional $I \colon C^{\infty}(M) \to \R$ defined by $I(f) = \int_M f \, \dd{\mathrm{vol}}$ is surjective. Its kernel $C^{\infty}_0(M)$ is complemented by the constant functions, whence $\mathcal A^{\infty}_1(M)$ is a split affine Fr\'echet submanifold. Every $\rho \in \Prob^{\infty}_+(M)$ has a positive minimum $m_{\rho}$. The $C^0$ ball of radius $m_{\rho}/2$ about $\rho$ lies in the positive cone; consequently, $\Prob^{\infty}_+(M)$ is open in $\mathcal A^{\infty}_1(M)$. The tangent space of an open subset of an affine Fr\'echet space is its translation space, which gives the assertion.
\end{proof}

The chart $\chi_{\rho_0}(\rho) = \rho - \rho_0$ is sufficient for the differential topology used below. Newton's mixture, exponential and balanced charts give stronger statistical structures, especially when densities are defined relative to a general reference measure and when logarithmic integrability must be controlled \cite{newton2012hilbert,newton2016balanced,newton2018differentiable,newton2019sobolev}. The affine chart is retained here because the transport operator differentiates densities as signed measures, whilst logarithmic charts will reappear when relative entropy and exponential families are discussed.

\subsection{The full and regular cotangent spaces}

The tangent identification in Proposition \ref{smooth-density-manifold-prop} is elementary; its continuous dual requires more care. Let $\mathcal D'(M)$ denote the distributions on $M$. The continuous dual of $C^{\infty}(M)$ is $\mathcal D'(M)$, and the annihilator of $C^{\infty}_0(M)$ consists precisely of the constant multiples of the distribution $f \mapsto \int_M f \, \dd{\mathrm{vol}}$.

\begin{prop}\label{full-cotangent-prop}
	The continuous Fr\'echet cotangent at $\rho \in \Prob^{\infty}_+(M)$ is canonically
	\[
		T_{\rho}^{*}\Prob^{\infty}_+(M) \cong \mathcal D'(M) / \R.
	\]
	A smooth function $f$ determines the regular covector $[f]$ through
	\[
		\langle [f], \sigma \rangle = \int_M f \sigma \, \dd{\mathrm{vol}},
	\]
	and two smooth functions determine the same regular covector exactly when their difference is constant.
\end{prop}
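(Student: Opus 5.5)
The plan is to combine Proposition \ref{smooth-density-manifold-prop} with standard duality for closed subspaces of Fréchet spaces.

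\emph{Reduction to a quotient.} By Proposition \ref{smooth-density-manifold-prop}, $T_{\rho}\Prob^{\infty}_+(M)=C^{\infty}_0(M)$ for every $\rho$, and this identification does not depend on $\rho$. The continuous cotangent is therefore the continuous dual $(C^{\infty}_0(M))'$, and it suffices to identify this dual. The restriction map $r\colon \mathcal D'(M)=(C^{\infty}(M))'\to (C^{\infty}_0(M))'$ is linear, and its kernel is the annihilator of $C^{\infty}_0(M)$. The paper has already recorded that this annihilator is $\R\cdot I$, with $I(f)=\int_M f\,\dd{\mathrm{vol}}$. To justify this quickly, suppose $T$ vanishes on $\ker I$. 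Then $T$ factors through $C^{\infty}(M)/\ker I\cong\R$, so $T=T(1)\,I$, using $I(1)=\mathrm{vol}(M)>0$.

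\emph{Surjectivity of $r$.} This is the only step with real content, and here it is elementary because the subspace is complemented. The projection $P f=f-\mathrm{vol}(M)^{-1}I(f)$ is a continuous linear projection of $C^{\infty}(M)$ onto $C^{\infty}_0(M)$. Given $\ell\in(C^{\infty}_0(M))'$, the functional $\ell\circ P$ lies in $\mathcal D'(M)$ and restricts to $\ell$. So no appeal to Hahn--Banach in locally convex spaces is needed. Hence $r$ induces a linear bijection $\mathcal D'(M)/\R\to T^{*}_{\rho}\Prob^{\infty}_+(M)$. Here $\R$ denotes the line $\R I$, which is identified with the constant distributions via the volume form. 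The map is canonical because neither $r$ nor its kernel involves $\rho$ or any choice. The section $\ell\mapsto\ell\circ P$ depends on the complement, but the quotient does not. If a topological statement is wanted, I will also note that $P$ gives a topological splitting. Then the quotient topology on $\mathcal D'(M)/\R$, with either the strong or the weak-$*$ topology, matches the corresponding dual topology on $(C^{\infty}_0(M))'$.

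\emph{Regular covectors.} A smooth $f$ defines the distribution $\sigma\mapsto\int_M f\sigma\,\dd{\mathrm{vol}}$, and its class in the quotient is $[f]$, which gives the stated pairing on $C^{\infty}_0(M)$. Adding a constant $c$ to $f$ changes the distribution by $cI$, so the class is unchanged. Conversely, suppose $[f]=[h]$. Then $f-h$ annihilates $C^{\infty}_0(M)$, so by the kernel computation $\int (f-h)\sigma\,\dd{\mathrm{vol}}=c\int\sigma\,\dd{\mathrm{vol}}$ for all smooth $\sigma$. Smooth functions are dense and $M$ is connected with full-support volume, so the fundamental lemma of the calculus of variations gives $f-h=c$. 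Equivalently, one can test directly with $\sigma=f-h-\overline{(f-h)}\in C^{\infty}_0(M)$, which yields $\|f-h-\overline{(f-h)}\|_{L^2}^2=0$.

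I do not expect a serious obstacle. The only point needing care is to state the meaning of ``canonical''. This is settled by the independence of $r$ and its kernel from $\rho$ and from the choice of complement.
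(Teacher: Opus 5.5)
Your argument is correct and is essentially the paper's: both identify the cotangent with $(C^{\infty}_0(M))'$, show that restriction $\mathcal D'(M)\to(C^{\infty}_0(M))'$ is onto with kernel $\R\cdot I$ via the mean-zero decomposition, and then read off the regular covectors. The only real difference is that you obtain surjectivity by composing with the continuous projection $P$ (the splitting already used in Proposition \ref{smooth-density-manifold-prop}) rather than invoking Hahn--Banach, which is more elementary and also yields the topological identification; one small slip is that factoring through $C^{\infty}(M)/\ker I\cong\R$ gives $T=\mathrm{vol}(M)^{-1}T(1)\,I$, not $T=T(1)\,I$, though this does not affect the conclusion $T\in\R I$.
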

\begin{proof}
	Since $C^{\infty}_0(M)$ is a closed subspace of $C^{\infty}(M)$, the restriction map $\mathcal D'(M) \to (C^{\infty}_0(M))'$ is surjective by the Hahn--Banach theorem applied to locally convex spaces. Its kernel is the annihilator of $C^{\infty}_0(M)$. Let $T$ belong to this annihilator. Every $f \in C^{\infty}(M)$ decomposes as
	\[
		f = \left(f - \frac{1}{\mathrm{vol}(M)}\int_M f \, \dd{\mathrm{vol}}\right) + \frac{1}{\mathrm{vol}(M)}\int_M f \, \dd{\mathrm{vol}}.
	\]
	The first summand has integral zero; hence $T(f)$ is a constant multiple of $\int_M f \, \dd{\mathrm{vol}}$. The kernel is therefore one dimensional and is denoted by $\R$. The assertion concerning regular covectors follows by applying the same argument to distributions induced by smooth functions.
\end{proof}

\begin{remark}\label{regular-cotangent-rem}
	The quotient $C^{\infty}(M)/\R$ is a distinguished regular subspace of the cotangent. The complete Fr\'echet cotangent is the larger distributional quotient in Proposition \ref{full-cotangent-prop}. Variational derivatives such as $\log(\rho/\pi)$ usually lie in the regular subspace. Weak transport metrics complete another regular subspace in norms which depend on $\rho$; the Wasserstein and Stein cotangents constructed later are Hilbert cotangent models attached to the Fr\'echet manifold and occupy a different analytic role from its complete continuous dual.
\end{remark}

The probability constraint explains the quotient by constants. A functional derivative $f$ may be changed by any $\rho$ dependent scalar without changing its evaluation on a zero mass tangent. This elementary gauge freedom persists in the Lions derivative, exponential coordinates and Fisher--Rao reaction terms.

\subsection{Sobolev and statistical manifold carriers}

A Hilbert manifold carrier can be obtained by fixing $s > d/2$ and taking
\[
	\Prob^s_+(M) = \left\{ \rho \in H^s(M) : \rho > 0, \int_M \rho \, \dd{\mathrm{vol}} = 1 \right\}.
\]
Sobolev embedding makes positivity open; thus $\Prob^s_+(M)$ is a Hilbert manifold modelled on $H^s_0(M)$. Its Hilbert dual is $H^{-s}(M)/\R$. Higher values of $s$ permit multiplication and differential operations, although a fixed Sobolev index does not remove every derivative loss arising from the diffeomorphism action. The Fr\'echet scale, a tame category, or an inverse limit of Sobolev manifolds is consequently better adapted to the full pushforward action \cite{ebin1970groups,khesin2013geometry}.

Newton's constructions answer a complementary question. They begin with classes of measures equivalent to a reference measure and choose balanced, logarithmic, differentiable or Sobolev charts so that statistical divergences, Fisher information and Amari's covariant derivatives become smooth \cite{newton2012hilbert,newton2016balanced,newton2018differentiable,newton2019sobolev}. Pistone--Sempi use Orlicz spaces to obtain an exponential manifold containing all measures connected by exponential arcs \cite{pistone1995infinite}; Gibilisco-- Pistone develop the corresponding nonparametric connections \cite{gibilisco1998connections}. These carriers rigorously organise the information geometry of densities. Otto and Stein geometry require additional spatial data on the sample space, namely a gradient, a divergence and a velocity norm. The construction below may be placed over any density manifold for which these operations are defined and smooth on a common core.

\begin{remark}\label{carrier-choice-rem}
	No single topology is forced by all the structures. Statistical charts control logarithms and moments, the diffeomorphism quotient controls spatial transport, and weak tangent norms control dissipative evolution. The present paper uses $\Prob^{\infty}_+(M)$ as the common differentiable carrier and constructs Hilbert tangent and cotangent completions fibrewise. Newton's spaces supply alternatives when the reference measure, tail behaviour or filtering problem determines a more suitable ambient topology.
\end{remark}

\section{The diffeomorphism quotient and the continuity operator}\label{continuity-operator-sec}

\subsection{Pushforwards and their differential}

Let $\Diff(M)$ denote the Fr\'echet Lie group of smooth orientation preserving diffeomorphisms of $M$. It acts on positive densities by the pushforward. Fix $\rho \in \Prob^{\infty}_+(M)$ and define
\[
	\Pi_{\rho} \colon \Diff(M) \to \Prob^{\infty}_+(M)
\]
by
\[
	\Pi_{\rho}(\varphi) = \varphi_{\#}(\rho \, \mathrm{vol}).
\]
The Lie algebra of $\Diff(M)$ is $\Vect(M)$, the smooth vector fields on $M$. A smooth curve $\varphi_t$ has Eulerian velocity $v_t = \dot{\varphi}_t \circ \varphi_t^{-1}$.

\begin{lemma}\label{pushforward-differential-lem}
	Let $\rho_t = (\varphi_t)_{\#}(\rho_0 \, \mathrm{vol})$ and let $v_t$ be the Eulerian velocity of $\varphi_t$. Then
	\[
		\partial_t \rho_t + \Div(\rho_t v_t) = 0.
	\]
	Consequently, the differential of the orbit map at the identity is
	\[
		T_{\id}\Pi_{\rho}(v) = \Aop_{\rho}v = - \Div(\rho v).
	\]
\end{lemma}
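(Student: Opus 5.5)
The plan is to prove the continuity equation in weak form, by testing against an arbitrary smooth function, and then to read off the differential at the identity as a special case.

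Fix $h \in C^{\infty}(M)$. By the definition of the pushforward,
\[
	\int_M h \, \rho_t \, \dd{\mathrm{vol}} = \int_M (h \circ \varphi_t) \, \rho_0 \, \dd{\mathrm{vol}}.
\]
Since $\varphi_t$ is a smooth curve in $\Diff(M)$ and $M$ is compact, the integrand on the right is jointly smooth in $(t,x)$. We may therefore differentiate under the integral sign. The chain rule gives
\[
	\partial_t (h \circ \varphi_t) = \dd h(\dot\varphi_t) = \langle \grad h, v_t \rangle \circ \varphi_t,
\]
where we have used $\dot\varphi_t = v_t \circ \varphi_t$. Pushing forward once more converts the right-hand side into $\int_M \langle \grad h, v_t\rangle \rho_t \, \dd{\mathrm{vol}}$.

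Next, the divergence theorem on the closed manifold $M$ (there are no boundary terms) rewrites this integral as $-\int_M h \, \Div(\rho_t v_t) \, \dd{\mathrm{vol}}$. Hence
\[
	\int_M h \, \bigl(\partial_t \rho_t + \Div(\rho_t v_t)\bigr) \, \dd{\mathrm{vol}} = 0
\]
for every $h$. To pass to the pointwise statement, we need $\partial_t \rho_t$ to exist as a smooth function, so that the time derivative may be moved inside the left integral.

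For this I will use the explicit formula $\rho_t = (\rho_0 / J\varphi_t) \circ \varphi_t^{-1}$, where $J\varphi_t$ is the Jacobian determinant of $\varphi_t$ relative to $\mathrm{vol}$. Because $\varphi_t \mapsto \varphi_t^{-1}$ is smooth in the Fr\'echet Lie group, this formula shows that $t \mapsto \rho_t$ is a smooth curve in $C^{\infty}(M)$. Moreover, $\rho_t$ lies in $\Prob^{\infty}_+(M)$ by Proposition \ref{smooth-density-manifold-prop}, since $\rho_0 > 0$ and $J\varphi_t > 0$. The fundamental lemma of the calculus of variations then yields the pointwise identity $\partial_t \rho_t + \Div(\rho_t v_t) = 0$.

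For the second claim, take $\varphi_t$ to be the flow of a fixed $v \in \Vect(M)$. Then $\varphi_0 = \id$ and the Eulerian velocity at $t=0$ is $v$. Evaluating the continuity equation at $t = 0$ gives
\[
	T_{\id}\Pi_{\rho}(v) = \tfrac{d}{dt}\big|_{0}\Pi_{\rho}(\varphi_t) = -\Div(\rho v).
\]
This value lies in $C^{\infty}_0(M)$, the tangent space of Proposition \ref{smooth-density-manifold-prop}, by the divergence theorem.

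There is one subtlety: every smooth curve through $\id$ with initial velocity $v$ must give the same value, so that the differential is well defined. I will argue that $\Pi_{\rho}$ is a smooth map of Fr\'echet manifolds. It is a composition of inversion, composition and the Jacobian, all of which are smooth on $\Diff(M)$. Its tangent map therefore depends only on the initial velocity, and the flow computation identifies it.

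The main obstacle is this regularity bookkeeping in the Fr\'echet category. Concretely, it consists of differentiability of $t \mapsto \rho_t$ in $C^{\infty}(M)$ and the justification for exchanging derivative and integral. My plan is to reduce both to the smoothness of composition, inversion and $J$ on $\Diff(M)$ as established in the cited Ebin--Marsden and Hamilton framework, and to treat everything else as the routine computation above.
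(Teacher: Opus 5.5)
Your proposal is correct and follows the paper's proof. Both test against a smooth function, differentiate the pushforward identity, change variables, integrate by parts on the closed manifold, and evaluate at $t=0$. Your explicit formula $\rho_t=(\rho_0/J\varphi_t)\circ\varphi_t^{-1}$ simply makes precise the paper's remark that ``all quantities are smooth''. Your well-definedness subtlety is already settled by the continuity equation itself: it holds for every smooth curve with $\varphi_0=\id$, so it gives $-\Div(\rho\,\dot\varphi_0)$ at $t=0$ without appealing to smoothness of $\Pi_{\rho}$.
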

\begin{proof}
	Let $f \in C^{\infty}(M)$. The definition of the pushforward gives
	\[
		\int_M f(x) \rho_t(x) \, \dd{\mathrm{vol}}(x) = \int_M f(\varphi_t(x)) \rho_0(x) \, \dd{\mathrm{vol}}(x).
	\]
	Differentiating and changing variables yields
	\[
		\dv{}{t}\int_M f \rho_t \, \dd{\mathrm{vol}} = \int_M \langle \grad f, v_t \rangle_g \rho_t \, \dd{\mathrm{vol}}.
	\]
	The manifold has no boundary; integration by parts therefore gives
	\[
		\dv{}{t}\int_M f \rho_t \, \dd{\mathrm{vol}} = - \int_M f \Div(\rho_t v_t) \, \dd{\mathrm{vol}}.
	\]
	The weak identity holds for every smooth $f$ and all quantities are smooth, which proves the continuity equation. Evaluation at $t=0$ proves the formula for the differential.
\end{proof}

The operator $\Aop_{\rho}$ will be called the continuity operator. Its domain and norm will vary, but its distributional formula remains fixed. The sign convention has been chosen so that a velocity $v$ in the continuity equation represents the tangent $\dot{\rho} = \Aop_{\rho}v$.

\subsection{Kernel, range and the quotient of velocities}

The kernel is
\[
	\Ker \Aop_{\rho} = \left\{ v \in \Vect(M) : \Div(\rho v) = 0 \right\}.
\]
These fields generate infinitesimal diffeomorphisms preserving the measure $\rho \, \mathrm{vol}$. They are vertical directions of the orbit map. Surjectivity on smooth density tangents follows from a weighted elliptic equation.

\begin{lemma}\label{weighted-poisson-lem}
	Let $\rho \in \Prob^{\infty}_+(M)$ and $\sigma \in C^{\infty}_0(M)$. There exists a smooth function $\phi$, unique modulo constants, such that
	\[
		- \Div(\rho \grad \phi) = \sigma.
	\]
\end{lemma}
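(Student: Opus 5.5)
The plan is to treat $L_{\rho}\phi = -\Div(\rho\grad\phi)$ as a uniformly elliptic, formally self-adjoint operator on $C^{\infty}(M)$ and solve it by the Lax--Milgram (or Riesz) method on $H^1(M)/\R$, then bootstrap regularity.

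\textbf{Setting up the weak problem.} Compactness of $M$ and positivity and smoothness of $\rho$ give constants $0<m_{\rho}\le \rho\le m_{\rho}'$. On the Hilbert space
\[
	\dot H^1(M)=\Bigl\{\phi\in H^1(M):\int_M\phi\,\dd{\mathrm{vol}}=0\Bigr\},
\]
I consider the bilinear form $a(\phi,\psi)=\int_M\rho\langle\grad\phi,\grad\psi\rangle_g\,\dd{\mathrm{vol}}$. It is bounded by $m_{\rho}'$. For coercivity I invoke the Poincar\'e inequality on the connected closed manifold $M$,
\[
	\|\phi\|_{L^2}\le C_P\|\grad\phi\|_{L^2}\qquad\text{for }\phi\in\dot H^1(M),
\]
which gives $a(\phi,\phi)\ge m_{\rho}(1+C_P^2)^{-1}\|\phi\|_{H^1}^2$. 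The functional $\psi\mapsto\int_M\sigma\psi\,\dd{\mathrm{vol}}$ is bounded on $\dot H^1$. Lax--Milgram therefore yields $\phi\in\dot H^1$ with $a(\phi,\psi)=\int\sigma\psi$ for all $\psi\in\dot H^1$.

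Since $\int_M\sigma\,\dd{\mathrm{vol}}=0$ and $a(\phi,1)=0$, the identity extends to every $\psi\in H^1(M)$. This says precisely that $-\Div(\rho\grad\phi)=\sigma$ in the distributional sense; this is the only place the zero-mass hypothesis on $\sigma$ enters.

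\textbf{Regularity.} This is the step requiring the most care, though it is standard. The operator $L_{\rho}$ has smooth coefficients and is elliptic of order two, with principal symbol $\rho|\xi|_g^2$. Elliptic regularity on closed manifolds, in the form of interior estimates in charts patched by a partition of unity, gives $\|\phi\|_{H^{s+2}}\le C(\|L_{\rho}\phi\|_{H^s}+\|\phi\|_{L^2})$ whenever the right side is finite. Because $\sigma\in C^{\infty}$, bootstrapping gives $\phi\in H^s$ for every $s$, and Sobolev embedding then gives $\phi\in C^{\infty}(M)$. Alternatively, one can expand $L_{\rho}\phi=\rho\Delta\phi+\langle\grad\rho,\grad\phi\rangle$ and apply the standard estimates for $\Delta$ directly.

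\textbf{Uniqueness modulo constants.} Suppose $\phi_1,\phi_2$ are smooth solutions. Their difference $\eta$ satisfies $\Div(\rho\grad\eta)=0$. Multiplying by $\eta$ and integrating by parts, which involves no boundary terms since $M$ is closed, gives
\[
	\int_M\rho|\grad\eta|^2\,\dd{\mathrm{vol}}=0.
\]
Positivity of $\rho$ forces $\grad\eta=0$, and connectedness of $M$ then makes $\eta$ constant. Conversely, adding a constant to a solution plainly produces another solution, so the solution is unique exactly up to constants.
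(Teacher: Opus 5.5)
Your proof is correct and is essentially the paper's argument. You use Lax--Milgram for the weighted Dirichlet form, made coercive by Poincar\'e's inequality; your mean-zero subspace is equivalent to the paper's quotient $H^1(M)/\R$, and the zero-mass hypothesis on $\sigma$ plays the same compatibility role. Smoothness then follows from elliptic regularity, and uniqueness modulo constants from the energy identity together with connectedness.
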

\begin{proof}
	Consider the bilinear form on $H^1(M)/\R$
	\[
		a_{\rho}([\phi],[\psi]) = \int_M \rho \langle \grad \phi, \grad \psi \rangle_g \, \dd{\mathrm{vol}}.
	\]
	Strict positivity and compactness give constants $0 < m_{\rho} \leqslant M_{\rho}$ such that $m_{\rho} \leqslant \rho \leqslant M_{\rho}$. Poincar\'e's inequality consequently makes $a_{\rho}$ coercive on the quotient. The functional $[\psi] \mapsto \int_M \sigma \psi \, \dd{\mathrm{vol}}$ is well defined because $\sigma$ has integral zero. Lax--Milgram supplies a weak solution in $H^1(M)/\R$. The operator $-\Div(\rho \grad)$ is uniformly elliptic with smooth coefficients; elliptic regularity implies $\phi \in C^{\infty}(M)$. The energy identity shows that a homogeneous solution has zero gradient and is therefore constant.
\end{proof}

\begin{prop}\label{continuity-quotient-prop}
	The continuity operator is surjective from $\Vect(M)$ onto $C^{\infty}_0(M)$, and it induces a vector space isomorphism
	\[
		\faktor{\Vect(M)}{\Ker \Aop_{\rho}} \cong T_{\rho}\Prob^{\infty}_+(M).
	\]
\end{prop}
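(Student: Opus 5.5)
The plan has three steps: show that $\Aop_{\rho}$ lands in $C^{\infty}_0(M)$, show that it is surjective by solving the weighted Poisson equation, and then read off the isomorphism from linear algebra.

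\emph{Image.} For any $v\in\Vect(M)$ the function $\Aop_{\rho}v=-\Div(\rho v)$ is smooth, since $\rho$ and $v$ are smooth. By the divergence theorem on the closed manifold $M$,
\[
	\int_M \Div(\rho v)\,\dd{\mathrm{vol}}=0,
\]
so $\Aop_{\rho}v\in C^{\infty}_0(M)$. By Proposition \ref{smooth-density-manifold-prop}, this space is $T_{\rho}\Prob^{\infty}_+(M)$. Alternatively, Lemma \ref{pushforward-differential-lem} identifies $\Aop_{\rho}$ with the differential of the orbit map, which takes values in the tangent space.

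\emph{Surjectivity.} Given $\sigma\in C^{\infty}_0(M)$, Lemma \ref{weighted-poisson-lem} provides a smooth $\phi$ with $-\Div(\rho\grad\phi)=\sigma$. Set $v=\grad\phi$. This is a smooth vector field because $\phi$ is smooth, and it satisfies $\Aop_{\rho}v=\sigma$. The only genuine analytic input is the elliptic regularity already contained in that lemma, which ensures the preimage lies in $\Vect(M)$ and not merely in an $L^2$ or $H^1$ space. So I expect no real obstacle here beyond citing it.

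\emph{Isomorphism.} The map $\Aop_{\rho}\colon\Vect(M)\to C^{\infty}_0(M)$ is linear and surjective. By the first isomorphism theorem for vector spaces, it descends to a linear bijection
\[
	\faktor{\Vect(M)}{\Ker\Aop_{\rho}}\to C^{\infty}_0(M),\qquad [v]\mapsto -\Div(\rho v),
\]
which is well defined because $\Ker\Aop_{\rho}$ is exactly the set of fields annihilated by $\Aop_{\rho}$.

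The gradient solution also supplies a canonical section $\sigma\mapsto\grad\phi_{\sigma}$ of this quotient. This foreshadows the Otto selection, though the proposition itself only asserts a vector space isomorphism. I would not claim a topological isomorphism of Fr\'echet spaces. If one wanted it, one would first check that $\Ker\Aop_{\rho}$ is closed, which holds since $\Aop_{\rho}$ is continuous, and then invoke the open mapping theorem for Fr\'echet spaces.
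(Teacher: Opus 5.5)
Your proposal is correct and follows the paper's own proof. Surjectivity comes from Lemma \ref{weighted-poisson-lem} with $v=\grad\phi$, and the isomorphism is the first isomorphism theorem; your divergence-theorem check that the image lies in $C^{\infty}_0(M)$ is a detail the paper leaves implicit.
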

\begin{proof}
	Lemma \ref{weighted-poisson-lem} represents every $\sigma$ as $\Aop_{\rho}(\grad \phi)$, proving surjectivity. The quotient statement is the first isomorphism theorem.
\end{proof}

Moser's theorem shows globally that $\Diff(M)$ acts transitively on smooth positive densities of fixed total mass \cite{moser1965volume}. If $\Diff_{\rho}(M)$ denotes the subgroup preserving $\rho \, \mathrm{vol}$, one therefore has the homogeneous space representation
\[
	\Prob^{\infty}_+(M) \cong \faktor{\Diff(M)}{\Diff_{\rho}(M)}.
\]
Proposition \ref{continuity-quotient-prop} is its infinitesimal form. The quotient already explains why particle velocities and density tangents cannot be identified canonically: a horizontal rule must choose one velocity from every coset.

\subsection{Regular covectors evaluated on transport tangents}

Let $[f]$ be a regular covector and let $\sigma = \Aop_{\rho}v$. Integration by parts gives
\begin{equation}\label{transport-pairing-eq}
	\langle [f], \sigma \rangle = \int_M f \sigma \, \dd{\mathrm{vol}} = \int_M \rho \langle \grad f, v \rangle_g \, \dd{\mathrm{vol}}.
\end{equation}
The right hand side depends only on the coset of $v$. Indeed, if $w \in \Ker \Aop_{\rho}$, then
\[
	\int_M \rho \langle \grad f,w \rangle_g \, \dd{\mathrm{vol}} = 0.
\]
Formula \eqref{transport-pairing-eq} is the common duality from which Otto and Stein musical maps will be obtained.

\begin{remark}\label{three-objects-rem}
	Formula \eqref{transport-pairing-eq} contains the middle levels of the hierarchy. The scalar $[f]$ is a density covector, $\grad f$ is its force representative on the sample space, $v$ is a particle velocity and $\sigma=\Aop_{\rho}v$ is a density tangent. A differentiable functional lies one level above $[f]$, whilst evaluation of $v$ on particles lies one level below it. The geometry appears between the force and the velocity; the continuity operator appears between the velocity and the law.
\end{remark}

\section{Gradient forces and Hilbert selections of particle velocity}\label{hilbert-transport-sec}

Fix a density $\rho$. A regular covector $[f]$ produces the force $\grad f$ through the transport pairing \eqref{transport-pairing-eq}. A transport geometry chooses a Hilbert class of admissible particle velocities and uses the Riesz theorem to convert this force into one selected velocity. The continuity operator then produces the corresponding law tangent. This construction is the common middle of the Otto and Stein hierarchies.

Let $V_{\rho}$ be a real Hilbert space continuously embedded in distributional vector fields on $M$, and suppose that $\Aop_{\rho}\colon V_{\rho}\to\mathcal D'(M)$ is continuous. Set $N_{\rho}=\Ker\Aop_{\rho}$ and $H_{\rho}=N_{\rho}^{\perp}$. Give $\Aop_{\rho}(V_{\rho})$ the quotient norm
\begin{equation}\label{quotient-tangent-norm-eq}
	\|\sigma\|_{T_{\rho,V}}=\inf\left\{\|v\|_{V_{\rho}}:v\in V_{\rho},\ \Aop_{\rho}v=\sigma\right\}.
\end{equation}
The kernel is closed, whence $V_{\rho}/N_{\rho}$ is Hilbert. The restriction $\Aop_{\rho}|_{H_{\rho}}$ is consequently an isometric isomorphism from $H_{\rho}$ onto $\Aop_{\rho}(V_{\rho})$ with the quotient norm. The range is already complete and will be denoted by $T_{\rho,V}$.

Let $\mathcal C_{\rho}$ be a linear class of regular covectors such that
\[
	v\longmapsto\int_M\rho\langle\grad f,v\rangle_g\,\dd{\mathrm{vol}}
\]
is continuous on $V_{\rho}$ for every $f\in\mathcal C_{\rho}$. The Riesz representation theorem supplies a unique $\Bop_{\rho}f\in V_{\rho}$ satisfying
\begin{equation}\label{velocity-riesz-eq}
	\langle\Bop_{\rho}f,v\rangle_{V_{\rho}}=\int_M\rho\langle\grad f,v\rangle_g\,\dd{\mathrm{vol}}.
\end{equation}
Every $\Bop_{\rho}f$ belongs to $H_{\rho}$ because the right hand side vanishes on $N_{\rho}$. Define
\begin{equation}\label{onsager-definition-eq}
	\Kop_{\rho}f=\Aop_{\rho}\Bop_{\rho}f.
\end{equation}

\begin{prop}\label{abstract-transport-prop}
	The form
	\[
		\langle f,h\rangle_{T^{*}_{\rho,V}}=\langle\Bop_{\rho}f,\Bop_{\rho}h\rangle_{V_{\rho}}
	\]
	is positive semidefinite on $\mathcal C_{\rho}$. Quotienting by $\Ker\Bop_{\rho}$ and completing gives a Hilbert cotangent $T^{*}_{\rho,V}$. The operator $\Kop_{\rho}$ extends uniquely to an isometric isomorphism
	\[
		\Kop_{\rho}\colon T^{*}_{\rho,V}\to\Aop_{\rho}\overline{\Bop_{\rho}(\mathcal C_{\rho})}^{V_{\rho}}\subseteq T_{\rho,V}.
	\]
	Regular covectors satisfy
	\begin{equation}\label{onsager-duality-eq}
		\langle f,\Kop_{\rho}h\rangle=\langle\Bop_{\rho}f,\Bop_{\rho}h\rangle_{V_{\rho}}.
	\end{equation}
	If the only $w\in H_{\rho}$ satisfying $\int_M\rho\langle\grad f,w\rangle_g\,\dd{\mathrm{vol}}=0$ for every $f\in\mathcal C_{\rho}$ is $w=0$, then $\Bop_{\rho}(\mathcal C_{\rho})$ is dense in $H_{\rho}$ and $\Kop_{\rho}$ is onto the full tangent $T_{\rho,V}$.
\end{prop}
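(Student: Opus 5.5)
The argument is Hilbert-space bookkeeping built on the isometry $\Aop_{\rho}|_{H_{\rho}}\colon H_{\rho}\to T_{\rho,V}$ established just before the statement.

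\emph{Positivity and the duality identity.} The form $\langle f,h\rangle_{T^{*}_{\rho,V}}=\langle\Bop_{\rho}f,\Bop_{\rho}h\rangle_{V_{\rho}}$ is the pullback of the inner product of $V_{\rho}$ along the linear map $\Bop_{\rho}$. Linearity of $\Bop_{\rho}$ follows from uniqueness in \eqref{velocity-riesz-eq}, since the right hand side is linear in $f$. The form is therefore symmetric and positive semidefinite, and its null space is exactly $\Ker\Bop_{\rho}$. For \eqref{onsager-duality-eq}, I will combine the transport pairing \eqref{transport-pairing-eq} with \eqref{velocity-riesz-eq} taking $v=\Bop_{\rho}h$:
\[
\langle f,\Kop_{\rho}h\rangle=\int_M f\,\Aop_{\rho}\Bop_{\rho}h\,\dd{\mathrm{vol}}=\int_M\rho\langle\grad f,\Bop_{\rho}h\rangle_g\,\dd{\mathrm{vol}}=\langle\Bop_{\rho}f,\Bop_{\rho}h\rangle_{V_{\rho}}.
\]
This is the step I expect to be the main obstacle. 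It requires justifying the integration by parts \eqref{transport-pairing-eq} when $\Bop_{\rho}h$ is only a distributional field in $V_{\rho}$. The first equality must be read as the distributional pairing of the smooth function $f$ with $\Aop_{\rho}\Bop_{\rho}h$. The second is then the definition of the distributional divergence, $\langle f,-\Div(\rho v)\rangle=\langle\rho\grad f,v\rangle$. Finally, the continuity hypothesis on $\mathcal C_{\rho}$ identifies this pairing with the integral expression, extended by continuity from smooth $v$, or else by density of smooth fields in $V_{\rho}$. I would state this interpretation explicitly.

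\emph{Completion and isometry.} Let $T^{*}_{\rho,V}$ be the completion of $\mathcal C_{\rho}/\Ker\Bop_{\rho}$. The map $[f]\mapsto\Bop_{\rho}f$ is by construction an isometry into $H_{\rho}$, using that $\Bop_{\rho}f\in H_{\rho}$ as noted above. It therefore extends uniquely to an isometric isomorphism of $T^{*}_{\rho,V}$ onto the closure $\overline{\Bop_{\rho}(\mathcal C_{\rho})}^{V_{\rho}}\subseteq H_{\rho}$; the image is closed because it is complete. Composing with the isometric isomorphism $\Aop_{\rho}|_{H_{\rho}}$ yields the isometric extension of $\Kop_{\rho}$ onto $\Aop_{\rho}\overline{\Bop_{\rho}(\mathcal C_{\rho})}$. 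Uniqueness holds because $\Kop_{\rho}$ is prescribed on a dense subspace and continuous.

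\emph{Density criterion.} Let $w\in H_{\rho}$ be orthogonal to $\Bop_{\rho}(\mathcal C_{\rho})$. By \eqref{velocity-riesz-eq}, this orthogonality is exactly the condition $\int_M\rho\langle\grad f,w\rangle_g\,\dd{\mathrm{vol}}=0$ for all $f\in\mathcal C_{\rho}$. The hypothesis then forces $w=0$, so the closure of $\Bop_{\rho}(\mathcal C_{\rho})$ is all of $H_{\rho}$. Since $\Aop_{\rho}(H_{\rho})=T_{\rho,V}$, the extended $\Kop_{\rho}$ is onto the full tangent.
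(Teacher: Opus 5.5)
Your proposal is correct and follows the paper's proof essentially step by step. You identify the completion with $\overline{\Bop_{\rho}(\mathcal C_{\rho})}^{V_{\rho}}$ and compose with the isometry $\Aop_{\rho}|_{H_{\rho}}$; you obtain the duality identity from the transport pairing together with the Riesz identity; and you get density from the vanishing of the orthogonal complement of $\Bop_{\rho}(\mathcal C_{\rho})$ in $H_{\rho}$. Your explicit distributional reading $\langle f,\Aop_{\rho}v\rangle=\langle\rho\grad f,v\rangle$ for non-smooth $v\in V_{\rho}$ is a useful clarification that the paper leaves implicit.
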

\begin{proof}
	The quotient $\mathcal C_{\rho}/\Ker\Bop_{\rho}$ is isometric to $\Bop_{\rho}(\mathcal C_{\rho})$, so its completion is $\overline{\Bop_{\rho}(\mathcal C_{\rho})}^{V_{\rho}}$. The restriction of $\Aop_{\rho}$ to $H_{\rho}$ is an isometry, whence $\Aop_{\rho}\Bop_{\rho}$ extends to the stated isometry. Formula \eqref{onsager-duality-eq} follows from \eqref{transport-pairing-eq} and \eqref{velocity-riesz-eq}. The orthogonal complement in $H_{\rho}$ of $\Bop_{\rho}(\mathcal C_{\rho})$ is zero under the final hypothesis, which proves density.
\end{proof}

\begin{remark}\label{raw-completed-rem}
	The isomorphism in Proposition \ref{abstract-transport-prop} is formed from the continuity pairing and the chosen Hilbert norm. A raw integral or differential operator may remain noninvertible on a preassigned Sobolev space. Compact kernel operators commonly fail to be surjective there; quotienting the nullspace and using the operator induced norms produces the Hilbert pair on which the musical relation is valid.
\end{remark}

\begin{remark}\label{fibrewise-status-rem}
	The construction is fibrewise. A smooth weak Riemannian or cometric structure on a manifold of densities additionally requires a common model for the fibres and smooth dependence of $N_{\rho}$, $\Bop_{\rho}$ and the induced norms on $\rho$. The Otto operator has this property on uniformly positive Sobolev charts under \S\ref{elliptic-appendix-sec}. Variation of the range and nullspace of a general kernel is a separate analytic problem.
\end{remark}

A differentiable functional $\mathcal F$ whose differential belongs to $T^{*}_{\rho,V}$ descends through the hierarchy
\[
	\mathcal F\longmapsto\dd{\mathcal F}(\rho)=[f]\longmapsto\grad f\longmapsto-\Bop_{\rho}f\longmapsto-\Aop_{\rho}\Bop_{\rho}f.
\]
The resulting law evolution is
\begin{equation}\label{abstract-gradient-flow-eq}
	\partial_t\rho_t=-\Kop_{\rho_t}\dd{\mathcal F}(\rho_t),
\end{equation}
and the selected particle velocity is $-\Bop_{\rho_t}\dd{\mathcal F}(\rho_t)$. Whenever the chain rule and evolution are justified,
\begin{equation}\label{abstract-dissipation-eq}
	\dv{}{t}\mathcal F(\rho_t)=-\|\dd{\mathcal F}(\rho_t)\|_{T^{*}_{\rho_t,V}}^2=-\|\partial_t\rho_t\|_{T_{\rho_t,V}}^2.
\end{equation}
\section{Otto geometry}\label{otto-sec}

\subsection{Weighted Sobolev cotangents and tangents}

Fix $\rho \in \Prob^{\infty}_+(M)$. The weighted homogeneous Sobolev space is
\[
	\dot{H}^{1}_{\rho}(M) = H^1(M)/\R
\]
with inner product
\begin{equation}\label{otto-cotangent-inner-eq}
	\langle [f],[h] \rangle_{\dot{H}^{1}_{\rho}} = \int_M \rho \langle \grad f,\grad h \rangle_g \, \dd{\mathrm{vol}}.
\end{equation}
Strict positivity of $\rho$ and Poincar\'e's inequality make this norm equivalent to the usual quotient $H^1$ norm. Its Hilbert dual is denoted by $\dot{H}^{-1}_{\rho}(M)$.

Take $V_{\rho}^{\Wass}=L^2(\rho;TM)$ and initially let $\Aop_{\rho}$ act distributionally. Formula \eqref{velocity-riesz-eq} has the Riesz representative
\[
	\Bop_{\rho}^{\Wass}f = \grad f.
\]
The corresponding Onsager operator is
\begin{equation}\label{wasserstein-onsager-eq}
	\Kop_{\rho}^{\Wass}f = - \Div(\rho \grad f).
\end{equation}

\begin{theorem}\label{otto-riesz-thm}
	The operator $\Kop_{\rho}^{\Wass}$ extends uniquely to an isometric isomorphism
	\[
		\Kop_{\rho}^{\Wass} \colon \dot{H}^{1}_{\rho}(M) \to \dot{H}^{-1}_{\rho}(M).
	\]
	Every smooth zero mass tangent $\sigma$ has a unique zero mean weak solution $\phi_{\sigma}$ of
	\[
		- \Div(\rho \grad \phi_{\sigma}) = \sigma.
	\]
	This solution determines the norm
	\begin{equation}\label{otto-tangent-norm-eq}
		\|\sigma\|_{\dot{H}^{-1}_{\rho}}^2 = \int_M \rho |\grad \phi_{\sigma}|_g^2 \, \dd{\mathrm{vol}}.
	\end{equation}
\end{theorem}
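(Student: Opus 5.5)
We view the theorem as the Riesz isomorphism of the Hilbert space $\dot{H}^{1}_{\rho}(M)$, written in terms of the continuity operator. Since $\dot{H}^{-1}_{\rho}(M)$ is by definition the Hilbert dual of $\dot{H}^{1}_{\rho}(M)$, we define for $[h]\in\dot{H}^{1}_{\rho}(M)$ the functional $\Kop^{\Wass}_{\rho}[h]\colon [f]\mapsto\int_M\rho\langle\grad f,\grad h\rangle_g\,\dd{\mathrm{vol}}$. This is well defined on classes modulo constants, because gradients kill constants. It is bounded by Cauchy--Schwarz in $L^2(\rho;TM)$. For smooth $h$, integration by parts as in \eqref{transport-pairing-eq} shows that the functional is exactly the pairing of $[f]$ with the distribution $-\Div(\rho\grad h)$. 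That distribution has zero total mass, so it defines an element of the dual of $H^1(M)/\R$. The extension therefore agrees with \eqref{wasserstein-onsager-eq} on the smooth core.

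The map $[h]\mapsto\Kop^{\Wass}_{\rho}[h]$ is precisely the Riesz map of the inner product \eqref{otto-cotangent-inner-eq}, so it is an isometric isomorphism by the Riesz representation theorem. For this to apply, $\dot{H}^{1}_{\rho}(M)$ must be a Hilbert space, i.e.\ complete and with a positive definite form. We obtain this from the equivalence with the standard quotient $H^1$ norm: the bounds $m_{\rho}\leqslant\rho\leqslant M_{\rho}$ from the proof of Lemma \ref{weighted-poisson-lem} handle the weight, and Poincar\'e's inequality on the connected closed manifold $M$ controls $\|f-\bar f\|_{L^2}$ by $\|\grad f\|_{L^2}$. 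Uniqueness of the extension follows from density of $C^{\infty}(M)/\R$ in $H^1(M)/\R$ together with continuity.

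Next, a smooth $\sigma\in C^{\infty}_0(M)$ defines an element of $\dot{H}^{-1}_{\rho}(M)$ through $[\psi]\mapsto\int_M\sigma\psi\,\dd{\mathrm{vol}}$. This is well defined because $\sigma$ has integral zero, and it is bounded by Poincar\'e's inequality. Surjectivity of the Riesz map then gives a unique class $[\phi]$ with $\Kop^{\Wass}_{\rho}[\phi]=\sigma$, which is the weak form of $-\Div(\rho\grad\phi)=\sigma$. Choosing the zero-mean representative of this class yields $\phi_{\sigma}$; this is the same Lax--Milgram argument as in Lemma \ref{weighted-poisson-lem}, and elliptic regularity moreover makes $\phi_{\sigma}$ smooth. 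The norm formula \eqref{otto-tangent-norm-eq} is then just the isometry: $\|\sigma\|_{\dot{H}^{-1}_{\rho}}=\|[\phi_{\sigma}]\|_{\dot{H}^{1}_{\rho}}$.

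The main point requiring care is identifying the abstractly defined dual $\dot{H}^{-1}_{\rho}(M)$ with the quotient tangent norm \eqref{quotient-tangent-norm-eq} coming from $V^{\Wass}_{\rho}=L^2(\rho;TM)$, so that the result is consistent with Proposition \ref{abstract-transport-prop}. For this we use that $H_{\rho}=N_{\rho}^{\perp}$ is the $L^2(\rho)$-closure of gradients: any $v$ with $\Aop_{\rho}v=\sigma$ splits orthogonally as $\grad\phi_{\sigma}+w$ with $\Div(\rho w)=0$. Hence the infimum in \eqref{quotient-tangent-norm-eq} is attained at $\grad\phi_{\sigma}$, and the two norms coincide.
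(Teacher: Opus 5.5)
Your proposal is correct and follows the paper's own route: the Riesz theorem for the inner product \eqref{otto-cotangent-inner-eq}, identification of the induced map with $-\Div(\rho\grad\,\cdot)$ via the transport pairing, and the norm formula obtained by taking $h=\phi_{\sigma}$ in the isometry. You add details the paper leaves implicit, namely completeness via the weight bounds and Poincar\'e, uniqueness of the extension by density, and the zero-mean normalisation. Your final paragraph on the quotient norm is not needed for this statement; it reproduces Proposition \ref{minimal-kinetic-prop}.
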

\begin{proof}
	The inner product \eqref{otto-cotangent-inner-eq} identifies $\dot{H}^{1}_{\rho}$ with its dual by the Riesz theorem. The weak operator associated with the form is exactly $-\Div(\rho\grad)$, since
	\[
		\langle [f],\Kop_{\rho}^{\Wass}[h] \rangle = \int_M \rho \langle \grad f,\grad h \rangle_g \, \dd{\mathrm{vol}}.
	\]
	The induced map is therefore an isometric isomorphism onto $\dot{H}^{-1}_{\rho}$. Taking $h=\phi_{\sigma}$ gives \eqref{otto-tangent-norm-eq}.
\end{proof}

\subsection{Horizontal gradients and minimal kinetic energy}

Weighted Helmholtz decomposition gives
\[
	L^2(\rho;TM) = \overline{\{ \grad f : f \in C^{\infty}(M) \}}^{L^2(\rho)} \oplus \Ker \Aop_{\rho}.
\]
The first summand is the Wasserstein horizontal space. It consists of the metric tangent vectors usually assigned to a measure in the Wasserstein space \cite{ambrosio2008gradient,villani2003topics,villani2009optimal}.

\begin{prop}\label{minimal-kinetic-prop}
	Let $\sigma \in C^{\infty}_0(M)$ and let $\phi_{\sigma}$ be as in Theorem \ref{otto-riesz-thm}. The gradient $\grad \phi_{\sigma}$ uniquely minimises the weighted kinetic energy, modulo equality in $L^2(\rho)$, amongst all $v \in L^2(\rho;TM)$ satisfying $-\Div(\rho v)=\sigma$. Moreover,
	\[
		\inf_{-\Div(\rho v)=\sigma}\int_M |v|_g^2 \rho \, \dd{\mathrm{vol}} = \int_M |\grad \phi_{\sigma}|_g^2 \rho \, \dd{\mathrm{vol}}.
	\]
\end{prop}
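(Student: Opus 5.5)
The plan is the standard orthogonality argument in $L^2(\rho;TM)$. Take any $v \in L^2(\rho;TM)$ with $-\Div(\rho v)=\sigma$, understood distributionally. By Theorem \ref{otto-riesz-thm} and Lemma \ref{weighted-poisson-lem}, $\phi_{\sigma}$ is smooth, so $\grad\phi_{\sigma} \in L^2(\rho;TM)$ is itself an admissible competitor. The difference $w = v - \grad\phi_{\sigma}$ satisfies $\Div(\rho w)=0$ in the distributional sense, so $w \in \Ker\Aop_{\rho}$ in $V^{\Wass}_{\rho}$.

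The key step is the orthogonality of $\grad\phi_{\sigma}$ to $w$. Testing the distributional identity $\Div(\rho w)=0$ against the smooth function $\phi_{\sigma}$ gives
\[
	\int_M \rho\langle \grad\phi_{\sigma}, w\rangle_g \, \dd{\mathrm{vol}} = 0 .
\]
This is exactly the observation following \eqref{transport-pairing-eq}, now applied with an $L^2(\rho)$ field $w$ rather than a smooth one. Since $\phi_{\sigma}$ is smooth, no approximation argument is needed.

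Pythagoras in $L^2(\rho;TM)$ then yields
\[
	\int_M |v|_g^2\rho\,\dd{\mathrm{vol}} = \int_M |\grad\phi_{\sigma}|_g^2\rho\,\dd{\mathrm{vol}} + \int_M |w|_g^2\rho\,\dd{\mathrm{vol}} .
\]
This gives the lower bound. The bound is attained at $v=\grad\phi_{\sigma}$, so the infimum is a minimum and equals the stated value. The right-hand side also coincides with $\|\sigma\|^2_{\dot H^{-1}_{\rho}}$ by \eqref{otto-tangent-norm-eq}.

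For uniqueness, equality forces $\int_M|w|_g^2\rho\,\dd{\mathrm{vol}}=0$, hence $w=0$ $\rho\,\mathrm{vol}$-almost everywhere. Because $\rho>0$, this is the same as almost everywhere with respect to $\mathrm{vol}$, which is equality in $L^2(\rho)$. The minimiser $\grad\phi_{\sigma}$ is also independent of the additive constant in $\phi_{\sigma}$.

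I do not expect a genuine obstacle here. The only care point is the meaning of the constraint for a merely $L^2(\rho)$ field. I would state explicitly that $-\Div(\rho v)=\sigma$ is meant in $\mathcal D'(M)$, i.e. $\int_M\rho\langle\grad\psi,v\rangle_g\,\dd{\mathrm{vol}}=\int_M\psi\sigma\,\dd{\mathrm{vol}}$ for all smooth $\psi$. That formulation is exactly what the orthogonality step uses. Equivalently, the result is the statement that $\grad\phi_{\sigma}$ is the orthogonal projection of any admissible $v$ onto the horizontal summand of the weighted Helmholtz decomposition, in agreement with the abstract Proposition \ref{abstract-transport-prop}.
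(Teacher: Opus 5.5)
Your proposal is correct and is the paper's own argument. You write an admissible $v$ as $\grad\phi_{\sigma}+w$ with $w\in\Ker\Aop_{\rho}$, pair $w$ against the smooth potential $\phi_{\sigma}$ via \eqref{transport-pairing-eq} to get orthogonality, and conclude by Pythagoras. Your explicit distributional reading of the constraint and the $w=0$ uniqueness step only spell out details the paper leaves implicit.
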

\begin{proof}
	Every admissible $v$ decomposes as $v=\grad\phi_{\sigma}+w$ with $w\in\Ker\Aop_{\rho}$. Formula \eqref{transport-pairing-eq} gives
	\[
		\int_M \rho \langle \grad\phi_{\sigma},w\rangle_g \, \dd{\mathrm{vol}} = \langle [\phi_{\sigma}],\Aop_{\rho}w\rangle = 0.
	\]
	Pythagoras' identity yields
	\[
		\|v\|_{L^2(\rho)}^2 = \|\grad\phi_{\sigma}\|_{L^2(\rho)}^2 + \|w\|_{L^2(\rho)}^2,
	\]
	which proves the assertion.
\end{proof}

Smooth tangents $\sigma$ and $\tau$ carry Otto's weak Riemannian metric
\begin{equation}\label{otto-metric-eq}
	G^{\Wass}_{\rho}(\sigma,\tau) = \int_M \rho \langle \grad\phi_{\sigma},\grad\phi_{\tau}\rangle_g \, \dd{\mathrm{vol}}.
\end{equation}
The metric is weak relative to the Fr\'echet topology: the norm in \eqref{otto-tangent-norm-eq} is of order minus one. Lott's connection, curvature and geodesic calculations are rigorous on $\Prob^{\infty}_+(M)$ whenever the relevant curves remain smooth and positive \cite{lott2008geometric}. The length metric associated with \eqref{otto-metric-eq} agrees on smooth immersed curves with the quadratic Wasserstein length. The metric completion reaches the full quadratic Wasserstein space $\Prob_2(M)$, whose tangent at a general measure is the $L^2(\rho)$ closure of gradients rather than a Fr\'echet tangent space \cite{ambrosio2008gradient}.

\subsection{Covectors, vector fields and density tangents}

The Otto dictionary at a smooth positive density may now be stated without suppressing any map. A regular cotangent potential is $[f]$. Its base manifold differential is $\grad f \in L^2(\rho;TM)$. The Wasserstein velocity representative is
\[
	\Bop^{\Wass}_{\rho}f = \grad f.
\]
The Wasserstein density tangent is
\[
	\Kop^{\Wass}_{\rho}f = \Aop_{\rho}\grad f.
\]
Conversely, a tangent $\sigma$ determines the potential $[\phi_{\sigma}]=(\Kop^{\Wass}_{\rho})^{-1}\sigma$ and horizontal velocity $\grad\phi_{\sigma}$. The musical notation therefore denotes an operator between the Hilbert cotangent and tangent completions, whilst the spatial gradient denotes an intermediate map into the velocity space.

\begin{remark}\label{otto-sharp-rem}
	The phrase ``Wasserstein gradient'' is used for both $\grad f$ and $-\Div(\rho\grad f)$ in the literature. The former is the horizontal velocity field and the latter is the density tangent. Every formula below displays $\Bop_{\rho}$ and $\Aop_{\rho}$ separately whenever this distinction matters.
\end{remark}

\section{Stein geometry}\label{stein-sec}

\subsection{Vector-valued reproducing kernels}

Let $k$ be a smooth positive definite vector bundle kernel on $TM$. Thus $k(x,y) \colon T_xM \to T_yM$ is linear, $k(y,x)=k(x,y)^{*}$, and for every finite collection $x_1,\ldots,x_n \in M$ and $\xi_i \in T_{x_i}M$ one has
\[
	\sum_{i,j=1}^{n} \langle k(x_i,x_j)\xi_i,\xi_j\rangle_g \geqslant 0.
\]
Let $\Hspace_k$ be the associated Hilbert space of vector fields, characterised by the reproducing identity
\begin{equation}\label{vector-reproducing-eq}
	\langle v,k(x,\cdot)\xi\rangle_{\Hspace_k} = \langle v(x),\xi\rangle_g.
\end{equation}
Smoothness of $k$ and compactness of $M$ imply continuous inclusions into spaces of continuously differentiable vector fields after a sufficient number of kernel derivatives has been assumed \cite{aronszajn1950theory,micchelli2005learning,carmeli2010vector}. We henceforth assume
\begin{equation}\label{kernel-inclusion-eq}
	\Hspace_k \hookrightarrow C^1(M;TM)
\end{equation}
continuously. A scalar kernel on a Euclidean space gives the familiar special case $k(x,y)\operatorname{id}_{\R^d}$.

Fix $\rho \in \Prob^{\infty}_+(M)$ and let
\[
	\Iop_{\rho} \colon \Hspace_k \to L^2(\rho;TM)
\]
be the inclusion. It is bounded, and its Hilbert adjoint satisfies
\begin{equation}\label{kernel-adjoint-eq}
	(\Iop_{\rho}^{*}u)(y) = \int_M k(x,y)u(x)\rho(x) \, \dd{\mathrm{vol}}(x).
\end{equation}
The equality is first verified against kernel sections and then extended by density. Indeed, for $v \in \Hspace_k$ one obtains from \eqref{vector-reproducing-eq}
\[
	\left\langle v,\int_M k(x,\cdot)u(x)\rho(x) \, \dd{\mathrm{vol}}(x)\right\rangle_{\Hspace_k} = \int_M \langle v(x),u(x)\rangle_g \rho(x) \, \dd{\mathrm{vol}}(x).
\]

Restrict the continuity operator to $\Hspace_k$ and set
\[
	\Aop^{\Stein}_{\rho}v = -\Div(\rho v).
\]
A regular covector $[f]$ turns formula \eqref{velocity-riesz-eq} into
\[
	\langle \Bop^{\Stein}_{\rho}f,v\rangle_{\Hspace_k} = \int_M \rho \langle \grad f,v\rangle_g \, \dd{\mathrm{vol}}.
\]
Formula \eqref{kernel-adjoint-eq} gives
\begin{equation}\label{stein-velocity-map-eq}
	\Bop^{\Stein}_{\rho}f = \Iop_{\rho}^{*}\grad f.
\end{equation}
The Stein Onsager operator is therefore
\begin{equation}\label{stein-onsager-eq}
	\Kop^{\Stein}_{\rho}f = -\Div\bigl(\rho \Iop_{\rho}^{*}\grad f\bigr).
\end{equation}

\subsection{The tangent and cotangent Hilbert spaces}

Let
\[
	N^{\Stein}_{\rho}=\left\{v\in\Hspace_k:\Div(\rho v)=0\right\}
\]
and let $H^{\Stein}_{\rho}=(N^{\Stein}_{\rho})^{\perp}$. Define $T_{\rho,\Stein}=\Aop^{\Stein}_{\rho}(\Hspace_k)$ and give it the quotient norm
\begin{equation}\label{stein-tangent-norm-eq}
	\|\sigma\|_{T_{\rho,\Stein}}^2=\inf\left\{\|v\|_{\Hspace_k}^2:-\Div(\rho v)=\sigma\right\}.
\end{equation}
The quotient by the closed kernel is Hilbert, so $T_{\rho,\Stein}$ is already complete and $\Aop^{\Stein}_{\rho}|_{H^{\Stein}_{\rho}}$ is an isometric isomorphism onto it. Define on $C^{\infty}(M)$
\begin{equation}\label{stein-cotangent-inner-eq}
	\langle f,h\rangle_{T^{*}_{\rho,\Stein}}=\left\langle\Iop_{\rho}^{*}\grad f,\Iop_{\rho}^{*}\grad h\right\rangle_{\Hspace_k}.
\end{equation}
Using the adjoint twice gives
\begin{equation}\label{stein-cotangent-kernel-eq}
	\langle f,h\rangle_{T^{*}_{\rho,\Stein}}=\int_{M\times M}\left\langle\grad f(x),k(y,x)\grad h(y)\right\rangle_g\rho(x)\rho(y)\,\dd{\mathrm{vol}}(x)\dd{\mathrm{vol}}(y).
\end{equation}
The nullspace may contain more than the constants. Quotient by all $f$ for which $\Iop_{\rho}^{*}\grad f=0$ and complete; the resulting Hilbert space is denoted by $T^{*}_{\rho,\Stein}$.

\begin{theorem}\label{stein-musical-thm}
	The operator in \eqref{stein-onsager-eq} extends uniquely to an isometric isomorphism
	\[
		\Kop^{\Stein}_{\rho}\colon T^{*}_{\rho,\Stein}\to T_{\rho,\Stein}.
	\]
	Its minimum norm velocity representative is $\Bop^{\Stein}_{\rho}f=\Iop_{\rho}^{*}\grad f$, and regular covectors satisfy
	\begin{equation}\label{stein-musical-duality-eq}
		\langle f,\Kop^{\Stein}_{\rho}h\rangle=\langle f,h\rangle_{T^{*}_{\rho,\Stein}}.
	\end{equation}
\end{theorem}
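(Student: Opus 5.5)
The plan is to treat Theorem \ref{stein-musical-thm} as the specialisation of Proposition \ref{abstract-transport-prop} to $V_{\rho}=\Hspace_k$ with $\mathcal C_{\rho}=C^{\infty}(M)$. Three hypotheses of that proposition have to be checked.

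First, $\Aop^{\Stein}_{\rho}\colon\Hspace_k\to\mathcal D'(M)$ must be continuous. By \eqref{kernel-inclusion-eq}, $v\mapsto\rho v$ is continuous into $C^1(M;TM)$, and $\Div$ maps this continuously into $C^0(M)\subseteq\mathcal D'(M)$. In particular $N^{\Stein}_{\rho}$ is closed, so $T_{\rho,\Stein}$ with the norm \eqref{stein-tangent-norm-eq} is the Hilbert quotient described before Proposition \ref{abstract-transport-prop}. Restricted to $H^{\Stein}_{\rho}$, the operator $\Aop^{\Stein}_{\rho}$ is an isometric isomorphism onto $T_{\rho,\Stein}$.

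Second, for each smooth $f$ the functional $v\mapsto\int_M\rho\langle\grad f,v\rangle_g\,\dd{\mathrm{vol}}=\langle\grad f,\Iop_{\rho}v\rangle_{L^2(\rho)}$ must be bounded on $\Hspace_k$. This is immediate from boundedness of $\Iop_{\rho}$. Its Riesz representative is therefore $\Iop_{\rho}^{*}\grad f$, which is \eqref{stein-velocity-map-eq}, and \eqref{kernel-adjoint-eq} gives its explicit form. Since this representative annihilates $N^{\Stein}_{\rho}$ by \eqref{transport-pairing-eq}, it lies in $H^{\Stein}_{\rho}$. This is exactly the statement that $\Iop_{\rho}^{*}\grad f$ is the minimum norm element of its coset: any other $v$ with $\Aop_{\rho}v=\Aop_{\rho}\Iop_{\rho}^{*}\grad f$ differs from it by an element of $N^{\Stein}_{\rho}$, and Pythagoras applies as in Proposition \ref{minimal-kinetic-prop}.

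Third, the duality \eqref{stein-musical-duality-eq} follows from \eqref{onsager-duality-eq}. Written out, $\langle f,\Kop^{\Stein}_{\rho}h\rangle=\int_M\rho\langle\grad f,\Iop_{\rho}^{*}\grad h\rangle_g\,\dd{\mathrm{vol}}=\langle\Iop_{\rho}^{*}\grad f,\Iop_{\rho}^{*}\grad h\rangle_{\Hspace_k}$, which is the inner product \eqref{stein-cotangent-inner-eq}.

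Proposition \ref{abstract-transport-prop} then yields an isometric isomorphism from $T^{*}_{\rho,\Stein}$ onto $\Aop_{\rho}\overline{\Iop_{\rho}^{*}\grad C^{\infty}(M)}$, and uniqueness of the extension follows from density of regular covectors in the completion.

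The main obstacle is surjectivity onto all of $T_{\rho,\Stein}$, which requires $\overline{\Iop_{\rho}^{*}\grad C^{\infty}(M)}=H^{\Stein}_{\rho}$. I would verify the density criterion of Proposition \ref{abstract-transport-prop} directly. Let $w\in H^{\Stein}_{\rho}$ with $\int_M\rho\langle\grad f,w\rangle_g\,\dd{\mathrm{vol}}=0$ for every smooth $f$. Since $w\in C^1$, integration by parts gives $\int_M f\Div(\rho w)\,\dd{\mathrm{vol}}=0$ for all $f$, hence $\Div(\rho w)=0$. Thus $w\in N^{\Stein}_{\rho}\cap(N^{\Stein}_{\rho})^{\perp}=\{0\}$.

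This step uses the $C^1$ inclusion \eqref{kernel-inclusion-eq} essentially, since without it the distributional divergence of $\rho w$ would not be identified pointwise. With it the argument closes, so the range of $\Kop^{\Stein}_{\rho}$ is the full tangent $T_{\rho,\Stein}$.
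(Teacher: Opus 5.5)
Your proposal is correct and follows the paper's proof: apply Proposition~\ref{abstract-transport-prop} with $V_{\rho}=\Hspace_k$ and $\mathcal C_{\rho}=C^{\infty}(M)$, then obtain surjectivity by showing that any $w\in H^{\Stein}_{\rho}$ annihilating every force $\grad f$ lies in $N^{\Stein}_{\rho}$ and therefore vanishes. One minor correction: the $C^1$ inclusion is not essential at that final step, because $\int_M\rho\langle\grad f,w\rangle_g\,\dd{\mathrm{vol}}=0$ for all smooth $f$ already says distributionally that $\Div(\rho w)=0$, which is exactly the condition $w\in N^{\Stein}_{\rho}$.
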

\begin{proof}
	Apply Proposition \ref{abstract-transport-prop} with $V_{\rho}=\Hspace_k$ and $\mathcal C_{\rho}=C^{\infty}(M)$. It remains to prove that $\Iop_{\rho}^{*}\grad C^{\infty}(M)$ is dense in $H^{\Stein}_{\rho}$. Let $w\in H^{\Stein}_{\rho}$ be orthogonal to this range. One then has
	\[
		0=\left\langle w,\Iop_{\rho}^{*}\grad f\right\rangle_{\Hspace_k}=\int_M\rho\langle w,\grad f\rangle_g\,\dd{\mathrm{vol}}=\langle\Aop^{\Stein}_{\rho}w,f\rangle
	\]
	for every $f\in C^{\infty}(M)$. Thus $w\in N^{\Stein}_{\rho}$; orthogonality gives $w=0$.
\end{proof}

Integral strict positive definiteness makes the cotangent nullspace amongst gradients consist only of constants; related assumptions are used by N\"usken and Renger \cite{nusken2023large}. The theorem retains the full nullspace and consequently covers kernels which cannot detect every regular force.

\begin{remark}\label{stein-raw-operator-rem}
	The integral map $\Iop_{\rho}\Iop_{\rho}^{*}\colon L^2(\rho;TM)\to L^2(\rho;TM)$ is commonly compact. Its inverse is then unbounded on its range and the map is not surjective on the ambient $L^2$ space. Theorem \ref{stein-musical-thm} concerns the quotient Hilbert spaces induced by this map, which is compatible with noninvertibility of the raw kernel operator on ordinary Sobolev spaces.
\end{remark}

\subsection{Comparison of the Hilbert completions}

The bounded inclusion $\Iop_{\rho}$ gives
\begin{equation}\label{stein-otto-cotangent-bound-eq}
	\|f\|_{T^{*}_{\rho,\Stein}}=\|\Iop_{\rho}^{*}\grad f\|_{\Hspace_k}\leqslant\|\Iop_{\rho}\|\|\grad f\|_{L^2(\rho)}.
\end{equation}
Hence the identity on the common smooth core extends to a bounded map
\[
	J_{\rho}\colon\dot H^1_{\rho}(M)\to T^{*}_{\rho,\Stein}.
\]
The map need not be injective; its kernel consists of Otto covectors invisible to the chosen kernel. Its range is dense by the definition of the Stein cotangent completion.

\begin{prop}\label{stein-otto-embedding-prop}
	The adjoint
	\[
		J_{\rho}^{*}\colon T_{\rho,\Stein}\to\dot H^{-1}_{\rho}(M)
	\]
	is a continuous injection. Its action on regular Stein tangents is the natural inclusion of the distribution $-\Div(\rho v)$ into $\dot H^{-1}_{\rho}(M)$.
\end{prop}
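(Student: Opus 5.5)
Continuity of $J_{\rho}^{*}$ is automatic: $J_{\rho}$ is bounded by \eqref{stein-otto-cotangent-bound-eq}, and its Hilbert adjoint is bounded with the same norm. To write $J_{\rho}^{*}$ as a map $T_{\rho,\Stein}\to\dot H^{-1}_{\rho}(M)$, I will use the Riesz identifications supplied by the two musical isomorphisms. Theorem \ref{stein-musical-thm} identifies $T_{\rho,\Stein}$ with the dual of $T^{*}_{\rho,\Stein}$ through the pairing \eqref{stein-musical-duality-eq}. Theorem \ref{otto-riesz-thm} identifies $\dot H^{-1}_{\rho}(M)$ with the dual of $\dot H^1_{\rho}(M)$ through the distributional pairing. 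Under these identifications the adjoint is defined by
\[
	\langle J_{\rho}[f],\sigma\rangle=\langle [f],J_{\rho}^{*}\sigma\rangle
\]
for $f$ in the smooth core. This relation determines $J_{\rho}^{*}\sigma$, because smooth functions are dense in $\dot H^1_{\rho}(M)$.

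Next I identify the action on regular tangents. Take $v\in\Hspace_k$ and $\sigma=-\Div(\rho v)$. By the inclusion \eqref{kernel-inclusion-eq}, $v$ is $C^1$, so $\sigma$ is a continuous zero-mass function. The transport pairing \eqref{transport-pairing-eq} then gives
\[
	\langle [f],\sigma\rangle=\int_M\rho\langle\grad f,v\rangle_g\,\dd{\mathrm{vol}}.
\]
Cauchy--Schwarz bounds the right-hand side by $\|v\|_{L^2(\rho)}\,\|[f]\|_{\dot H^1_{\rho}}$. Hence the distribution $\sigma$ extends continuously to $\dot H^1_{\rho}(M)$ and defines an element of $\dot H^{-1}_{\rho}(M)$. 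Its action on smooth $f$ agrees with the Stein pairing of $\sigma$ against $J_{\rho}[f]=[f]$. By density, $J_{\rho}^{*}\sigma$ is this natural distribution. All of $T_{\rho,\Stein}=\Aop^{\Stein}_{\rho}(\Hspace_k)$ consists of such tangents, so this description covers every element. It also yields the explicit estimate $\|J_{\rho}^{*}\sigma\|_{\dot H^{-1}_{\rho}}\leqslant\|\Iop_{\rho}\|\,\|\sigma\|_{T_{\rho,\Stein}}$: choose $v$ to be the minimal representative in $H^{\Stein}_{\rho}$.

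Injectivity is the only substantive point, and I would argue it in two ways. The abstract argument: the kernel of a Hilbert adjoint is the orthogonal complement of the range of the original map. The range of $J_{\rho}$ contains the smooth core, which is dense in $T^{*}_{\rho,\Stein}$ by construction of that completion, so the kernel of $J_{\rho}^{*}$ is zero. The concrete argument, as a cross-check: suppose $-\Div(\rho v)=0$ as a distribution. Then $v\in N^{\Stein}_{\rho}$, so its minimal representative vanishes and $\sigma=0$ in $T_{\rho,\Stein}$.

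The main obstacle is bookkeeping rather than analysis. I must check that the identification of $T_{\rho,\Stein}$ with the dual of $T^{*}_{\rho,\Stein}$ uses the same distributional pairing as the Otto side, so that ``adjoint'' and ``natural inclusion'' refer to the same object. This follows from \eqref{onsager-duality-eq}, which expresses the Stein Riesz pairing as the distributional evaluation $\langle f,\Kop^{\Stein}_{\rho}h\rangle$.
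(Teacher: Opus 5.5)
Your proposal is correct and follows essentially the same route as the paper. The paper argues in two lines that the adjoint of a bounded map with dense range is injective, and that the action on regular elements is identified through the common continuity pairing \eqref{transport-pairing-eq}; your version fills in the Riesz bookkeeping behind these two steps, and the explicit bound $\|J_{\rho}^{*}\sigma\|_{\dot H^{-1}_{\rho}}\leqslant\|\Iop_{\rho}\|\,\|\sigma\|_{T_{\rho,\Stein}}$ and the concrete injectivity cross-check are valid but inessential additions.
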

\begin{proof}
	The adjoint of a bounded map with dense range is injective. The identification on regular elements follows from the common continuity pairing \eqref{transport-pairing-eq}.
\end{proof}

The comparison is fibrewise. The Stein cotangent norm is weaker on the common core, whilst every Stein tangent has a canonical minimum-$\Hspace_k$ velocity representative. Neither assertion gives smooth dependence of these quotient spaces on $\rho$ or global existence of the nonlinear Stein flow.

\subsection{Restricted Riesz representation and projection}

The Stein velocity maximises the regularised first variation
\begin{equation}\label{stein-variational-velocity-eq}
	\Bop^{\Stein}_{\rho}f = \operatorname*{arg\,max}_{v\in\Hspace_k}\left\{ \int_M \rho\langle\grad f,v\rangle_g \, \dd{\mathrm{vol}} - \frac12\|v\|_{\Hspace_k}^2 \right\}.
\end{equation}
The Euler equation for this strictly concave problem is precisely \eqref{velocity-riesz-eq}. Formula \eqref{stein-variational-velocity-eq} gives the universally valid sense in which a Stein field is the gradient of a restricted differential: it is the Riesz representative in the reproducing kernel Hilbert norm.

An $L^2(\rho)$ orthogonal projection statement requires additional structure. Suppose $\Hspace_k$ is realised as a closed subspace of $L^2(\rho;TM)$ and its Hilbert norm equals the inherited $L^2(\rho)$ norm. Then $\Iop_{\rho}^{*}$ is the orthogonal projection and $\Bop^{\Stein}_{\rho}f$ is the $L^2(\rho)$ projection of $\grad f$. General reproducing kernel norms do not satisfy this identity. Their adjoint inclusion is an integral regularising operator, and its Hilbert norm differs from the Wasserstein velocity norm.

\section{Agreement of the geometries in prescribed comparisons of their evolutions}\label{geometry-comparison-sec}

The Otto and Stein constructions begin with the same scalar covector and the same spatial force. Their difference can therefore be expressed by one positive operator on the Otto horizontal space. This operator determines when the induced particles, laws, clocks, observations and quadratic costs agree.

\subsection{The common force and the effective mobility}\label{effective-mobility-subsec}

Let
\[
	H^{\Wass}_{\rho}=\overline{\{\grad f:f\in C^{\infty}(M)\}}^{L^2(\rho)}=(\Ker\Aop_{\rho})^{\perp}
\]
and let $\mathsf{P}^{\mathrm{hor}}_{\rho}$ be the weighted $L^2(\rho)$ orthogonal projection onto this space. Define
\[
	\mathsf{S}_{\rho}=\Iop_{\rho}\Iop_{\rho}^{*}\colon L^2(\rho;TM)\to L^2(\rho;TM)
\]
and
\begin{equation}\label{effective-mobility-eq}
	\Mop_{\rho}=\mathsf{P}^{\mathrm{hor}}_{\rho}\mathsf{S}_{\rho}\big|_{H^{\Wass}_{\rho}}.
\end{equation}
The inclusion $\Iop_{\rho}$ is written explicitly in $\mathsf{S}_{\rho}$ because the Stein Riesz representative originally belongs to $\Hspace_k$, whereas particles and the continuity operator see its included vector field in $L^2(\rho;TM)$.

\begin{lemma}\label{effective-mobility-lem}
	The operator $\Mop_{\rho}$ is bounded, positive and self-adjoint on $H^{\Wass}_{\rho}$. Every pair of regular covectors satisfies
	\begin{equation}\label{stein-relative-cometric-eq}
		\langle f,\Kop^{\Stein}_{\rho}h\rangle=\left\langle\grad f,\Mop_{\rho}\grad h\right\rangle_{L^2(\rho)},
	\end{equation}
	and
	\begin{equation}\label{stein-effective-tangent-eq}
		\Kop^{\Stein}_{\rho}f=\Aop_{\rho}\Mop_{\rho}\grad f.
	\end{equation}
\end{lemma}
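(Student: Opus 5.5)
The plan is to read off the three properties of $\Mop_{\rho}$ from the factorisation $\mathsf{S}_{\rho}=\Iop_{\rho}\Iop_{\rho}^{*}$. Then both identities follow from the transport pairing \eqref{transport-pairing-eq}, the Stein factorisation \eqref{stein-onsager-eq}, and the weighted Helmholtz decomposition $L^2(\rho;TM)=H^{\Wass}_{\rho}\oplus\Ker\Aop_{\rho}$.

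\emph{Operator properties.} Since $\Iop_{\rho}$ is bounded, $\mathsf{S}_{\rho}$ is bounded with $\|\mathsf{S}_{\rho}\|\leqslant\|\Iop_{\rho}\|^2$. For $u,u'\in L^2(\rho;TM)$ we have
\[
\langle u,\mathsf{S}_{\rho}u'\rangle_{L^2(\rho)}=\langle\Iop_{\rho}^{*}u,\Iop_{\rho}^{*}u'\rangle_{\Hspace_k},
\]
so $\mathsf{S}_{\rho}$ is self-adjoint and positive. The projection $\mathsf{P}^{\mathrm{hor}}_{\rho}$ is orthogonal. Hence for $u,u'\in H^{\Wass}_{\rho}$,
\[
\langle u,\Mop_{\rho}u'\rangle_{L^2(\rho)}=\langle \mathsf{P}^{\mathrm{hor}}_{\rho}u,\mathsf{S}_{\rho}u'\rangle_{L^2(\rho)}=\langle u,\mathsf{S}_{\rho}u'\rangle_{L^2(\rho)}.
\]
Symmetry and nonnegativity therefore transfer directly from $\mathsf{S}_{\rho}$, and $\Mop_{\rho}$ is the compression of a positive operator to a closed subspace.

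\emph{Identity \eqref{stein-effective-tangent-eq}.} Let $f$ be smooth. By \eqref{kernel-inclusion-eq}, $\Iop_{\rho}^{*}\grad f\in\Hspace_k\subset C^1(M;TM)$, so $\Kop^{\Stein}_{\rho}f=\Aop_{\rho}\Iop_{\rho}\Iop_{\rho}^{*}\grad f=\Aop_{\rho}\mathsf{S}_{\rho}\grad f$ is an honest function. Decompose
\[
\mathsf{S}_{\rho}\grad f=\Mop_{\rho}\grad f+(\id-\mathsf{P}^{\mathrm{hor}}_{\rho})\mathsf{S}_{\rho}\grad f.
\]
The second summand lies in $(H^{\Wass}_{\rho})^{\perp}=\Ker\Aop_{\rho}$, where $\Aop_{\rho}$ is understood distributionally on $L^2(\rho;TM)$. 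It is therefore annihilated, since for every smooth $h$ its pairing $\int_M\rho\langle\grad h,w\rangle_g\,\dd{\mathrm{vol}}$ vanishes because $w\perp\grad h$. This gives \eqref{stein-effective-tangent-eq} as an identity of distributions. The left side is smooth, so the right side is as well, even though $\Mop_{\rho}\grad f$ itself is only an $L^2(\rho)$ field.

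\emph{Identity \eqref{stein-relative-cometric-eq}.} Pair with $f$ and use \eqref{transport-pairing-eq} in its distributional form:
\[
\langle f,\Aop_{\rho}u\rangle=\int_M\rho\langle\grad f,u\rangle_g\,\dd{\mathrm{vol}},
\]
which holds by the very definition of distributional divergence for $u\in L^2(\rho;TM)$. Applying this with $u=\Mop_{\rho}\grad h$ gives the claim. Alternatively, one can compute $\langle\grad f,\mathsf{S}_{\rho}\grad h\rangle_{L^2(\rho)}=\langle f,h\rangle_{T^{*}_{\rho,\Stein}}$ and invoke \eqref{stein-musical-duality-eq}. In that route one drops $\mathsf{P}^{\mathrm{hor}}_{\rho}$ because $\grad f\in H^{\Wass}_{\rho}$.

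\emph{Main obstacle.} The only delicate point is the identification $(H^{\Wass}_{\rho})^{\perp}=\Ker\Aop_{\rho}$ for fields that are merely in $L^2(\rho)$. Here $\Aop_{\rho}$ must be interpreted distributionally, which is exactly the weighted Helmholtz decomposition stated in \S\ref{otto-sec}. I would verify it directly: $w\perp\grad h$ for all smooth $h$ is equivalent to $\langle h,\Aop_{\rho}w\rangle=0$ for all smooth $h$. Everything else is formal Hilbert space algebra.
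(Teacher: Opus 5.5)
Your proposal is correct and follows essentially the same route as the paper. Positivity and self-adjointness of $\Mop_{\rho}$ are inherited from $\mathsf{S}_{\rho}=\Iop_{\rho}\Iop_{\rho}^{*}$ by compressing to $H^{\Wass}_{\rho}$, and the tangent identity comes from discarding the component of $\mathsf{S}_{\rho}\grad f$ lying in $(H^{\Wass}_{\rho})^{\perp}=\Ker\Aop_{\rho}$. The cometric identity is the adjoint relation, which is your alternative route. Your direct check of $(H^{\Wass}_{\rho})^{\perp}=\Ker\Aop_{\rho}$ only makes explicit what the paper asserts. One small remark: smoothness of $\Kop^{\Stein}_{\rho}f$ comes from smoothness of $k$, since the inclusion \eqref{kernel-inclusion-eq} alone gives only continuity, but the lemma does not need it.
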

\begin{proof}
	The operator $\mathsf{S}_{\rho}=\Iop_{\rho}\Iop_{\rho}^{*}$ is bounded, positive and self-adjoint on $L^2(\rho;TM)$. If $u,v\in H^{\Wass}_{\rho}$, then
	\[
		\langle u,\Mop_{\rho}v\rangle_{L^2(\rho)}=\langle u,\mathsf{S}_{\rho}v\rangle_{L^2(\rho)}=\langle\mathsf{S}_{\rho}u,v\rangle_{L^2(\rho)}=\langle\Mop_{\rho}u,v\rangle_{L^2(\rho)},
	\]
	and positivity follows in the same manner. Formula \eqref{stein-relative-cometric-eq} follows from the adjoint relation. The vector $\mathsf{S}_{\rho}\grad f-\Mop_{\rho}\grad f$ belongs to $(H^{\Wass}_{\rho})^{\perp}=\Ker\Aop_{\rho}$, whence the continuity operator removes it and gives \eqref{stein-effective-tangent-eq}.
\end{proof}

The operator $\Mop_{\rho}$ is the Stein response viewed through the Otto horizontal quotient. It records every feature of the kernel which can affect the law and discards precisely the vertical component which affects particles without affecting the density.

\subsection{Exact agreement of particles and laws}\label{exact-comparison-subsec}

Let $[f]$ be a regular covector and write $u=\grad f$. The two selected descending particle fields, included in $L^2(\rho;TM)$, are
\[
	v^{\Wass}_{f}=-u
\]
and
\[
	v^{\Stein}_{f}=-\mathsf{S}_{\rho}u.
\]
Their density gradient tangents are $\Kop^{\Wass}_{\rho}f=\Aop_{\rho}u$ and $\Kop^{\Stein}_{\rho}f=\Aop_{\rho}\Mop_{\rho}u$.

\begin{theorem}\label{comparison-levels-thm}
	Let $[f]$ be a regular covector. Then the following statements hold.
	\begin{enumerate}[label=(\roman*)]
		\item The Otto and Stein density tangents agree if and only if
		\[
			\Mop_{\rho}\grad f=\grad f.
		\]
		\item The selected particle fields agree in $L^2(\rho;TM)$ if and only if
		\[
			\mathsf{S}_{\rho}\grad f=\grad f.
		\]
		\item Equality of the density tangents implies that the difference $\mathsf{S}_{\rho}\grad f-\grad f$ is vertical, namely it belongs to $\Ker\Aop_{\rho}$. Thus equal laws may retain different particle motions.
		\item The tangent defect satisfies the exact identity
		\begin{equation}\label{tangent-defect-eq}
			\|\Kop^{\Stein}_{\rho}f-\Kop^{\Wass}_{\rho}f\|_{\dot H^{-1}_{\rho}}=\|(\Mop_{\rho}-\id)\grad f\|_{L^2(\rho)}.
		\end{equation}
	\end{enumerate}
\end{theorem}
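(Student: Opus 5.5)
The whole argument rests on one structural fact: $\Aop_{\rho}$, viewed on $L^2(\rho;TM)$ with values in $\dot H^{-1}_{\rho}(M)$, kills exactly the vertical space $\Ker\Aop_{\rho}=(H^{\Wass}_{\rho})^{\perp}$ and is isometric on $H^{\Wass}_{\rho}$.

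The isometry follows from Theorem \ref{otto-riesz-thm} together with Proposition \ref{minimal-kinetic-prop}. For a gradient $u=\grad\phi$ one has $\|\Aop_{\rho}u\|_{\dot H^{-1}_{\rho}}=\|u\|_{L^2(\rho)}$, and this extends by density to the closure $H^{\Wass}_{\rho}$. Equivalently, by the pairing \eqref{transport-pairing-eq} and Theorem \ref{otto-riesz-thm}, $\Aop_{\rho}$ restricted to $H^{\Wass}_{\rho}$ is the abstract isometric isomorphism of Proposition \ref{abstract-transport-prop} with $V_{\rho}=L^2(\rho;TM)$. I would record this first as a short preliminary step. One point must be checked here: the distributional $\Aop_{\rho}$ agrees with the $\dot H^{-1}_{\rho}$-valued extension. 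This holds because pairing against smooth $f$ gives $\int\rho\langle\grad f,v\rangle$ in both cases.

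With this in hand, the items follow quickly. Write $u=\grad f$. Since $u\in H^{\Wass}_{\rho}$ and $\Mop_{\rho}u\in H^{\Wass}_{\rho}$, Lemma \ref{effective-mobility-lem} gives
\[
\Kop^{\Stein}_{\rho}f-\Kop^{\Wass}_{\rho}f=\Aop_{\rho}(\Mop_{\rho}u-u).
\]
The isometry on $H^{\Wass}_{\rho}$ then yields \eqref{tangent-defect-eq}, which is (iv). Item (i) follows from (iv), because a norm vanishes only at zero. Equivalently, $\Aop_{\rho}$ is injective on $H^{\Wass}_{\rho}$, since $H^{\Wass}_{\rho}\cap\Ker\Aop_{\rho}=0$.

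Item (ii) is immediate from the definitions $v^{\Wass}_f=-u$ and $v^{\Stein}_f=-\mathsf S_{\rho}u$ as elements of $L^2(\rho;TM)$.

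For (iii), decompose
\[
\mathsf S_{\rho}u-u=(\mathsf S_{\rho}u-\Mop_{\rho}u)+(\Mop_{\rho}u-u).
\]
The first summand lies in $(H^{\Wass}_{\rho})^{\perp}=\Ker\Aop_{\rho}$ by the definition of $\Mop_{\rho}$ through the orthogonal projection $\mathsf P^{\mathrm{hor}}_{\rho}$. The second summand vanishes under the hypothesis of (i). So the difference is vertical. To support the phrase ``equal laws may retain different particle motions'', I would note that the vertical part $(\id-\mathsf P^{\mathrm{hor}}_{\rho})\mathsf S_{\rho}u$ need not vanish, and point to the later Gaussian example rather than construct a counterexample here.

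The only step requiring care is the preliminary one. We must make sure that the tangent defect is measured in the Otto completion, where $\Aop_{\rho}$ is defined on all of $L^2(\rho;TM)$ and not merely on smooth fields. We must also confirm that $\Mop_{\rho}u$, which is generally only an $L^2$ limit of gradients, is handled by the extension. I would treat this by density of smooth gradients in $H^{\Wass}_{\rho}$ and continuity of both sides of \eqref{tangent-defect-eq}.
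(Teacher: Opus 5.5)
Your proposal is correct and follows essentially the same route as the paper's proof. Both use Lemma \ref{effective-mobility-lem} to write the defect as $\Aop_{\rho}(\Mop_{\rho}-\id)\grad f$ with a horizontal argument, then use that $\Aop_{\rho}$ is injective and isometric from $H^{\Wass}_{\rho}$ into $\dot H^{-1}_{\rho}$ to get (i) and (iv), and read (ii) directly off the particle fields. The only difference is in (iii): the paper observes directly that $\Aop_{\rho}(\mathsf{S}_{\rho}\grad f-\grad f)=\Kop^{\Stein}_{\rho}f-\Kop^{\Wass}_{\rho}f=0$, whereas you split off $(\id-\mathsf{P}^{\mathrm{hor}}_{\rho})\mathsf{S}_{\rho}\grad f$ and invoke (i), which is equally valid and also names the vertical part explicitly.
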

\begin{proof}
	Lemma \ref{effective-mobility-lem} gives
	\[
		\Kop^{\Stein}_{\rho}f-\Kop^{\Wass}_{\rho}f=\Aop_{\rho}(\Mop_{\rho}-\id)u.
	\]
	The vector $(\Mop_{\rho}-\id)u$ is horizontal, and $\Aop_{\rho}$ is injective and isometric from the horizontal space into $\dot H^{-1}_{\rho}$. This proves the first and fourth statements. The second statement is immediate from the two particle fields. If the density tangents agree, then $\Aop_{\rho}(\mathsf{S}_{\rho}u-u)=0$, which proves the third.
\end{proof}

\begin{corollary}\label{full-fibre-equivalence-cor}
	Let $E\subseteq H^{\Wass}_{\rho}$ be closed and suppose the gradients of regular covectors which belong to $E$ are dense in $E$. The Otto and Stein law gradients agree for every such covector if and only if
	\[
		\Mop_{\rho}|_E=\id_E.
	\]
	This condition makes their cotangent forms and tangent metrics agree on the law directions generated by $E$. The included particle fields agree for every such covector if and only if
	\[
		\mathsf{S}_{\rho}|_E=\id_E.
	\]
	Taking $E=H^{\Wass}_{\rho}$ characterises agreement of the complete fibrewise law geometries; agreement of the complete particle response remains the stronger condition.
\end{corollary}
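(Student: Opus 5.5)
The plan is to reduce the corollary to Theorem \ref{comparison-levels-thm} applied pointwise on the dense set of gradients in $E$, and then to extend by continuity. Let $D_E=\{\grad f: f\in C^{\infty}(M),\ \grad f\in E\}$, which is dense in $E$ by hypothesis.

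First, suppose the law gradients agree for every regular covector with $\grad f\in E$. Theorem \ref{comparison-levels-thm}(i) gives $\Mop_{\rho}u=u$ for every $u\in D_E$. Lemma \ref{effective-mobility-lem} shows that $\Mop_{\rho}-\id$ is bounded on $H^{\Wass}_{\rho}$, so its kernel is closed. That kernel therefore contains $\overline{D_E}=E$, which gives $\Mop_{\rho}|_E=\id_E$. The converse is immediate, since for $\grad f\in E$ the hypothesis yields $\Mop_{\rho}\grad f=\grad f$, and then Theorem \ref{comparison-levels-thm}(i) applies. Equivalently, \eqref{tangent-defect-eq} vanishes.

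For the metric statement, take $f,h$ with gradients in $E$. By \eqref{stein-relative-cometric-eq},
\[
\langle f,\Kop^{\Stein}_{\rho}h\rangle=\langle\grad f,\Mop_{\rho}\grad h\rangle_{L^2(\rho)}=\langle\grad f,\grad h\rangle_{L^2(\rho)}=\langle f,\Kop^{\Wass}_{\rho}h\rangle.
\]
So the Stein cotangent form \eqref{stein-cotangent-inner-eq} coincides with \eqref{otto-cotangent-inner-eq} on these covectors, and by density this persists on the completion of $D_E$. The tangent metrics on the law directions $\Aop_{\rho}E$ agree as well. This follows because both $\Kop^{\bullet}_{\rho}$ are isometric isomorphisms (Theorems \ref{otto-riesz-thm} and \ref{stein-musical-thm}) and they coincide on the generating covectors. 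Hence they transport equal cotangent norms to equal tangent norms on $\Aop_{\rho}E$.

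The particle statement follows the same route, using Theorem \ref{comparison-levels-thm}(ii) and boundedness of $\mathsf{S}_{\rho}-\id$ on $L^2(\rho;TM)$. This gives $\mathsf{S}_{\rho}|_E=\id_E$ as a map into $L^2(\rho;TM)$. For the final sentence take $E=H^{\Wass}_{\rho}$, whose gradients are dense by definition. To see that particle agreement is the stronger condition, note that $\mathsf{S}_{\rho}u=u$ implies $\mathsf{P}^{\mathrm{hor}}_{\rho}\mathsf{S}_{\rho}u=u$, while the reverse implication fails whenever $\mathsf{S}_{\rho}u-u$ has a nonzero vertical part, as in Theorem \ref{comparison-levels-thm}(iii).

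The main point needing care is the cotangent-completion statement. On $E$, the Stein nullspace $\Ker\Bop^{\Stein}_{\rho}$ must be shown trivial, so that the two completions really are identified. This follows because $\|\Iop_{\rho}^{*}u\|^2_{\Hspace_k}=\langle u,\Mop_{\rho}u\rangle=\|u\|^2$ on $E$, so no nonzero element of $E$ is invisible to the kernel.
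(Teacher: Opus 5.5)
Your proposal is correct and follows the paper's proof. It applies Theorem \ref{comparison-levels-thm} on the dense set of regular gradients in $E$, uses closedness of the kernels of the bounded operators $\Mop_{\rho}-\id$ and $\mathsf{S}_{\rho}-\id$, and reads off the cotangent forms from \eqref{stein-relative-cometric-eq}. The only deviation is in the tangent metrics: you get them from the musical isometries of Theorems \ref{otto-riesz-thm} and \ref{stein-musical-thm}, together with your check that the Stein nullspace meets $E$ trivially, whereas the paper specialises the restricted tangent formula of Proposition \ref{metric-comparison-prop} to $\Mop_{\rho}^{-1}=\id_E$; both yield the norm $\|u\|_{L^2(\rho)}$ on $\Aop_{\rho}(E)$.
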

\begin{proof}
	Theorem \ref{comparison-levels-thm} gives the two equivalences on the dense set of regular forces, and continuity extends them to $E$. If $\Mop_{\rho}|_E=\id_E$, formula \eqref{stein-relative-cometric-eq} identifies the cotangent forms. The restricted tangent formula in Proposition \ref{metric-comparison-prop}, with $\Mop_{\rho}^{-1}=\id_E$, identifies the tangent metrics.
\end{proof}

Particle equivalence is therefore stronger than law equivalence. The remaining vertical field can generate circulation, relabelling or another measure preserving rearrangement of particles; its disappearance after passage to the law is a quotient phenomenon rather than an assertion that the microscopic motions coincide.

\subsection{Time changes and equality of trajectories}\label{time-change-subsec}

Equality of vector fields is stronger than equality of their integral curves as unparametrised subsets of the density manifold. A positive scalar multiple changes the clock whilst preserving the route through the space of laws.

\begin{prop}\label{time-change-prop}
	Let $[f]$ be a regular covector and let $a>0$. One has
	\[
		\Kop^{\Stein}_{\rho}f=a\Kop^{\Wass}_{\rho}f
	\]
	if and only if
	\begin{equation}\label{mobility-eigenforce-eq}
		\Mop_{\rho}\grad f=a\grad f.
	\end{equation}
	Let $\mathcal F$ be a differentiable functional on a region where both descending flows are uniquely well posed. If a positive function $a(\rho)$ satisfies \eqref{mobility-eigenforce-eq} with $f=\delta\mathcal F/\delta\rho$ throughout that region, then the Otto and Stein flows of $\mathcal F$ have the same unparametrised trajectories. If $\rho^{\Wass}_s$ is the Otto solution, the Stein solution is $\rho^{\Stein}_t=\rho^{\Wass}_{s(t)}$, where
	\[
		\dv{s}{t}=a(\rho^{\Wass}_{s(t)}).
	\]
\end{prop}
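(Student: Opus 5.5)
The argument has two parts: a pointwise equivalence at a fixed density, and then its propagation along the flows by a reparametrisation of time.

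\emph{Pointwise equivalence.} Fix $\rho$ and a regular covector $[f]$, and write $u=\grad f\in H^{\Wass}_{\rho}$. By Lemma \ref{effective-mobility-lem},
\[
	\Kop^{\Stein}_{\rho}f-a\Kop^{\Wass}_{\rho}f=\Aop_{\rho}(\Mop_{\rho}-a\,\id)u .
\]
The vector $(\Mop_{\rho}-a\,\id)u$ is horizontal, since $\Mop_{\rho}$ maps $H^{\Wass}_{\rho}$ into itself and $u$ is horizontal. As in the proof of Theorem \ref{comparison-levels-thm}, $\Aop_{\rho}$ is injective on the horizontal space; indeed it is isometric into $\dot H^{-1}_{\rho}$. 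Hence the left side vanishes exactly when $\Mop_{\rho}u=au$. Positivity of $a$ plays no role in this step.

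\emph{Reduction for the flows.} Let $\rho^{\Wass}_s$ be the Otto solution, so that $\partial_s\rho^{\Wass}_s=-\Kop^{\Wass}_{\rho^{\Wass}_s}f_s$ with $f_s=\delta\mathcal F/\delta\rho$ evaluated at $\rho^{\Wass}_s$. Define $s(t)$ by the ODE $\dot s=a(\rho^{\Wass}_{s})$ with $s(0)=0$. I will assume, as part of ``smooth on the region'', that $s\mapsto a(\rho^{\Wass}_s)$ is continuous, or at least locally integrable and locally bounded below. Then one can write
\[
	t(s)=\int_0^s \frac{d\sigma}{a(\rho^{\Wass}_\sigma)},
\]
which is strictly increasing and $C^1$, and take $s(t)$ to be its inverse.

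\emph{Chain rule and uniqueness.} Set $\tilde\rho_t=\rho^{\Wass}_{s(t)}$. The chain rule in the Fréchet carrier, valid because the Otto curve is $C^1$ there, gives
\[
	\partial_t\tilde\rho_t=\dot s\,\partial_s\rho^{\Wass}_{s(t)}=-a(\tilde\rho_t)\Kop^{\Wass}_{\tilde\rho_t}f=-\Kop^{\Stein}_{\tilde\rho_t}f,
\]
where the last equality is the pointwise equivalence applied at $\tilde\rho_t$. So $\tilde\rho_t$ solves the Stein flow of $\mathcal F$ with the same initial datum. Uniqueness of the Stein flow on the region then gives $\rho^{\Stein}_t=\tilde\rho_t$. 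Since $s(\cdot)$ is an increasing bijection between the respective existence intervals, the two unparametrised trajectories coincide.

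\emph{Main obstacle.} I expect the difficulty to lie in the time reparametrisation rather than the algebra. One must keep $a$ bounded away from zero and infinity along the Otto curve, so that $s(t)$ neither stalls nor blows up in finite time. One must also check that $\tilde\rho_t$ stays in the region where uniqueness holds and that the two maximal existence intervals correspond under $s$. I would handle this by working on compact subintervals, where continuity and positivity give two-sided bounds on $a$, and then exhausting the interval. The hypothesis that both flows are uniquely well posed on the region is what removes any need for an independent existence argument for the Stein flow.
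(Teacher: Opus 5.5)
Your proposal is correct and follows the paper's proof. The pointwise equivalence comes from injectivity of $\Aop_{\rho}$ on $H^{\Wass}_{\rho}$ applied to $(\Mop_{\rho}-a\,\id)\grad f$, and the flow statement comes from differentiating the reparametrised Otto curve $\rho^{\Wass}_{s(t)}$ and invoking uniqueness of the Stein flow. Your extra care in constructing $s(t)$ (continuity and two-sided bounds on $a$ along the curve, and matching the existence intervals) makes explicit regularity that the paper uses only tacitly.
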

\begin{proof}
	The first equivalence follows from the injectivity of $\Aop_{\rho}$ on $H^{\Wass}_{\rho}$. Differentiating the reparametrised flow gives
	\[
		\dv{}{t}\rho^{\Wass}_{s(t)}=-\dv{s}{t}\Kop^{\Wass}_{\rho^{\Wass}_{s(t)}}\dd{\mathcal F}=-\Kop^{\Stein}_{\rho^{\Wass}_{s(t)}}\dd{\mathcal F}.
	\]
	Uniqueness identifies the reparametrised curve with the Stein solution.
\end{proof}

The scalar $a$ changes the speed of relaxation and the rate of dissipation. The sequence of laws remains the same. This distinction is useful when a kernel preconditions the force without changing its direction along the chosen functional.

\subsection{Prescribed observations and finite dimensional responses}\label{prescribed-observation-subsec}

Let $\mathcal O\colon\Prob^{\infty}_+(M)\to\mathcal N$ be a differentiable observation with values in a finite dimensional manifold or Banach space, and let $R_{\rho}$ be a prescribed linear map on $T_{\mathcal O(\rho)}\mathcal N$. The two gradient tangents induced by $[f]$ will be called equivalent through the prescription $(\mathcal O,R)$ at $\rho$ when
\begin{equation}\label{prescribed-equivalence-eq}
	D\mathcal O(\rho)[\Kop^{\Stein}_{\rho}f]=R_{\rho}D\mathcal O(\rho)[\Kop^{\Wass}_{\rho}f].
\end{equation}
Exact law equality corresponds to $\mathcal O=\id$ and $R_{\rho}=\id$, whilst a prescribed time scale corresponds to $\mathcal O=\id$ and $R_{\rho}=a(\rho)\id$. Let $\mathscr F$ be a class of differentiable functionals and let $U\subseteq\Prob^{\infty}_+(M)$. One calls the two geometries $(\mathcal O,R)$ equivalent for $\mathscr F$ on $U$ when \eqref{prescribed-equivalence-eq} holds at every $\rho\in U$ with $f=\delta\mathcal F/\delta\rho$ for every $\mathcal F\in\mathscr F$. Thus the prescribed equivalence records both the forces which are tested and the information which is observed.

\begin{prop}\label{prescribed-observation-prop}
	Let $T_1,\ldots,T_m\in C^{\infty}(M)$ and let
	\[
		\mathcal O_T(\rho)=\left(\int_MT_1\rho\,\dd{\mathrm{vol}},\ldots,\int_MT_m\rho\,\dd{\mathrm{vol}}\right).
	\]
	If $R_{\rho}$ is represented by the matrix $(R_{ij}(\rho))$, then \eqref{prescribed-equivalence-eq} holds exactly when
	\begin{equation}\label{moment-comparison-eq}
		\int_M\rho\langle\grad T_i,\Mop_{\rho}\grad f\rangle_g\,\dd{\mathrm{vol}}=\sum_{j=1}^{m}R_{ij}(\rho)\int_M\rho\langle\grad T_j,\grad f\rangle_g\,\dd{\mathrm{vol}}
	\end{equation}
	for every $i$. Consequently, equality of the observed moment evolutions means that $(\Mop_{\rho}-\id)\grad f$ is orthogonal to the gradients of the chosen observables.
\end{prop}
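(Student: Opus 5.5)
The plan is to compute the differential of the moment observation and then pair it with the two tangents using the relative cometric formula \eqref{stein-relative-cometric-eq}. The map $\mathcal O_T$ is the restriction to $\Prob^{\infty}_+(M)$ of a continuous linear map on the affine hyperplane. Its differential at $\rho$ is therefore, in every component,
\[
	D\mathcal O_T(\rho)[\sigma]_i=\int_M T_i\sigma\,\dd{\mathrm{vol}}=\langle [T_i],\sigma\rangle,
\]
for $\sigma\in C^{\infty}_0(M)$. This is exactly the regular covector pairing of Proposition \ref{full-cotangent-prop}, and it does not depend on the additive constant in $T_i$. Consequently the $i$th component of \eqref{prescribed-equivalence-eq} reads
\[
	\langle T_i,\Kop^{\Stein}_{\rho}f\rangle=\sum_j R_{ij}(\rho)\langle T_j,\Kop^{\Wass}_{\rho}f\rangle .
\]

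Next, rewrite each side as a weighted $L^2(\rho)$ inner product of gradients. For the Stein side, Lemma \ref{effective-mobility-lem}, formula \eqref{stein-relative-cometric-eq} with $h=f$, gives
\[
	\langle T_i,\Kop^{\Stein}_{\rho}f\rangle=\langle\grad T_i,\Mop_{\rho}\grad f\rangle_{L^2(\rho)}.
\]
For the Otto side, the transport pairing \eqref{transport-pairing-eq} with $v=\grad f$, or equivalently Theorem \ref{otto-riesz-thm}, gives
\[
	\langle T_i,\Kop^{\Wass}_{\rho}f\rangle=\langle\grad T_i,\grad f\rangle_{L^2(\rho)}.
\]
Substituting these two identities yields \eqref{moment-comparison-eq} for each $i$. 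Since the components of a vector in $\R^m$ determine it, the equivalence with \eqref{prescribed-equivalence-eq} follows. The pairings are legitimate because all objects are smooth: $\Mop_{\rho}\grad f$ lies in $H^{\Wass}_{\rho}$, and the Stein tangent is the distribution $\Aop_{\rho}\Iop_{\rho}\Iop_{\rho}^{*}\grad f$, a $C^0$-divergence of a $C^1$ field by \eqref{kernel-inclusion-eq}. Pairing it with a smooth $T_i$ is therefore the integral of \eqref{transport-pairing-eq}.

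For the final assertion, take $R_{\rho}=\id$, so that $R_{ij}=\delta_{ij}$. Subtracting the two sides of \eqref{moment-comparison-eq} gives
\[
	\langle\grad T_i,(\Mop_{\rho}-\id)\grad f\rangle_{L^2(\rho)}=0
\]
for every $i$, which is the stated orthogonality; the implication also reverses.

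There is no serious obstacle. The only point requiring care is justifying \eqref{stein-relative-cometric-eq} with $T_i$ in the first slot. Its proof uses $\langle T_i,\Aop_{\rho}\mathsf S_{\rho}\grad f\rangle=\langle\grad T_i,\mathsf S_{\rho}\grad f\rangle_{L^2(\rho)}$, and then replaces $\mathsf S_{\rho}$ by $\Mop_{\rho}$. That replacement is valid because $\grad T_i$ is horizontal, so the projection $\mathsf{P}^{\mathrm{hor}}_{\rho}$ may be inserted for free.
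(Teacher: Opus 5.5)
Your proof is correct and takes the same route as the paper. The paper likewise evaluates $D\mathcal O_{T,i}(\rho)[\Aop_\rho v]$ via the transport pairing \eqref{transport-pairing-eq} and substitutes $v=\Mop_\rho\grad f$ and $v=\grad f$, which is the computation you carry out through \eqref{stein-relative-cometric-eq}. Your justification for replacing $\mathsf S_\rho\grad f$ by $\Mop_\rho\grad f$, namely that $\grad T_i$ is horizontal, is the step the paper leaves implicit in Lemma \ref{effective-mobility-lem}.
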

\begin{proof}
	Formula \eqref{transport-pairing-eq} gives
	\[
		D\mathcal O_{T,i}(\rho)[\Aop_{\rho}v]=\int_M\rho\langle\grad T_i,v\rangle_g\,\dd{\mathrm{vol}}.
	\]
	Substitute $v=\Mop_{\rho}\grad f$ and $v=\grad f$ into \eqref{prescribed-equivalence-eq}.
\end{proof}

A finite dimensional response can be prescribed by the kernel itself whenever the required response is positive and symmetric on the chosen force space.

\begin{prop}\label{finite-rank-prescription-prop}
	Fix $\rho$ and let $E\subset H^{\Wass}_{\rho}\cap C^{\infty}(M;TM)$ be finite dimensional. Let $R\colon E\to E$ be positive definite and self-adjoint in $L^2(\rho;TM)$. There exists a smooth finite rank vector-valued kernel whose reproducing kernel Hilbert space is $E$ and for which
	\[
		\mathsf{S}_{\rho}u=Ru
	\]
	for every $u\in E$. Consequently, the Stein law tangent generated by a force $u=\grad f\in E$ is $\Aop_{\rho}Ru$. The choices $R=\id$ and $R=a\id$ produce respectively exact particle agreement and a prescribed time change on $E$.
\end{prop}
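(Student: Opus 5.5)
We construct the kernel explicitly from an orthonormal basis adapted to $R$. Let $n=\dim E$. Since $R$ is positive definite and self-adjoint on the finite dimensional inner product space $(E,\langle\cdot,\cdot\rangle_{L^2(\rho)})$, the spectral theorem gives an $L^2(\rho)$-orthonormal basis $e_1,\ldots,e_n$ of $E$ with $Re_i=\lambda_ie_i$ and $\lambda_i>0$. Each $e_i$ is a smooth vector field, so we define
\[
	k(x,y)\xi=\sum_{i=1}^{n}\lambda_i\langle e_i(x),\xi\rangle_g\,e_i(y),\qquad \xi\in T_xM.
\]
This is smooth, satisfies $k(y,x)=k(x,y)^{*}$, and is positive semidefinite, because $\sum_{a,b}\langle k(x_a,x_b)\xi_a,\xi_b\rangle_g=\sum_i\lambda_i\bigl(\sum_a\langle e_i(x_a),\xi_a\rangle_g\bigr)^2\geqslant0$.

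Next we identify the reproducing kernel Hilbert space. The candidate is $\Hspace_k=E$ with inner product $\langle u,v\rangle_{\Hspace_k}=\langle R^{-1}u,v\rangle_{L^2(\rho)}$, which is well defined and positive definite because $R$ is invertible on $E$. In the basis, $\langle e_i,e_j\rangle_{\Hspace_k}=\lambda_i^{-1}\delta_{ij}$. To check the reproducing identity \eqref{vector-reproducing-eq}, write $v=\sum_j c_je_j$. Then
\[
	\langle v,k(x,\cdot)\xi\rangle_{\Hspace_k}=\sum_i\lambda_i\langle e_i(x),\xi\rangle_g\,c_i\lambda_i^{-1}=\langle v(x),\xi\rangle_g.
\]
By uniqueness of the reproducing kernel Hilbert space attached to a positive kernel (Aronszajn), $\Hspace_k=E$ with this norm. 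Because $E$ is finite dimensional and consists of smooth fields, the inclusion \eqref{kernel-inclusion-eq} into $C^1$ is automatic.

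Now compute $\mathsf S_\rho$ on $E$ using \eqref{kernel-adjoint-eq}:
\[
	\Iop_\rho^{*}u(y)=\int_Mk(x,y)u(x)\rho(x)\,\dd{\mathrm{vol}}(x)=\sum_i\lambda_i\langle e_i,u\rangle_{L^2(\rho)}e_i(y).
\]
For $u\in E$ this equals $Ru$, so $\mathsf S_\rho u=Ru$. Since $Ru\in E\subset H^{\Wass}_\rho$, the horizontal projection acts trivially, giving $\Mop_\rho u=Ru$. Lemma \ref{effective-mobility-lem} then yields $\Kop^{\Stein}_\rho f=\Aop_\rho Ru$ for $u=\grad f\in E$. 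The two special cases follow directly. The choice $R=\id$ gives $\mathsf S_\rho|_E=\id_E$, which is exact particle agreement by Theorem \ref{comparison-levels-thm}(ii). The choice $R=a\id$ gives $\Mop_\rho\grad f=a\grad f$, which is the time change of Proposition \ref{time-change-prop}.

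The only genuinely delicate step is the identification of $\Hspace_k$ with $E$ carrying the $R^{-1}$-weighted norm. We handle it by the explicit reproducing computation above. Two cautions apply. First, the kernel, and hence the identity $\mathsf S_\rho u=Ru$, is tied to the fixed $\rho$, because the basis is orthonormal only in $L^2(\rho)$. Second, $\mathsf S_\rho$ vanishes on $E^{\perp}$, so the conclusion concerns forces in $E$ only.
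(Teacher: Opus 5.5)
Your proof is correct and follows the paper's construction. You use the same Hilbert structure $\langle R^{-1}v,w\rangle_{L^2(\rho)}$ on $E$ and the same kernel: your $\sum_i\lambda_i e_i(y)\langle e_i(x),\xi\rangle_g$ is the paper's formula written in the $\Hspace$-orthonormal basis $\sqrt{\lambda_i}\,e_i$. The only difference is cosmetic: you verify $\mathsf{S}_{\rho}|_E=R$ through the integral formula \eqref{kernel-adjoint-eq} in an eigenbasis, whereas the paper reads it off the abstract adjoint identity $\Iop^{*}=R\mathsf{P}_E$.
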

\begin{proof}
	Give $E$ the Hilbert inner product
	\[
		\langle v,w\rangle_{\Hspace}=\langle R^{-1}v,w\rangle_{L^2(\rho)}.
	\]
	Let $\Iop\colon E\to L^2(\rho;TM)$ be the inclusion and let $\mathsf{P}_E$ denote the weighted $L^2(\rho)$ projection onto $E$. The adjoint identity gives $\Iop^{*}u=R\mathsf{P}_Eu$, hence $\Iop\Iop^{*}=R\mathsf{P}_E$ and its restriction to $E$ is $R$. Evaluation is continuous because $E$ is finite dimensional and consists of smooth fields, so $E$ has a smooth reproducing kernel. If $e_1,\ldots,e_m$ is an orthonormal basis in the chosen Hilbert norm, the kernel is
	\[
		k(x,y)\xi=\sum_{j=1}^{m}e_j(y)\langle e_j(x),\xi\rangle_g.
	\]
\end{proof}

The construction is fibrewise because both the $L^2(\rho)$ inner product and the desired force space may vary with $\rho$. A smoothly varying prescribed response requires a smooth finite rank bundle $E_{\rho}$ and a smooth positive family $R_{\rho}$.

\subsection{Quantitative comparability}\label{quantitative-comparison-subsec}

Equality is often more than an application requires. Bounds on the spectrum of $\Mop_{\rho}$ compare the two dissipation structures on a chosen force subspace.

\begin{prop}\label{metric-comparison-prop}
	Let $E\subset H^{\Wass}_{\rho}$ be a closed $\Mop_{\rho}$ invariant subspace and suppose that constants $0<c\leqslant C<\infty$ satisfy
	\begin{equation}\label{mobility-spectral-bounds-eq}
		c\|u\|_{L^2(\rho)}^2\leqslant\langle u,\Mop_{\rho}u\rangle_{L^2(\rho)}\leqslant C\|u\|_{L^2(\rho)}^2
	\end{equation}
	for every $u\in E$. If $\grad f\in E$, then
	\begin{equation}\label{cotangent-comparison-eq}
		c\|f\|_{\dot H^1_{\rho}}^2\leqslant\|f\|_{T^{*}_{\rho,\Stein}}^2\leqslant C\|f\|_{\dot H^1_{\rho}}^2.
	\end{equation}
	The tangent metrics induced by restricting both Hilbert pairs to $E$ satisfy, for $\sigma=\Aop_{\rho}u$,
	\begin{equation}\label{tangent-metric-comparison-eq}
		C^{-1}\|\sigma\|_{\dot H^{-1}_{\rho}}^2\leqslant\|\sigma\|_{T_{\rho,\Stein;E}}^2\leqslant c^{-1}\|\sigma\|_{\dot H^{-1}_{\rho}}^2.
	\end{equation}
\end{prop}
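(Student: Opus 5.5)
The cotangent bounds \eqref{cotangent-comparison-eq} follow from Lemma \ref{effective-mobility-lem}, and the tangent bounds from the fact that a positive operator and its inverse have reciprocal spectral bounds.

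\emph{Cotangent bounds.} For $u=\grad f\in E$, formulas \eqref{stein-cotangent-inner-eq} and \eqref{stein-relative-cometric-eq} give
\[
	\|f\|_{T^{*}_{\rho,\Stein}}^2=\langle f,\Kop^{\Stein}_{\rho}f\rangle=\langle u,\Mop_{\rho}u\rangle_{L^2(\rho)}.
\]
On the other hand, $\|f\|_{\dot H^1_{\rho}}^2=\|u\|_{L^2(\rho)}^2$ by \eqref{otto-cotangent-inner-eq}. Substituting both identities into \eqref{mobility-spectral-bounds-eq} yields \eqref{cotangent-comparison-eq}.

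\emph{Tangent bounds.} First I interpret $\|\cdot\|_{T_{\rho,\Stein;E}}$. The Stein cotangent norm restricted to forces in $E$ is the quadratic form of $\Mop_{\rho}|_E$. Since $E$ is closed and $\Mop_{\rho}$ invariant, $\Mop_{\rho}|_E$ is a bounded self-adjoint operator on the Hilbert space $E$. By \eqref{mobility-spectral-bounds-eq} it is coercive, so it is invertible on $E$ with $\|\Mop_{\rho}|_E^{-1}\|\leqslant c^{-1}$.

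The restricted Stein tangent is $\sigma=\Kop^{\Stein}_{\rho}f=\Aop_{\rho}\Mop_{\rho}u$. The restricted Hilbert pair is obtained by applying the isometry of Proposition \ref{abstract-transport-prop} to the form $\langle u,\Mop_{\rho}u\rangle$. For a tangent written as $\sigma=\Aop_{\rho}w$ with $w\in E$, the dual norm is therefore
\[
	\|\sigma\|_{T_{\rho,\Stein;E}}^2=\langle w,\Mop_{\rho}^{-1}w\rangle_{L^2(\rho)},
\]
obtained by setting $u=\Mop_{\rho}^{-1}w$ and computing $\langle u,\Mop_{\rho}u\rangle=\langle w,\Mop_{\rho}^{-1}w\rangle$. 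This is the restricted tangent formula that Corollary \ref{full-fibre-equivalence-cor} already invokes. By Theorem \ref{otto-riesz-thm} and the horizontality of $w$, the Otto side is $\|\sigma\|_{\dot H^{-1}_{\rho}}^2=\|w\|_{L^2(\rho)}^2$, because $\Aop_{\rho}$ is isometric from $H^{\Wass}_{\rho}$.

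It remains to show that the spectrum of $\Mop_{\rho}|_E^{-1}$ lies in $[C^{-1},c^{-1}]$. The spectrum of $\Mop_{\rho}|_E$ lies in $[c,C]$, so by the spectral theorem, or functional calculus applied to $t\mapsto t^{-1}$, the spectrum of the inverse lies in $[C^{-1},c^{-1}]$. Hence
\[
	C^{-1}\|w\|^2\leqslant\langle w,\Mop_{\rho}^{-1}w\rangle\leqslant c^{-1}\|w\|^2,
\]
which is \eqref{tangent-metric-comparison-eq} once $w$ is renamed $u$ as in the statement.

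\emph{Expected obstacle.} The delicate step is justifying the restricted tangent formula: the infimum in \eqref{stein-tangent-norm-eq} is taken over all of $\Hspace_k$, whereas the restricted pair only sees $E$. I would therefore define $T_{\rho,\Stein;E}$ explicitly as the Riesz dual of the restricted cotangent form, namely the completion of $E$ under $\langle u,\Mop_{\rho}u\rangle$, pushed forward by $\Aop_{\rho}\Mop_{\rho}$. With that definition, the identity $\|\Aop_{\rho}\Mop_{\rho}u\|^2=\langle u,\Mop_{\rho}u\rangle$ gives the formula above directly. Invariance of $E$ is what ensures that $\Mop_{\rho}^{-1}w$ stays in $E$.
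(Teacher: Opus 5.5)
Your proposal is correct and follows the paper's proof step for step. The paper obtains the cotangent bounds from \eqref{stein-relative-cometric-eq}, invertibility of $\Mop_{\rho}|_E$ from the lower bound, the restricted Stein tangent metric $\langle\Mop_{\rho}^{-1}u,u\rangle$, and the spectral inclusion of the inverse in $[C^{-1},c^{-1}]$, exactly as you do. The obstacle you flag is harmless: for horizontal $u$ the field $\Iop_{\rho}^{*}u$ is $\Hspace_k$-orthogonal to $N^{\Stein}_{\rho}$, so the unrestricted quotient norm \eqref{stein-tangent-norm-eq} of $\Aop_{\rho}\Mop_{\rho}u=\Aop_{\rho}\Iop_{\rho}^{*}u$ already equals $\langle u,\Mop_{\rho}u\rangle_{L^2(\rho)}$, and hence your restricted definition coincides with the full Stein tangent norm on $\Aop_{\rho}(E)$.
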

\begin{proof}
	Formula \eqref{stein-relative-cometric-eq} identifies the Stein cotangent norm with $\langle u,\Mop_{\rho}u\rangle$ for $u=\grad f$, which proves \eqref{cotangent-comparison-eq}. The lower bound makes $\Mop_{\rho}|_E$ invertible. The identification $\Aop_{\rho}\colon E\to\Aop_{\rho}(E)$ makes the restricted Stein tangent metric $\langle\Mop_{\rho}^{-1}u,u\rangle$. The spectrum of the inverse lies in $[C^{-1},c^{-1}]$, which proves \eqref{tangent-metric-comparison-eq}.
\end{proof}

Norm equivalence gives comparable steepest descent speeds and comparable energy dissipation for every force in $E$. It does not require $\Mop_{\rho}$ to preserve each force direction; consequently, the trajectories may still differ.

A chosen ambient norm turns the infinitesimal defect into a comparison of the resulting curves.

\begin{prop}\label{flow-comparison-prop}
	Let $X$ be a Banach chart containing two law curves with the same initial value, and let their vector fields $V^{\Wass}$ and $V^{\Stein}$ be $L$ Lipschitz on the region traversed. Suppose
	\[
		\|V^{\Stein}(\rho)-V^{\Wass}(\rho)\|_X\leqslant\varepsilon
	\]
	throughout that region. Then
	\[
		\|\rho_t^{\Stein}-\rho_t^{\Wass}\|_X\leqslant
		\begin{cases}
			\varepsilon\bigl(e^{Lt}-1\bigr)/L, & L>0,\\
			\varepsilon t, & L=0.
		\end{cases}
	\]
\end{prop}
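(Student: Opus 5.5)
The plan is the standard Gr\"onwall argument in the Banach chart $X$. I will write both curves as integral equations from the common initial value $\rho_0$,
\[
	\rho^{\bullet}_t=\rho_0+\int_0^tV^{\bullet}(\rho^{\bullet}_r)\,\dd r,\qquad \bullet\in\{\Wass,\Stein\},
\]
which is legitimate because both are $C^1$ curves in $X$ solving the respective ODEs on the region traversed. Set $e(t)=\|\rho^{\Stein}_t-\rho^{\Wass}_t\|_X$.

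Subtracting the two integral equations, I will insert and subtract $V^{\Wass}(\rho^{\Stein}_r)$ in the integrand. The uniform defect hypothesis bounds $\|V^{\Stein}(\rho^{\Stein}_r)-V^{\Wass}(\rho^{\Stein}_r)\|_X$ by $\varepsilon$. The Lipschitz hypothesis bounds $\|V^{\Wass}(\rho^{\Stein}_r)-V^{\Wass}(\rho^{\Wass}_r)\|_X$ by $Le(r)$; this needs only the Lipschitz property of $V^{\Wass}$. Together these give the integral inequality
\[
	e(t)\leqslant\varepsilon t+L\int_0^te(r)\,\dd r.
\]

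For $L=0$ this is already $e(t)\leqslant\varepsilon t$. For $L>0$ I will apply the integral Gr\"onwall lemma. The cleanest route is to set $\Phi(t)=\int_0^te(r)\,\dd r$, so that $\Phi'\leqslant\varepsilon t+L\Phi$. Then $\bigl(e^{-Lt}\Phi\bigr)'\leqslant\varepsilon te^{-Lt}$, and integrating yields
\[
	\Phi(t)\leqslant\frac{\varepsilon}{L^2}\bigl(e^{Lt}-1-Lt\bigr).
\]
Substituting back gives $e(t)\leqslant\varepsilon t+L\Phi(t)=\varepsilon(e^{Lt}-1)/L$. An equivalent check is that $\varepsilon(e^{Lt}-1)/L$ is the exact solution of the comparison equation $y'=\varepsilon+Ly$ with $y(0)=0$.

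The only delicate point is the meaning of ``the region traversed''. Both curves, and for the triangle-inequality step the point $\rho^{\Stein}_r$ at which $V^{\Wass}$ is evaluated, must lie in the set where the Lipschitz and defect bounds hold. I will state this as the standing hypothesis, read as both curves remaining in a region $U\subseteq X$ on which $V^{\Wass}$ is $L$-Lipschitz and the defect bound holds pointwise. With that reading the argument is purely metric and uses nothing from the Otto or Stein structure. In particular, Theorem \ref{comparison-levels-thm}(iv) can supply $\varepsilon$ when $X$ carries the $\dot H^{-1}_{\rho}$ norm, although that identification is fibrewise and not needed here.
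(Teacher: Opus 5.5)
Your proposal is correct and follows the paper's proof: the paper also subtracts the two integral equations and uses the triangle inequality with the defect and Lipschitz bounds to obtain $e(t)\leqslant\varepsilon t+L\int_0^te(s)\,\dd{s}$, then concludes by Gr\"onwall's inequality. You carry out the Gr\"onwall integration explicitly, and correctly. Your remark that only the Lipschitz property of $V^{\Wass}$, evaluated along the Stein curve, is needed is a harmless sharpening.
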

\begin{proof}
	The integral equations and the triangle inequality give
	\[
		\|\rho_t^{\Stein}-\rho_t^{\Wass}\|_X\leqslant\int_0^tL\|\rho_s^{\Stein}-\rho_s^{\Wass}\|_X\,\dd{s}+\varepsilon t.
	\]
	Gronwall's inequality gives the result.
\end{proof}

The identity \eqref{tangent-defect-eq} supplies the local value of $\varepsilon$ when the chosen chart norm is controlled by the Otto tangent norm and the force remains in a region on which $\Mop_{\rho}$ varies continuously.

\subsection{Interpretation of the equivalences}\label{comparison-interpretation-subsec}

A transport geometry is a constitutive relation between a covector force and a flux of probability. Otto geometry uses the identity response on horizontal $L^2(\rho)$ forces. Stein geometry uses the positive response $\Mop_{\rho}$ after the vertical part of the kernel field has been removed. Agreement of the geometries on a force subspace means agreement of these constitutive relations on that subspace.

Full agreement of the law geometry at $\rho$ is the identity $\Mop_{\rho}=\id$ on the Otto horizontal space. The Onsager operators, cotangent forms and tangent metrics then coincide on every law direction. The kernel response $\mathsf{S}_{\rho}$ may retain a vertical component, so the corresponding particle fields can remain different even when the complete fibrewise law geometry agrees.

Law equivalence identifies particle fields modulo measure preserving rearrangements. Two particle systems may therefore transport every initial sample along different trajectories whilst producing the same evolving population law. Finite empirical configurations can distinguish these lifts even when the limiting law cannot.

Observation equivalence identifies only the information retained by a chosen map $\mathcal O$. Equality of sufficient statistics, moments or coarse variables says that the geometric defect lies in the kernel of $D\mathcal O$. The ambient laws may differ in all directions which the observation forgets.

Time change equivalence identifies the route towards equilibrium and permits a different clock. Dissipation is rescaled by the same positive factor along the chosen force. Norm comparability is weaker: it controls cost and rate up to constants whilst allowing the force direction to rotate.

Equality of a single gradient flow does not identify the full geometry. The flow tests only the covector $\dd{\mathcal F}(\rho)$ at each visited density. Large deviations actions, responses to other functionals, perturbations transverse to the curve and finite particle fluctuations may continue to distinguish the two structures.

\section{Differentiable functionals and the hierarchy of gradient forces}\label{functional-sec}

\subsection{Regular first variations}

Let $\mathcal F \colon \Prob^{\infty}_+(M) \to \R$ be differentiable. A regular first variation at $\rho$ is a smooth function $\delta\mathcal F/\delta\rho(\rho)$ for which
\begin{equation}\label{regular-first-variation-eq}
	D\mathcal F(\rho)[\sigma] = \int_M \frac{\delta\mathcal F}{\delta\rho}(\rho)\sigma \, \dd{\mathrm{vol}}
\end{equation}
for every $\sigma \in C^{\infty}_0(M)$. It is determined modulo constants. If $\rho_t$ satisfies the continuity equation with velocity $v_t$, formulae \eqref{transport-pairing-eq} and \eqref{regular-first-variation-eq} give
\begin{equation}\label{transport-chain-rule-eq}
	\dv{}{t}\mathcal F(\rho_t) = \int_M \rho_t\left\langle \grad\frac{\delta\mathcal F}{\delta\rho}(\rho_t),v_t\right\rangle_g \, \dd{\mathrm{vol}}.
\end{equation}
The spatial gradient removes the constant ambiguity and supplies the Otto representative of the covector.

Choose either the Wasserstein or Stein velocity map and write $\bullet\in\{\Wass,\Stein\}$. The descending velocity and density tangent are
\begin{equation}\label{two-gradient-fields-eq}
	v^{\bullet}_{\mathcal F}(\rho) = -\Bop^{\bullet}_{\rho}\frac{\delta\mathcal F}{\delta\rho}(\rho)
\end{equation}
and
\[
	\operatorname{grad}^{\bullet}\mathcal F(\rho) = \Kop^{\bullet}_{\rho}\frac{\delta\mathcal F}{\delta\rho}(\rho).
\]
The descending flow has $\partial_t\rho=-\operatorname{grad}^{\bullet}\mathcal F(\rho)$. Subject to the regularity needed for \eqref{transport-chain-rule-eq}, its dissipation identity is
\begin{equation}\label{two-dissipation-eq}
	\dv{}{t}\mathcal F(\rho_t) = -\left\|\frac{\delta\mathcal F}{\delta\rho}(\rho_t)\right\|_{T^{*}_{\rho_t,\bullet}}^2.
\end{equation}

\subsection{Relative entropy and the target Stein operator}

Fix $\pi \in \Prob^{\infty}_+(M)$ and define
\[
	\Ent_{\pi}(\rho) = \int_M \rho\log\frac{\rho}{\pi} \, \dd{\mathrm{vol}}.
\]

\begin{lemma}\label{entropy-differential-lem}
	The regular differential of $\Ent_{\pi}$ is represented by
	\[
		\frac{\delta\Ent_{\pi}}{\delta\rho}(\rho) = \log\frac{\rho}{\pi}+1.
	\]
	Its cotangent class is $[\log(\rho/\pi)]$.
\end{lemma}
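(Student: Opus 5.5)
The plan is to compute the directional derivative of $\Ent_{\pi}$ along a smooth curve in $\Prob^{\infty}_+(M)$ and read off the integrand. Fix $\rho\in\Prob^{\infty}_+(M)$ and $\sigma\in C^{\infty}_0(M)=T_{\rho}\Prob^{\infty}_+(M)$, as in Proposition \ref{smooth-density-manifold-prop}. Consider the affine curve $\rho_{\varepsilon}=\rho+\varepsilon\sigma$. By the openness argument in that proposition, it remains in $\Prob^{\infty}_+(M)$ for $|\varepsilon|<m_{\rho}/(2\|\sigma\|_{C^0})$. On this interval one has the uniform bounds $\rho_{\varepsilon}\geqslant m_{\rho}/2$ and $\rho_{\varepsilon}\leqslant\|\rho\|_{C^0}+m_{\rho}/2$. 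The integrand $\phi(\varepsilon,x)=\rho_{\varepsilon}(x)\log\bigl(\rho_{\varepsilon}(x)/\pi(x)\bigr)$ is therefore smooth in $\varepsilon$. Its derivative,
\[
	\partial_{\varepsilon}\phi=\sigma\Bigl(\log\frac{\rho_{\varepsilon}}{\pi}+1\Bigr),
\]
is bounded uniformly in $(\varepsilon,x)$, because $\log\rho_{\varepsilon}$ and $\log\pi$ are bounded on the compact manifold $M$ (recall $\pi>0$ is smooth). Dominated convergence then permits differentiation under the integral, which gives
\[
	D\Ent_{\pi}(\rho)[\sigma]=\int_M\Bigl(\log\frac{\rho}{\pi}+1\Bigr)\sigma\,\dd{\mathrm{vol}}.
\]
The function $\log(\rho/\pi)+1$ is smooth because $\rho$ and $\pi$ are smooth and positive. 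This is exactly the regular first variation in the sense of \eqref{regular-first-variation-eq}.

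Two further points should be checked. First, $\Ent_{\pi}$ is differentiable in the Fréchet sense on $\Prob^{\infty}_+(M)$, not merely Gateaux differentiable along lines. For this I would note that the map $\rho\mapsto\rho\log(\rho/\pi)$ is a smooth Nemytskii operator from the open positive set of $C^0$ into $C^0$, since $t\mapsto t\log(t/\pi(x))$ is smooth on $(0,\infty)$ uniformly for $x$ in a compact set. Composing with the continuous linear functional of integration makes $\Ent_{\pi}$ continuously differentiable already in the $C^0$ topology, and a fortiori in the finer Fréchet topology. Second, the differential computed along $\rho+\varepsilon\sigma$ is indeed the differential at $\rho$ evaluated at $\sigma$, since this curve has velocity $\sigma$ in the affine chart.

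Finally, I would identify the cotangent class. By Proposition \ref{full-cotangent-prop}, two smooth functions define the same regular covector exactly when they differ by a constant. Since $\int_M\sigma\,\dd{\mathrm{vol}}=0$, the additive $1$ pairs to zero, so the class is $[\log(\rho/\pi)+1]=[\log(\rho/\pi)]$. The only step requiring genuine care is the justification of differentiation under the integral. The uniform positive lower bound on $\rho_{\varepsilon}$ and on $\pi$, which compactness supplies, makes this routine.
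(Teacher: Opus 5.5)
Your proof is correct and follows the same route as the paper: differentiate $\Ent_{\pi}$ along the affine line $\rho+t\sigma$, then drop the constant $1$ because $\sigma$ has zero integral. Your justification of differentiation under the integral, and of Fr\'echet differentiability through the Nemytskii operator on $C^0$, supplies detail that the paper compresses into ``direct differentiation''.
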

\begin{proof}
	Let $\rho_t=\rho+t\sigma$ with $\int_M\sigma\,\dd{\mathrm{vol}}=0$. Direct differentiation gives
	\[
		\dv{}{t}\bigg|_{t=0}\Ent_{\pi}(\rho_t)=\int_M\left(\log\frac{\rho}{\pi}+1\right)\sigma\,\dd{\mathrm{vol}}.
	\]
	The integral of the constant term against $\sigma$ vanishes, proving the cotangent statement.
\end{proof}

A smooth vector field $v$ has target Stein operator
\begin{equation}\label{target-stein-operator-eq}
	\Tspace_{\pi}v = \Div v + \langle\grad\log\pi,v\rangle_g.
\end{equation}
Its expectation under $\rho$ is related exactly to the entropy differential.

\begin{theorem}\label{stein-entropy-derivative-thm}
	Every $v \in \Vect(M)$ satisfies
	\begin{equation}\label{stein-entropy-identity-eq}
		D\Ent_{\pi}(\rho)[\Aop_{\rho}v] = -\int_M \rho\,\Tspace_{\pi}v \, \dd{\mathrm{vol}}.
	\end{equation}
\end{theorem}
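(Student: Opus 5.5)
The identity follows by combining Lemma \ref{entropy-differential-lem} with the transport pairing \eqref{transport-pairing-eq} and one integration by parts. Since $\Aop_{\rho}v=-\Div(\rho v)$ has zero mass, Lemma \ref{entropy-differential-lem} gives
\[
	D\Ent_{\pi}(\rho)[\Aop_{\rho}v]=\int_M\log\frac{\rho}{\pi}\,\Aop_{\rho}v\,\dd{\mathrm{vol}}.
\]
The constant $+1$ in the first variation drops out because the tangent integrates to zero. Formula \eqref{transport-pairing-eq}, applied with $f=\log(\rho/\pi)$, turns this into
\[
	\int_M\rho\left\langle\grad\log\rho-\grad\log\pi,v\right\rangle_g\,\dd{\mathrm{vol}}.
\]
Here $f$ is smooth because $\rho,\pi\in\Prob^{\infty}_+(M)$ are smooth and strictly positive.

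Next use $\rho\grad\log\rho=\grad\rho$. The first term becomes $\int_M\langle\grad\rho,v\rangle_g\,\dd{\mathrm{vol}}$. Since $M$ is closed, the divergence theorem rewrites this as $-\int_M\rho\Div v\,\dd{\mathrm{vol}}$. The second term is already $-\int_M\rho\langle\grad\log\pi,v\rangle_g\,\dd{\mathrm{vol}}$.

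Adding the two terms gives
\[
	-\int_M\rho\bigl(\Div v+\langle\grad\log\pi,v\rangle_g\bigr)\,\dd{\mathrm{vol}},
\]
which is $-\int_M\rho\,\Tspace_{\pi}v\,\dd{\mathrm{vol}}$ by \eqref{target-stein-operator-eq}.

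There is no real obstacle. The only points to check are the sign convention of $\Aop_{\rho}$, which is fixed in Lemma \ref{pushforward-differential-lem}, and the absence of boundary terms, which closedness of $M$ guarantees.
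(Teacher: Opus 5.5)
Your proposal is correct and follows the paper's own proof essentially step for step. Lemma \ref{entropy-differential-lem} and the transport pairing \eqref{transport-pairing-eq} reduce the left side to $\int_M\rho\langle\grad\log(\rho/\pi),v\rangle_g\,\dd{\mathrm{vol}}$; the identity $\rho\grad\log\rho=\grad\rho$ and boundaryless integration by parts turn the $\log\rho$ part into $-\int_M\rho\Div v\,\dd{\mathrm{vol}}$, and the $\grad\log\pi$ term completes $\Tspace_{\pi}v$.
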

\begin{proof}
	Lemma \ref{entropy-differential-lem} and \eqref{transport-pairing-eq} give
	\[
		D\Ent_{\pi}(\rho)[\Aop_{\rho}v] = \int_M \rho\left\langle\grad\log\frac{\rho}{\pi},v\right\rangle_g\,\dd{\mathrm{vol}}.
	\]
	The identity $\rho\grad\log\rho=\grad\rho$ and integration by parts yield
	\[
		\int_M \rho\langle\grad\log\rho,v\rangle_g\,\dd{\mathrm{vol}} = \int_M\langle\grad\rho,v\rangle_g\,\dd{\mathrm{vol}} = -\int_M\rho\Div v\,\dd{\mathrm{vol}}.
	\]
	Subtracting the term involving $\grad\log\pi$ proves \eqref{stein-entropy-identity-eq}.
\end{proof}

The theorem gives a precise variational form of Stein's method: the target Stein expectation is the negative directional derivative of relative entropy along the transport generated by $v$. The maximising field in a reproducing kernel Hilbert unit ball is therefore the steepest entropy descent field for the Stein metric.

\subsection{The Wasserstein and Stein entropy flows}

The Wasserstein velocity of relative entropy is
\[
	v^{\Wass}_{\Ent}(\rho) = -\grad\log\frac{\rho}{\pi}.
\]
Its continuity equation is
\begin{equation}\label{wasserstein-entropy-flow-eq}
	\partial_t\rho = \Div\left(\rho\grad\log\frac{\rho}{\pi}\right) = \Delta\rho-\Div(\rho\grad\log\pi).
\end{equation}
This is the Fokker--Planck equation with invariant density $\pi$. The dissipation is the relative Fisher information
\begin{equation}\label{wasserstein-entropy-dissipation-eq}
	\dv{}{t}\Ent_{\pi}(\rho_t) = -\int_M\left|\grad\log\frac{\rho_t}{\pi}\right|_g^2\rho_t\,\dd{\mathrm{vol}}.
\end{equation}
The variational time discretisation of Jordan, Kinderlehrer and Otto and Otto's geometric formulation make \eqref{wasserstein-entropy-flow-eq} rigorous beyond the smooth setting \cite{jordan1998variational,otto2001geometry,ambrosio2008gradient}.

The Stein velocity is
\begin{equation}\label{stein-entropy-velocity-eq}
	v^{\Stein}_{\Ent}(\rho) = -\Iop_{\rho}^{*}\grad\log\frac{\rho}{\pi}.
\end{equation}
Its density evolution is
\begin{equation}\label{stein-entropy-flow-eq}
	\partial_t\rho = \Div\left(\rho\Iop_{\rho}^{*}\grad\log\frac{\rho}{\pi}\right).
\end{equation}
The dissipation identity becomes
\begin{equation}\label{stein-entropy-dissipation-eq}
	\dv{}{t}\Ent_{\pi}(\rho_t) = -\left\|\Iop_{\rho_t}^{*}\grad\log\frac{\rho_t}{\pi}\right\|_{\Hspace_k}^2.
\end{equation}
The right hand side is the squared kernel Stein discrepancy, also called the Stein--Fisher information in the gradient flow literature \cite{liu2017stein,duncan2023geometry,nusken2023large}.

Let $M=\R^d$ for the following formal calculation and assume all boundary terms vanish. A scalar kernel permits integration by parts in the $x$ variable, which gives
\begin{align}
	v^{\Stein}_{\Ent}(\rho)(y)
	&= -\int_{\R^d}k(x,y)\grad_x\log\frac{\rho(x)}{\pi(x)}\rho(x)\,\dd{x} \notag\\
	&= \int_{\R^d}\left[k(x,y)\grad\log\pi(x)+\grad_x k(x,y)\right]\rho(x)\,\dd{x}. \label{stein-variational-field-eq}
\end{align}
Formula \eqref{stein-variational-field-eq} is the mean field velocity used by Stein variational gradient descent \cite{liu2016stein}. The score of $\rho$ has disappeared through integration by parts, whilst the cotangent from which the field was obtained remains $[\log(\rho/\pi)]$.

\begin{corollary}\label{entropy-equivalence-cor}
	Let $u_{\rho}=\grad\log(\rho/\pi)$. The Wasserstein and Stein entropy law tangents agree at $\rho$ if and only if
	\[
		\Mop_{\rho}u_{\rho}=u_{\rho}.
	\]
	They determine the same unparametrised entropy trajectory on a region if $\Mop_{\rho}u_{\rho}=a(\rho)u_{\rho}$ for a positive function $a$. Their instantaneous entropy dissipations agree if and only if
	\begin{equation}\label{entropy-dissipation-comparison-eq}
		\langle u_{\rho},\Mop_{\rho}u_{\rho}\rangle_{L^2(\rho)}=\|u_{\rho}\|_{L^2(\rho)}^2.
	\end{equation}
	Equality in \eqref{entropy-dissipation-comparison-eq} need not imply equality of the law tangents.
\end{corollary}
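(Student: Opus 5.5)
The corollary is Theorem \ref{comparison-levels-thm} and Proposition \ref{time-change-prop} specialised to the covector supplied by Lemma \ref{entropy-differential-lem}. We take $f=\log(\rho/\pi)$, which is smooth because $\rho,\pi\in\Prob^{\infty}_+(M)$. The additive constant in $\delta\Ent_{\pi}/\delta\rho$ is invisible to the gradient, so $u_{\rho}=\grad f$ is the common force for both geometries. The Wasserstein and Stein entropy law tangents are $\Kop^{\Wass}_{\rho}f=\Aop_{\rho}u_{\rho}$ and $\Kop^{\Stein}_{\rho}f=\Aop_{\rho}\Mop_{\rho}u_{\rho}$, the latter by \eqref{stein-effective-tangent-eq}.

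The first assertion is Theorem \ref{comparison-levels-thm}(i) applied to this $f$. The underlying reason is that $(\Mop_{\rho}-\id)u_{\rho}$ is horizontal, and $\Aop_{\rho}$ is injective on $H^{\Wass}_{\rho}$.

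The second assertion is Proposition \ref{time-change-prop} with $\mathcal F=\Ent_{\pi}$. I would state explicitly the hypothesis that both entropy flows \eqref{wasserstein-entropy-flow-eq} and \eqref{stein-entropy-flow-eq} are uniquely well posed on the region, as that proposition requires. The Stein curve is then the Otto curve reparametrised by $\dd s/\dd t=a(\rho^{\Wass}_{s(t)})$. This ODE has a unique solution if $a$ is positive and continuous along the Otto curve; I would add that continuity requirement if it is needed.

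For the dissipation claim, the Otto dissipation is $\|u_{\rho}\|_{L^2(\rho)}^2$ by \eqref{wasserstein-entropy-dissipation-eq}. The Stein dissipation is $\|\Iop_{\rho}^{*}u_{\rho}\|_{\Hspace_k}^2$ by \eqref{stein-entropy-dissipation-eq}, which equals $\langle u_{\rho},\Mop_{\rho}u_{\rho}\rangle_{L^2(\rho)}$ by \eqref{stein-relative-cometric-eq} with $f=h$. Equating the two gives \eqref{entropy-dissipation-comparison-eq}. As a pointwise-in-time statement at $\rho$, no well-posedness is needed: the dissipation is read as the instantaneous derivative along either tangent, which is $-\langle f,\Kop^{\bullet}_{\rho}f\rangle$ by \eqref{transport-pairing-eq}.

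The only real step is showing that equality of dissipations is strictly weaker than equality of tangents, and this needs a counterexample. I would use Proposition \ref{finite-rank-prescription-prop}, taking $\pi$ and $\rho$ in $\Prob^{\infty}_+(M)$ such that $u_{\rho}\neq0$. Choose a smooth gradient field $w$ that is $L^2(\rho)$-orthogonal to $u_{\rho}$, for instance by Gram--Schmidt from any other nonconstant smooth potential. Set $E=\operatorname{span}\{u_{\rho},w\}$, normalise both vectors to unit length, and let $R$ act on $E$ in the basis $(u_{\rho},w)$ by the matrix $\begin{pmatrix}1&\epsilon\\ \epsilon&1\end{pmatrix}$ with $0<\epsilon<1$. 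This $R$ is self-adjoint and positive definite. Proposition \ref{finite-rank-prescription-prop} gives a finite rank kernel with $\mathsf{S}_{\rho}|_E=R$. Since $E\subset H^{\Wass}_{\rho}$, we get $\Mop_{\rho}u_{\rho}=u_{\rho}+\epsilon w$. Then $\langle u_{\rho},\Mop_{\rho}u_{\rho}\rangle=\|u_{\rho}\|^2$, so the dissipations agree, while $\Mop_{\rho}u_{\rho}\neq u_{\rho}$, so by the first assertion the law tangents differ. The points to check are that $w$ can be chosen smooth and that $R$ is positive definite; both hold by construction.
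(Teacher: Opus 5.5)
Your proposal is correct. For the first three assertions it matches the paper's proof. The paper cites Theorem \ref{comparison-levels-thm} and Proposition \ref{time-change-prop} and reads off the two dissipations $-\|u_{\rho}\|_{L^2(\rho)}^2$ and $-\langle u_{\rho},\Mop_{\rho}u_{\rho}\rangle_{L^2(\rho)}$; your added well-posedness and continuity-of-$a$ caveats are harmless.

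The routes differ on the final assertion. The paper only observes that a single quadratic value cannot force $\Mop_{\rho}u_{\rho}=u_{\rho}$ without a further sign or spectral hypothesis on $\Mop_{\rho}-\id$. Indeed, if $\Mop_{\rho}-\id\geqslant0$, then $\langle u_{\rho},(\Mop_{\rho}-\id)u_{\rho}\rangle=0$ gives $(\Mop_{\rho}-\id)^{1/2}u_{\rho}=0$ and hence equality. You instead exhibit an explicit kernel through Proposition \ref{finite-rank-prescription-prop}, on which $\Mop_{\rho}-\id$ acts on $E$ as the indefinite matrix $\begin{pmatrix}0&\epsilon\\ \epsilon&0\end{pmatrix}$, which is exactly the case the paper's caveat excludes. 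This makes ``need not imply'' a proved statement inside the paper's own framework: the kernel is smooth and finite rank, satisfies $\Hspace_k\hookrightarrow C^1(M;TM)$, and $E\subset H^{\Wass}_{\rho}$ gives $\Mop_{\rho}=\mathsf{S}_{\rho}$ on $E$. The cost is that the kernel is tailored to the single density $\rho$, which is all this fibrewise statement requires.

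Two small points. First, $u_{\rho}$ is fixed and cannot be normalised. Write $R$ in the orthonormal basis $(u_{\rho}/\|u_{\rho}\|_{L^2(\rho)},\hat w)$ with $\hat w=w/\|w\|_{L^2(\rho)}$; you then get $\Mop_{\rho}u_{\rho}=u_{\rho}+\epsilon\|u_{\rho}\|_{L^2(\rho)}\hat w$ rather than $u_{\rho}+\epsilon w$, and the conclusion is unchanged. Second, the existence of $w$ uses $d\geqslant1$, so that smooth gradients form an infinite dimensional space. Then $w=\grad(g-cf)$ is again a smooth gradient, and it is nonzero once $\grad g$ is not proportional to $u_{\rho}$.
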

\begin{proof}
	The first two statements are Theorem \ref{comparison-levels-thm} and Proposition \ref{time-change-prop}. The two dissipation formulae are $-\|u_{\rho}\|_{L^2(\rho)}^2$ and $-\langle u_{\rho},\Mop_{\rho}u_{\rho}\rangle_{L^2(\rho)}$. Equality of one quadratic value does not force $\Mop_{\rho}u_{\rho}=u_{\rho}$ unless a further sign or spectral hypothesis is imposed on $\Mop_{\rho}-\id$.
\end{proof}

\section{Prescribed finite dimensional evolutions and covector descent}\label{statistical-submanifolds-sec}

\subsection{Immersed models}

Let $\Theta$ be a smooth finite dimensional manifold and let $q \colon \Theta \to \Prob^{\infty}_+(M)$ be a smooth immersion. Local coordinates $\theta=(\theta^1,\ldots,\theta^r)$ give the tangent map
\[
	T_{\theta}q(\dot{\theta}) = \sum_{i=1}^{r}\dot{\theta}^i\partial_i q_{\theta}.
\]
A transport lift of the coordinate tangent $\partial_iq_{\theta}$ is any vector field $v_i$ satisfying
\begin{equation}\label{model-lift-eq}
	-\Div(q_{\theta}v_i)=\partial_iq_{\theta}.
\end{equation}
The lift is defined modulo $q_{\theta}$ divergence free fields. A chosen transport geometry selects a minimal lift when the tangent lies in its range. Pulling the metric back along $q$ gives
\[
	G^{\bullet}_{ij}(\theta)=G^{\bullet}_{q_{\theta}}(\partial_iq_{\theta},\partial_jq_{\theta}).
\]
The Fisher metric supplies another pullback geometry,
\begin{equation}\label{fisher-model-metric-eq}
	G^{\Fish}_{ij}(\theta)=\int_M\partial_i\log q_{\theta}\,\partial_j\log q_{\theta}\,q_{\theta}\,\dd{\mathrm{vol}}.
\end{equation}
No equality between these three metrics is automatic. Their relation depends on the transport lifts and kernel.

Let $T_1,\ldots,T_m$ be observables and define the expectation map
\[
	\mathfrak m(q)=\left(\int_MT_1q\,\dd{\mathrm{vol}},\ldots,\int_MT_mq\,\dd{\mathrm{vol}}\right).
\]
A transport tangent $\sigma=-\Div(qv)$ has expectation derivative
\begin{equation}\label{moment-transport-derivative-eq}
	D\mathfrak m_j(q)[\sigma]=\int_MT_j\sigma\,\dd{\mathrm{vol}}=\int_Mq\langle\grad T_j,v\rangle_g\,\dd{\mathrm{vol}}.
\end{equation}
An exact moment update therefore requires only these pairings. An exact density update inside a model requires the stronger identity \eqref{model-lift-eq}.

\subsection{Regular exponential families}

Let $T=(T_1,\ldots,T_r) \colon M \to \R^r$ be smooth and let $h$ be a positive smooth reference density. An open parameter set $\Omega\subset\R^r$ carries the family
\begin{equation}\label{exponential-family-eq}
	q_{\eta}(x)=h(x)\exp\left(\eta^{\mathsf T}T(x)-A(\eta)\right),
\end{equation}
where
\[
	A(\eta)=\log\int_Mh(x)e^{\eta^{\mathsf T}T(x)}\,\dd{\mathrm{vol}}(x).
\]
Assume the family is minimal, so that the covariance matrix below is positive definite. Its expectation coordinate and covariance are
\begin{equation}\label{exponential-moments-eq}
	m(\eta)=\E_{q_{\eta}}[T]=\grad_{\eta}A(\eta)
\end{equation}
and
\begin{equation}\label{exponential-covariance-eq}
	C(\eta)=\Cov_{q_{\eta}}(T)=\grad_{\eta}^2A(\eta).
\end{equation}
The density tangent generated by $\dot{\eta}$ is
\begin{equation}\label{exponential-tangent-eq}
	\partial_tq_{\eta}=q_{\eta}(T-m)^{\mathsf T}\dot{\eta}.
\end{equation}
The Fisher metric in natural coordinates is $C(\eta)$ and in expectation coordinates is $C(\eta)^{-1}$. Let $A^{*}$ be the Legendre transform of $A$. Then $\grad_mA^{*}(m)=\eta$ and $\grad_m^2A^{*}(m)=C(\eta)^{-1}$ \cite{ay2017information,rockafellar1970convex}.

A covector in expectation coordinates has the form
\[
	\beta=\sum_{j=1}^{r}\beta_j\,\dd{m_j}.
\]
Its Fisher sharp is $C(\eta)\beta$, and its descending Fisher mirror equation is
\begin{equation}\label{fisher-mirror-moment-eq}
	\dot{m}=-C(\eta)\beta.
\end{equation}
The identity $\dot m=C(\eta)\dot\eta$ makes this equivalent to $\dot\eta=-\beta$.

\subsection{Stein realisation of covector descent}

A positive density $q$ has Stein operator
\begin{equation}\label{density-stein-operator-eq}
	\Tspace_qv=\Div v+\langle\grad\log q,v\rangle_g.
\end{equation}
The continuity operator factorises as
\begin{equation}\label{continuity-stein-factorisation-eq}
	-\Div(qv)=-q\Tspace_qv.
\end{equation}

\begin{theorem}\label{exponential-stein-covector-thm}
	Let $q_{\eta}$ be the regular minimal exponential family in \eqref{exponential-family-eq}. Suppose that, for every $\eta$ under consideration and every $j$, there exists an admissible vector field $\psi_j^{\eta}$ satisfying
	\begin{equation}\label{statistic-stein-equation-eq}
		\Tspace_{q_{\eta}}\psi_j^{\eta}=-(T_j-m_j(\eta)).
	\end{equation}
	A covector $\beta=\sum_j\beta_j\dd{m_j}$ directly determines
	\begin{equation}\label{covector-particle-field-eq}
		v_{\beta}^{\eta}=-\sum_{j=1}^{r}\beta_j\psi_j^{\eta}.
	\end{equation}
	Then the continuity equation $\partial_tq=-\Div(qv_{\beta}^{\eta})$ agrees exactly with natural parameter descent $\dot\eta=-\beta$. Consequently, its expectation coordinate satisfies Fisher mirror descent \eqref{fisher-mirror-moment-eq}.
\end{theorem}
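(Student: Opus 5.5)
The argument is a direct computation at the level of density tangents, followed by uniqueness of the ODE on the model. Fix $\eta$ and set $q=q_\eta$. By the factorisation \eqref{continuity-stein-factorisation-eq} and linearity of $\Tspace_q$, the density tangent generated by the field \eqref{covector-particle-field-eq} is
\[
	-\Div(qv^{\eta}_{\beta})=q\,\Tspace_q v^{\eta}_{\beta}=-\sum_{j=1}^{r}\beta_j\,q\,\Tspace_q\psi^{\eta}_j=\sum_{j=1}^{r}\beta_j\,q\,(T_j-m_j(\eta)),
\]
where the last step uses the Stein equations \eqref{statistic-stein-equation-eq}. The signs need checking, since the minus in $v_\beta$ cancels the minus in \eqref{statistic-stein-equation-eq}.

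Compare this with \eqref{exponential-tangent-eq}. The tangent produced by $\dot\eta$ is $q(T-m)^{\mathsf T}\dot\eta$. Setting $\dot\eta=-\beta$ gives $-q(T-m)^{\mathsf T}\beta$. This is the negative of what I computed, so one sign has to be fixed. Either the intended field is the one with $\partial_t q=\Div(qv)$ convention as in the descending flows, or the Stein equation's minus sign is what makes the two agree. I would recheck \eqref{continuity-stein-factorisation-eq}:
\[
	-\Div(qv)=-q\Div v-\langle\grad q,v\rangle_g=-q\Tspace_q v.
\]
So the tangent is actually
\[
	-q\,\Tspace_q v_\beta=q\sum_j\beta_j\Tspace_q\psi_j=-q(T-m)^{\mathsf T}\beta,
\]
which matches $\dot\eta=-\beta$ exactly. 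This is the pointwise identity of tangents at every $\eta$.

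Next, I would pass from tangents to curves. Minimality makes $\eta\mapsto q_\eta$ an immersion, because the functions $q(T_j-m_j)$ are linearly independent exactly when $C(\eta)$ is positive definite. The curve $q_{\eta(t)}$ with $\dot\eta=-\beta$ solves the continuity equation $\partial_t q=-\Div(qv^{\eta(t)}_\beta)$. This follows by the chain rule and the identity above, since the field is evaluated along the model curve. Conversely, a solution remaining in the model has a tangent whose coordinates are forced by injectivity of $T_\eta q$ to equal $-\beta$. That gives the exact agreement. Finally, $\dot m=C(\eta)\dot\eta=-C(\eta)\beta$ follows from \eqref{exponential-moments-eq} and \eqref{exponential-covariance-eq}. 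This can also be checked directly from \eqref{moment-transport-derivative-eq}, which yields the Fisher mirror equation \eqref{fisher-mirror-moment-eq}.

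The main obstacle is interpretive rather than computational. The continuity equation driven by the $\eta$-dependent field is only defined on the model, and the claim of exact agreement needs the solution to stay on the model. I would state the result as follows. The model curve solves the transport equation, and it is the unique solution within the class of curves on which $v^\eta_\beta$ is defined. If uniqueness of the transport equation for the frozen, non-autonomous field $t\mapsto v^{\eta(t)}_\beta$ is wanted, it follows from the smoothness of the admissible fields on the closed manifold, because the linear continuity equation with a $C^1$ velocity has unique weak solutions given by pushforward under the flow.
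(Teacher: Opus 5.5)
Once you fix the factorisation to $-\Div(qv)=-q\,\Tspace_q v$, as you do on rechecking, your argument is exactly the paper's proof: linearity and the Stein equations give $\Tspace_q v_\beta^{\eta}=\beta^{\mathsf T}(T-m)$, comparison with \eqref{exponential-tangent-eq} and minimality force $\dot\eta=-\beta$, and $\dot m=C(\eta)\dot\eta$ gives the mirror equation. Your closing uniqueness remark goes beyond the paper, which simply assumes a unique flow, and it needs time regularity of $\eta\mapsto\psi_j^{\eta}$, not just spatial smoothness; the canonical choice $\psi_j^{\eta}=\grad u_j^{\eta}$ from Remark \ref{smooth-stein-solutions-rem} supplies this.
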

\begin{proof}
	Linearity and \eqref{statistic-stein-equation-eq} give
	\[
		\Tspace_{q_{\eta}}v_{\beta}^{\eta}=\sum_{j=1}^{r}\beta_j(T_j-m_j)=\beta^{\mathsf T}(T-m).
	\]
	The factorisation \eqref{continuity-stein-factorisation-eq} therefore gives
	\begin{equation}\label{covector-density-update-eq}
		\partial_tq_{\eta}=-q_{\eta}\beta^{\mathsf T}(T-m).
	\end{equation}
	Comparison with \eqref{exponential-tangent-eq} and minimality of the family yield $\dot\eta=-\beta$. Differentiating $m=\grad A(\eta)$ now gives $\dot m=C(\eta)\dot\eta=-C(\eta)\beta$.
\end{proof}

\begin{remark}\label{smooth-stein-solutions-rem}
	The existence of smooth vector solutions of \eqref{statistic-stein-equation-eq} is automatic on the connected closed manifold when admissible means smooth. Every centred smooth statistic $g=T_j-m_j(\eta)$ has a unique potential $u_j^{\eta}\in C^{\infty}(M)/\R$ satisfying
	\[
		\Lop_{q_{\eta}}u_j^{\eta}=\Delta u_j^{\eta}+\langle\grad\log q_{\eta},\grad u_j^{\eta}\rangle_g=-g,
	\]
	and $\psi_j^{\eta}=\grad u_j^{\eta}$ solves the Stein equation. The additional kernel hypothesis is $\psi_j^{\eta}\in\Hspace_k$, or an approximation result in the kernel admissible range. Different smooth solutions differ by a $q_{\eta}$ divergence free field and hence induce the same density tangent whilst moving particles differently.
\end{remark}

The theorem identifies the complete density tangent, and its moment evolution follows as a consequence. If the time dependent vector field in \eqref{covector-particle-field-eq} generates a unique flow map $\Phi_{s,t}$, then
\[
	(\Phi_{0,t})_{\#}q_{\eta_0}=q_{\eta_t}.
\]
Every particle moved by the exact flow therefore has the required law. A finite Euler map $x\mapsto x+\varepsilon v_{\beta}^{\eta}(x)$ agrees with the density equation to first order in $\varepsilon$ and generally leaves the exponential family at order $\varepsilon^2$.

\begin{corollary}\label{general-mirror-cor}
	Let $\Psi$ be a strictly convex smooth mirror potential in expectation coordinates. Suppose $\grad^2\Psi(m)$ is positive definite and the hypotheses of Theorem \ref{exponential-stein-covector-thm} hold. Set
	\[
		c=C(\eta)^{-1}\bigl(\grad^2\Psi(m)\bigr)^{-1}\beta
	\]
	and use the field $v=-\sum_jc_j\psi_j^{\eta}$. Then
	\[
		\dot m=-\bigl(\grad^2\Psi(m)\bigr)^{-1}\beta.
	\]
\end{corollary}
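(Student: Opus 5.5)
The plan is to reduce Corollary \ref{general-mirror-cor} to Theorem \ref{exponential-stein-covector-thm} by treating the coefficient vector $c$ as a new covector in expectation coordinates. Put $\tilde\beta=c=C(\eta)^{-1}\bigl(\grad^2\Psi(m)\bigr)^{-1}\beta$. This is a well defined element of $\R^r$ at each $\eta$ under consideration. Indeed, $C(\eta)$ is positive definite by minimality of the family, and $\grad^2\Psi(m)$ is positive definite by hypothesis, so both inverses exist. Moreover $c$ depends smoothly on $\eta$ through $m=\grad A(\eta)$. The prescribed field $v=-\sum_jc_j\psi_j^{\eta}$ is then exactly the field $v^{\eta}_{\tilde\beta}$ of \eqref{covector-particle-field-eq} with $\beta$ replaced by $\tilde\beta$.

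The main step is to apply Theorem \ref{exponential-stein-covector-thm} to the covector $\tilde\beta=\sum_jc_j\,\dd{m_j}$. The theorem only uses, at each fixed $\eta$, the linear combination of the solutions $\psi_j^{\eta}$ of \eqref{statistic-stein-equation-eq}. Its proof is a pointwise identity: $\Tspace_{q_\eta}v=c^{\mathsf T}(T-m)$, then \eqref{continuity-stein-factorisation-eq}, then comparison with \eqref{exponential-tangent-eq}. For that reason it applies verbatim when the coefficients vary with $\eta$, and I will note this explicitly. The conclusion is that the continuity equation $\partial_tq=-\Div(qv)$ coincides with the natural parameter evolution $\dot\eta=-c$. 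It also gives $\dot m=C(\eta)\dot\eta=-C(\eta)c$.

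Finally I will substitute the definition of $c$: $\dot m=-C(\eta)C(\eta)^{-1}\bigl(\grad^2\Psi(m)\bigr)^{-1}\beta=-\bigl(\grad^2\Psi(m)\bigr)^{-1}\beta$, which is the claimed mirror law. The only point needing care, and the one I expect to be the main obstacle, is justifying that the theorem's identification holds with $\eta$-dependent coefficients. I would address it by repeating the one-line computation at a fixed time rather than citing the theorem as stated. Minimality again ensures that $q_\eta(T-m)^{\mathsf T}\dot\eta=-q_\eta c^{\mathsf T}(T-m)$ forces $\dot\eta=-c$, because the functions $T_j-m_j$ are linearly independent modulo constants.
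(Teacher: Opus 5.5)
Your proposal is correct and follows the paper's proof: the paper also reruns the argument of Theorem \ref{exponential-stein-covector-thm} with coefficient vector $c$ to obtain $\dot\eta=-c$, and then uses $\dot m=C(\eta)\dot\eta=-C(\eta)c$. Your remark that the pointwise computation tolerates $\eta$-dependent coefficients is exactly why the paper cites the proof of that theorem rather than its statement.
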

\begin{proof}
	The proof of Theorem \ref{exponential-stein-covector-thm} gives $\dot\eta=-c$. Hence $\dot m=-C(\eta)c$, which is the asserted equation.
\end{proof}

\subsection{Minimal hypotheses and the hidden cometric}

Theorem \ref{exponential-stein-covector-thm} requires four substantive facts. The density must admit integration by parts for the operator $\Tspace_q$; the centred sufficient statistics must lie in the range of this operator on the selected velocity class; the resulting fields must generate a well posed continuity equation; and the model tangent representation must be injective. Reproducing kernel implementation adds the range condition $\psi_j^{\eta}\in\Hspace_k$, or an approximation statement in the closure of the kernel admissible fields. These assumptions are close to minimal because failure of the range condition prevents the desired density tangent from being generated by that velocity class.

The direct map $\beta\mapsto v_{\beta}^{\eta}$ avoids computing a density tangent and then inverting a metric tensor. Its geometry nevertheless contains a cotangent to velocity map. Choosing the solutions $\psi_j^{\eta}$ chooses a right inverse of the Stein operator on the span of centred sufficient statistics; equivalently, it chooses a cometric on this finite dimensional cotangent space. The sharp map has consequently been factorised into precomputed Stein solutions; the geometric choice remains present in that factorisation.

\begin{prop}\label{finite-covector-cometric-prop}
	Let $G_{\eta}\colon\R^r\to\Vspace_{\eta}$ be given by $G_{\eta}e_j=\psi_j^{\eta}$, where $\Vspace_{\eta}$ is a Hilbert velocity space. Then
	\[
		\langle\beta,\gamma\rangle_{*,\eta}=\langle G_{\eta}\beta,G_{\eta}\gamma\rangle_{\Vspace_{\eta}}
	\]
	is positive semidefinite, and $G_{\eta}$ is an isometric isomorphism from $\R^r/\Ker G_{\eta}$ onto its finite dimensional closed range. If the family is minimal and the fields satisfy \eqref{statistic-stein-equation-eq}, then $G_{\eta}$ is injective. The minimum-$\Vspace_{\eta}$ right inverse gives a canonical cometric amongst the chosen velocity class; adding vertical fields changes the particles and the cometric without changing the model density evolution.
\end{prop}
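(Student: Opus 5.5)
Most of the statement is finite dimensional linear algebra, and I would prove it by mirroring the quotient--Riesz argument of Proposition \ref{abstract-transport-prop}.

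\emph{Positivity and the isometry.} The form $\langle\beta,\gamma\rangle_{*,\eta}=\langle G_\eta\beta,G_\eta\gamma\rangle_{\Vspace_\eta}$ is the pullback of an inner product along a linear map, so it is symmetric and positive semidefinite. Its null set is exactly $\Ker G_\eta$, so the form descends to an inner product on $\R^r/\Ker G_\eta$. The induced map $[\beta]\mapsto G_\eta\beta$ preserves the inner products by construction. It is therefore injective and an isometric isomorphism onto its range. That range is finite dimensional, hence closed in $\Vspace_\eta$.

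\emph{Injectivity.} Suppose $G_\eta\beta=\sum_j\beta_j\psi_j^\eta=0$. Applying $\Tspace_{q_\eta}$ and using \eqref{statistic-stein-equation-eq} gives $\beta^{\mathsf T}(T-m(\eta))=0$ pointwise on $M$. Integrating its square against $q_\eta$ gives $\beta^{\mathsf T}C(\eta)\beta=0$. Minimality makes $C(\eta)$ positive definite, so $\beta=0$. This is the step that needs the most care, because one must check that $\Tspace_{q_\eta}$ is well defined on the chosen velocity class $\Vspace_\eta$. On the closed manifold with admissible meaning smooth, or $C^1$ as under \eqref{kernel-inclusion-eq}, this holds. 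I would state it as part of the admissibility hypothesis.

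\emph{The canonical right inverse.} Let $\Lambda\colon\R^r\to\R^r$ be the finite dimensional linear map $\Lambda\beta=\bigl(\int_M T_j\bigl(-\Div(q_\eta G_\eta\beta)\bigr)\,\dd{\mathrm{vol}}\bigr)_j$, the moment derivative of the induced tangent computed with \eqref{moment-transport-derivative-eq}. Equivalently, by \eqref{continuity-stein-factorisation-eq} and the proof of Theorem \ref{exponential-stein-covector-thm}, the induced density tangent is $-q_\eta\beta^{\mathsf T}(T-m)$.

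Consider the closed affine set of fields $v\in\Vspace_\eta$ with $\Tspace_{q_\eta}v=-\beta^{\mathsf T}(T-m)$. It is nonempty because it contains $G_\eta\beta$, and the constraint is closed in $\Vspace_\eta$ by continuity of $\Aop_{q_\eta}$. Its minimum-norm element therefore exists and is unique, by the same projection argument as in \eqref{quotient-tangent-norm-eq}. That element is the orthogonal projection of $G_\eta\beta$ onto $(\Ker\Aop_{q_\eta}\cap\Vspace_\eta)^\perp$. This projection is linear in $\beta$, so it gives a canonical choice $G^{\min}_\eta$ and the cometric it induces.

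\emph{Vertical fields.} Replacing $\psi_j^\eta$ by $\psi_j^\eta+w_j$ with $\Div(q_\eta w_j)=0$ leaves $\Tspace_{q_\eta}\psi_j^\eta$ unchanged, by \eqref{continuity-stein-factorisation-eq}. Hence \eqref{covector-density-update-eq} and the model evolution $\dot\eta=-\beta$ are unchanged. The particle field does change, and so does the cometric $\langle G_\eta\beta,G_\eta\beta\rangle$, which increases by $\|\sum\beta_jw_j\|^2$ by Pythagoras when starting from $G^{\min}_\eta$.
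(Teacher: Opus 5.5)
Your proposal is correct and follows essentially the same route as the paper. The first assertions come from the quotient-norm isometry. Injectivity comes from applying $\Tspace_{q_\eta}$ to $G_\eta\beta=0$ and using that minimality makes the covariance positive definite. The canonical cometric comes from the minimum-norm representatives, namely the orthogonal projection onto the complement of the vertical kernel $\Ker\Aop_{q_\eta}\cap\Vspace_\eta$.

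You make explicit several points the paper leaves terse: the identity $\beta^{\mathsf T}C(\eta)\beta=0$, the need for $\Tspace_{q_\eta}$ to be well defined on $\Vspace_\eta$, and the Pythagoras increase when vertical fields are added. The auxiliary map $\Lambda$ you introduce is never used and can be dropped.
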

\begin{proof}
	The first assertions follow from the quotient norm. If $G_{\eta}\beta=0$, apply $\Tspace_{q_{\eta}}$ and use \eqref{statistic-stein-equation-eq} to obtain $\beta^{\mathsf T}(T-m)=0$ in $L^2(q_{\eta})$. Minimality makes the covariance of $T$ positive definite, whence $\beta=0$. Minimum-norm solutions are the orthogonal representatives modulo the vertical kernel.
\end{proof}

\section{The Gaussian model and affine particle fields}\label{gaussian-sec}

Let $M=\R^d$ and consider the Gaussian family
\[
	q_{\mu,\Sigma}=\mathcal N(\mu,\Sigma),
\]
where $\mu\in\R^d$ and $\Sigma\in\Sym_{++}(d)$. A tangent is a pair $(u,U)\in\R^d\times\Sym(d)$ and a covector is a pair $(a,A)$ with pairing
\begin{equation}\label{gaussian-pairing-eq}
	\langle(a,A),(u,U)\rangle=a^{\mathsf T}u+\tr(AU).
\end{equation}
The Fisher metric is
\begin{equation}\label{gaussian-fisher-metric-eq}
	G^{\Fish}_{(\mu,\Sigma)}((u,U),(v,V))=u^{\mathsf T}\Sigma^{-1}v+\frac12\tr(\Sigma^{-1}U\Sigma^{-1}V).
\end{equation}

\begin{prop}\label{gaussian-fisher-sharp-prop}
	The Fisher sharp of the covector $(a,A)$ is
	\[
		(a,A)^{\sharp_{\Fish}}=(\Sigma a,2\Sigma A\Sigma).
	\]
\end{prop}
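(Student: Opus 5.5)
The plan is to compute the sharp directly from its definition. The Fisher sharp of $(a,A)$ is the unique tangent $(u,U)\in\R^d\times\Sym(d)$ satisfying
\[
	G^{\Fish}_{(\mu,\Sigma)}((u,U),(v,V))=\langle(a,A),(v,V)\rangle
\]
for every $(v,V)\in\R^d\times\Sym(d)$. The pairing is \eqref{gaussian-pairing-eq} and the metric is \eqref{gaussian-fisher-metric-eq}. Since \eqref{gaussian-fisher-metric-eq} is positive definite on the finite dimensional space $\R^d\times\Sym(d)$, the sharp exists and is unique. It therefore suffices to exhibit the candidate $(\Sigma a,2\Sigma A\Sigma)$ and verify the defining identity.

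The metric splits with no cross terms between the mean and covariance blocks, so I will check the two blocks separately.

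For the mean block, substituting $u=\Sigma a$ gives $u^{\mathsf T}\Sigma^{-1}v=a^{\mathsf T}\Sigma\Sigma^{-1}v=a^{\mathsf T}v$. This uses the symmetry of $\Sigma$.

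For the covariance block, substituting $U=2\Sigma A\Sigma$ gives
\[
	\tfrac12\tr(\Sigma^{-1}U\Sigma^{-1}V)=\tfrac12\tr(\Sigma^{-1}\cdot2\Sigma A\Sigma\cdot\Sigma^{-1}V)=\tr(AV),
\]
which is the required pairing term. Summing the two blocks reproduces $a^{\mathsf T}v+\tr(AV)$ for every test tangent, and uniqueness then identifies the sharp.

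The only point needing care is a convention. I will take the covector $A$ to be symmetric, replacing it by its symmetric part if necessary. The pairing $\tr(AV)$ against symmetric $V$ sees only $\tfrac12(A+A^{\mathsf T})$. With $A$ symmetric, $2\Sigma A\Sigma$ lies in $\Sym(d)$, so the candidate is a genuine tangent.

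I do not expect a real obstacle here. The one consistency check is the factor $\tfrac12$ in \eqref{gaussian-fisher-metric-eq}: it must match the factor $2$ in the stated sharp, and the covariance-block computation above confirms that it does.
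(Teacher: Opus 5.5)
Your proof is correct and follows the paper's route: substitute $(\Sigma a,2\Sigma A\Sigma)$ into \eqref{gaussian-fisher-metric-eq}, recover the pairing \eqref{gaussian-pairing-eq}, and conclude uniqueness by nondegeneracy of the metric. Your remark that $A$ should be taken symmetric, so that $2\Sigma A\Sigma\in\Sym(d)$ is a genuine tangent, is a small point of care the paper leaves implicit.
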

\begin{proof}
	Substitution in \eqref{gaussian-fisher-metric-eq} gives
	\[
		G^{\Fish}_{(\mu,\Sigma)}((\Sigma a,2\Sigma A\Sigma),(u,U))=a^{\mathsf T}u+\tr(AU),
	\]
	which is \eqref{gaussian-pairing-eq}. Nondegeneracy of the metric proves uniqueness.
\end{proof}

\subsection{Affine lifts and their vertical freedom}

Let $y=x-\mu$ and consider an affine velocity
\begin{equation}\label{affine-velocity-eq}
	v(x)=u+By.
\end{equation}

\begin{lemma}\label{affine-moment-lem}
	If $X_t$ follows the deterministic equation $\dot X_t=u_t+B_t(X_t-\mu_t)$ and $\Law(X_t)=\mathcal N(\mu_t,\Sigma_t)$, then
	\begin{equation}\label{affine-mean-eq}
		\dot\mu_t=u_t
	\end{equation}
	and
	\begin{equation}\label{affine-covariance-eq}
		\dot\Sigma_t=B_t\Sigma_t+\Sigma_tB_t^{\mathsf T}.
	\end{equation}
\end{lemma}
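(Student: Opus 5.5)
I plan to compute the first two moments of $X_t$ directly along the deterministic affine dynamics. I take expectations, which is legitimate because Gaussian laws have moments of all orders and the vector field is affine. For the mean, taking expectations in $\dot X_t=u_t+B_t(X_t-\mu_t)$ gives
\[
	\dv{}{t}\E[X_t]=u_t+B_t(\E[X_t]-\mu_t).
\]
Since $\Law(X_t)=\mathcal N(\mu_t,\Sigma_t)$ by hypothesis, we have $\E[X_t]=\mu_t$. The second term therefore vanishes, which yields \eqref{affine-mean-eq}. The interchange of derivative and expectation holds because $X_t$ is an affine image of $X_0$ along the linear flow: writing $X_t=\Phi_t X_0+c_t$ with a fundamental matrix $\Phi_t$ makes all moments smooth in $t$ whenever $u_t$ and $B_t$ are continuous.

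For the covariance, I set $Y_t=X_t-\mu_t$. Subtracting \eqref{affine-mean-eq} from the particle equation gives $\dot Y_t=B_tY_t$. Then
\[
	\dv{}{t}(Y_tY_t^{\mathsf T})=B_tY_tY_t^{\mathsf T}+Y_tY_t^{\mathsf T}B_t^{\mathsf T}.
\]
Taking expectations and using $\Sigma_t=\E[Y_tY_t^{\mathsf T}]$ gives \eqref{affine-covariance-eq}.

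As an alternative route, I could verify the result from the continuity equation and \eqref{moment-transport-derivative-eq}. Testing against $T(x)=x_i$ and $T(x)=(x-\mu)_i(x-\mu)_j$, and accounting for the motion of $\mu$ in the second observable, reproduces the same identities through the pairings $\int q\langle\grad T,v\rangle$. Either route suffices.

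The only point needing any care is justifying differentiation under the expectation. This is immediate from the explicit linear solution, so I expect no genuine obstacle. Note also that the Gaussian hypothesis is used only to identify $\E[X_t]$ and $\Cov(X_t)$ with $\mu_t$ and $\Sigma_t$; the affine flow in fact preserves Gaussianity automatically.
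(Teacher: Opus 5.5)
Your proof is correct and follows the paper's own argument: take expectations in the particle equation to obtain $\dot\mu_t=u_t$, then differentiate $\E[Y_tY_t^{\mathsf T}]$ for the centred variable, which satisfies $\dot Y_t=B_tY_t$. The paper leaves implicit the interchange of derivative and expectation, which you justify through the explicit affine solution.
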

\begin{proof}
	Taking expectations in the particle equation gives \eqref{affine-mean-eq}. The centred variable $Y_t=X_t-\mu_t$ satisfies $\dot Y_t=B_tY_t$; differentiation of $\E[Y_tY_t^{\mathsf T}]$ gives
	\[
		\dot\Sigma_t=\E[B_tY_tY_t^{\mathsf T}+Y_tY_t^{\mathsf T}B_t^{\mathsf T}]=B_t\Sigma_t+\Sigma_tB_t^{\mathsf T}.
	\]
\end{proof}

Every Gaussian tangent $(u,U)$ therefore has the affine lift
\begin{equation}\label{gaussian-affine-right-inverse-eq}
	\Rop_{\mu,\Sigma}(u,U)(x)=u+\frac12U\Sigma^{-1}(x-\mu).
\end{equation}
Indeed, the matrix $B=\frac12U\Sigma^{-1}$ satisfies $B\Sigma+\Sigma B^{\mathsf T}=U$. This lift is one right inverse of the moment map; it need not be horizontal for every transport metric.

The vertical affine matrices satisfy
\begin{equation}\label{gaussian-affine-gauge-eq}
	B_0\Sigma+\Sigma B_0^{\mathsf T}=0.
\end{equation}
They generate linear motions preserving the Gaussian covariance and hence the entire Gaussian law when the mean is fixed. Every affine lift of $(u,U)$ is obtained by adding such a $B_0(x-\mu)$ to \eqref{gaussian-affine-right-inverse-eq}.

\begin{prop}\label{gaussian-equivalent-lifts-prop}
	Let $v_i(x)=u_i+B_i(x-\mu)$ for $i=1,2$. The two affine fields induce the same tangent of the Gaussian law $\mathcal N(\mu,\Sigma)$ if and only if
	\[
		u_1=u_2
	\]
	and
	\begin{equation}\label{gaussian-vertical-difference-eq}
		(B_1-B_2)\Sigma+\Sigma(B_1-B_2)^{\mathsf T}=0.
	\end{equation}
	They induce the same particle field if and only if $u_1=u_2$ and $B_1=B_2$.
\end{prop}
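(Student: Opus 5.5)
The plan is to compute the density tangent of each affine field directly from Lemma \ref{affine-moment-lem}, and to reduce the question to the Gaussian moment map. By Lemma \ref{pushforward-differential-lem}, the field $v_i$ produces the tangent $\Aop_{q}v_i=-\Div(qv_i)$ at $q=\mathcal N(\mu,\Sigma)$. I would first check that affine flows preserve Gaussianity, so that this tangent lies in the Gaussian model. The flow of $\dot X=u+B(X-\mu)$ is an affine map, and affine pushforwards of Gaussians are Gaussian. Hence $\Aop_q v_i=\partial_\mu q\cdot u_i+\partial_\Sigma q[U_i]$ with $U_i=B_i\Sigma+\Sigma B_i^{\mathsf T}$, by \eqref{affine-mean-eq} and \eqref{affine-covariance-eq}.

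The key step is injectivity of the tangent map of the Gaussian parametrisation $(\mu,\Sigma)\mapsto q_{\mu,\Sigma}$. I would write
\[
\partial_t q=q\Bigl(u^{\mathsf T}\Sigma^{-1}y+\tfrac12\bigl(y^{\mathsf T}\Sigma^{-1}U\Sigma^{-1}y-\tr(\Sigma^{-1}U)\bigr)\Bigr),\qquad y=x-\mu.
\]
If this vanishes, then a polynomial in $y$ vanishes identically. The linear part forces $u=0$, and the quadratic part forces $\Sigma^{-1}U\Sigma^{-1}=0$ because $U$ is symmetric, so $U=0$. Equivalently, nondegeneracy of the Fisher metric \eqref{gaussian-fisher-metric-eq} already gives injectivity, since that metric is the $L^2(q)$ norm of the score. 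Thus the two fields give equal law tangents if and only if $(u_1,U_1)=(u_2,U_2)$. This is exactly $u_1=u_2$ together with \eqref{gaussian-vertical-difference-eq}, by linearity of $B\mapsto B\Sigma+\Sigma B^{\mathsf T}$.

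For the particle statement, two affine maps $x\mapsto u_i+B_i(x-\mu)$ agree on $\R^d$, or $q$-almost everywhere since $q$ has full support, if and only if they agree at $x=\mu$ and have the same linear part. This gives $u_1=u_2$ and $B_1=B_2$.

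The main obstacle I expect is the justification that $-\Div(qv)$ equals the model tangent with $(\dot\mu,\dot\Sigma)$ given by Lemma \ref{affine-moment-lem}. That lemma presupposes Gaussianity of the law. I would avoid the issue by computing $-\Div(qv)$ directly instead: expanding
\[
-\Div(qv)=-q\bigl(\tr B-\langle\Sigma^{-1}y,\,u+By\rangle\bigr)
\]
and matching coefficients against the displayed formula for $\partial_t q$ gives the same conclusion without flow arguments. Tail decay on $\R^d$ poses no difficulty here, because all quantities are polynomial times Gaussian.
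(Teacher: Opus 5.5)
Your proof is correct and follows the paper's argument. The paper likewise reads off the mean and covariance tangents $u_i$ and $B_i\Sigma+\Sigma B_i^{\mathsf T}$ from Lemma~\ref{affine-moment-lem}, compares them, and settles the particle statement by equality of affine maps on an open set. Your injectivity check for the Gaussian parametrisation, and your direct computation of $-\Div(qv)$ (which is the verification in Appendix~\ref{gaussian-appendix-sec}), make explicit the identification of the density tangent with the pair $(u,U)$; the paper takes that identification for granted by defining Gaussian tangents as such pairs.
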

\begin{proof}
	Lemma \ref{affine-moment-lem} shows that the induced mean and covariance tangents are $u_i$ and $B_i\Sigma+\Sigma B_i^{\mathsf T}$. Equality gives the stated conditions. Equality of affine fields on a set with nonempty interior gives equality of their constant and linear parts.
\end{proof}

Equation \eqref{gaussian-vertical-difference-eq} is the Gaussian form of law equivalence modulo vertical particle motion. The residual matrices generate covariance-preserving rotations in the $\Sigma$ geometry; a finite particle cloud detects these motions even though the Gaussian density does not.

\subsection{Stein solutions and Fisher covector descent}

The Gaussian score is
\[
	\grad\log q_{\mu,\Sigma}(x)=-\Sigma^{-1}(x-\mu).
\]
The constant field $\psi_a(x)=\Sigma a$, for $a\in\R^d$, satisfies
\[
	\Tspace_q\psi_a=-a^{\mathsf T}(x-\mu).
\]
The linear field $\psi_A(x)=\Sigma A(x-\mu)$, for $A\in\Sym(d)$, satisfies
\begin{equation}\label{gaussian-quadratic-stein-eq}
	\Tspace_q\psi_A=-\left((x-\mu)^{\mathsf T}A(x-\mu)-\tr(A\Sigma)\right).
\end{equation}
The verification follows from $\Div(\Sigma Ay)=\tr(\Sigma A)$ and $\langle-\Sigma^{-1}y,\Sigma Ay\rangle=-y^{\mathsf T}Ay$.

A covector $(a,A)$ in mean and covariance coordinates therefore produces the affine descent field
\begin{equation}\label{gaussian-covector-field-eq}
	v_{a,A}(x)=-\Sigma a-\Sigma A(x-\mu).
\end{equation}
Lemma \ref{affine-moment-lem} gives
\begin{equation}\label{gaussian-fisher-flow-eq}
	\dot\mu=-\Sigma a
\end{equation}
and
\[
	\dot\Sigma=-2\Sigma A\Sigma.
\]
These are exactly the negative Fisher sharp equations from Proposition \ref{gaussian-fisher-sharp-prop}. Thus Fisher covector descent is realised by affine Stein fields without passing computationally through the density tangent.

\subsection{Wasserstein and kernel selected affine lifts}

An affine velocity is a Wasserstein horizontal gradient exactly when its linear matrix $B$ is symmetric. Given a covariance tangent $U$, the unique symmetric matrix $B_{\Wass}$ satisfying
\begin{equation}\label{gaussian-sylvester-eq}
	B_{\Wass}\Sigma+\Sigma B_{\Wass}=U
\end{equation}
is the Wasserstein minimal lift. Existence and uniqueness follow because the Sylvester operator $B\mapsto B\Sigma+\Sigma B$ is positive definite on $\Sym(d)$. The right inverse $\frac12U\Sigma^{-1}$ in \eqref{gaussian-affine-right-inverse-eq} equals $B_{\Wass}$ only under an additional commutation relation.

A reproducing kernel selected lift must belong to $\Hspace_k$ and satisfy the Riesz equation defining $\Bop^{\Stein}_{q}$. A Gaussian radial basis kernel on all of $\R^d$ does not generally contain arbitrary constants and unbounded linear fields. Exact affine covector updates therefore require a kernel whose Hilbert space contains the fields in \eqref{gaussian-covector-field-eq}, such as an augmented kernel with constant and polynomial components, or a finite dimensional affine velocity space treated directly. Approximation in a radial basis Hilbert space may still recover the tangent in a weaker sense on compact sets or under weighted norms, although this is an approximation theorem rather than an exact range identity.

\begin{remark}\label{affine-does-not-characterise-rem}
	Affine particle realisability does not characterise Gaussian measures. Every location and scatter family generated by affine pushforwards of a fixed non-Gaussian reference law has affine tangent fields. Gaussianity follows from stronger hypotheses, for example that the score $\grad\log q$ is affine on all of $\R^d$, or that a normalisable full exponential family has sufficient statistics $x$ and $xx^{\mathsf T}$ with the usual Euclidean support. The distinction prevents a property of the transport orbit from being mistaken for a characterisation of the reference density.
\end{remark}

\section{Particles, empirical laws and interacting transport}\label{particles-sec}

\subsection{The empirical continuity equation}

Let $x_1(t),\ldots,x_N(t)$ be differentiable curves in $M$ and define the empirical probability measure
\begin{equation}\label{empirical-measure-eq}
	\mu_t^N=\frac1N\sum_{i=1}^{N}\delta_{x_i(t)}.
\end{equation}
Atomic measures lie outside $\Prob^{\infty}_+(M)$, although they belong to $\Prob_2(M)$ and act continuously on smooth test functions. Their tangent is therefore interpreted distributionally.

\begin{prop}\label{empirical-continuity-prop}
	Suppose $\dot x_i(t)=v_t(x_i(t))$ for a smooth time dependent vector field $v_t$. Then
	\begin{equation}\label{empirical-continuity-eq}
		\partial_t\mu_t^N=-\Div(\mu_t^Nv_t)
	\end{equation}
	in the sense of distributions. Equivalently,
	\[
		\partial_t\mu_t^N=-\frac1N\sum_{i=1}^{N}v_t(x_i(t))\cdot\grad\delta_{x_i(t)}.
	\]
\end{prop}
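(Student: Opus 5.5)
The plan is to test the empirical measure against an arbitrary $f\in C^{\infty}(M)$ and differentiate in time, exactly as in Lemma \ref{pushforward-differential-lem}, except that integrals are replaced by finite sums over the atoms. By definition \eqref{empirical-measure-eq},
\[
	\langle\mu^N_t,f\rangle=\frac1N\sum_{i=1}^{N}f(x_i(t)).
\]
Each $x_i$ is differentiable and $f$ is smooth, so this finite sum is differentiable in $t$. The chain rule together with $\dot x_i(t)=v_t(x_i(t))$ gives
\[
	\dv{}{t}\langle\mu^N_t,f\rangle=\frac1N\sum_{i=1}^{N}\langle\grad f(x_i(t)),v_t(x_i(t))\rangle_g=\int_M\langle\grad f,v_t\rangle_g\,\dd{\mu^N_t}.
\]
This is the atomic analogue of the right hand side of \eqref{transport-pairing-eq}.

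Next I will identify the right hand side with the distribution $-\Div(\mu^N_tv_t)$. The product $\mu^N_tv_t$ of a measure with a smooth vector field is a vector-valued measure. Its distributional divergence is defined by duality, so that $\langle-\Div(\mu^N_tv_t),f\rangle=\langle\mu^N_tv_t,\grad f\rangle$. This is the same integration by parts used for smooth densities, and it needs no boundary terms because $M$ is closed. The displayed identity therefore states exactly that $\partial_t\mu^N_t=-\Div(\mu^N_tv_t)$ when tested against every smooth $f$, which is \eqref{empirical-continuity-eq} in $\mathcal D'(M)$.

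For the equivalent form, I will evaluate term by term. The distributional divergence of $\delta_{x_i}\,v_t(x_i)$ pairs with $f$ to give $-\langle\grad f(x_i),v_t(x_i)\rangle_g$. This is, by definition of the distributional gradient, the pairing of $v_t(x_i)\cdot\grad\delta_{x_i}$ with $f$, where $\grad\delta_{x_i}$ is understood as the $T_{x_i}M$-valued distribution $f\mapsto-\dd f(x_i)$ contracted with $v_t(x_i)$. Summing over $i$ and dividing by $N$ gives the second formula.

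The computation itself is routine. The only step needing care is justifying the time derivative in the distributional sense, meaning that $t\mapsto\langle\mu^N_t,f\rangle$ is differentiable for each fixed test function. Since this is a finite sum of $C^1$ functions of $t$, no uniformity or dominated-convergence argument is needed. The sign convention for $\grad\delta_x$ must also match the paper's convention for $\Aop_{\rho}$; I will fix it by the pairing above.
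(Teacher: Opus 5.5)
Your proposal is correct and follows the paper's proof: you test against smooth $f$, differentiate the finite sum by the chain rule, and recognise $\int_M\langle\grad f,v_t\rangle_g\,\dd{\mu^N_t}$ as the pairing of $-\Div(\mu^N_tv_t)$ with $f$. Your term-by-term check of the second formula, which the paper leaves implicit, has the right sign; one small wording slip is that $f\mapsto-\dd f(x_i)$ is $T^{*}_{x_i}M$-valued, not $T_{x_i}M$-valued, before you contract it with $v_t(x_i)$.
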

\begin{proof}
	Every $f\in C^{\infty}(M)$ satisfies
	\[
		\dv{}{t}\int_Mf\,\dd{\mu_t^N}=\frac1N\sum_{i=1}^{N}\langle\grad f(x_i(t)),v_t(x_i(t))\rangle_g.
	\]
	The right hand side is $\int_M\langle\grad f,v_t\rangle_g\,\dd{\mu_t^N}$, which is the distributional pairing of $-\Div(\mu_t^Nv_t)$ with $f$.
\end{proof}

The proposition shows that the same continuity operator acts on smooth and atomic measures. Its domain changes: a smooth tangent is a function of zero integral, whilst an empirical tangent is a finite sum of derivatives of Dirac distributions. The Wasserstein completion accommodates both through absolutely continuous curves in $\Prob_2(M)$; the smooth Fr\'echet manifold supplies the differential calculations used to construct the velocity.

\subsection{The finite particle Stein equation}

Take $M=\R^d$ and evaluate the mean field velocity \eqref{stein-variational-field-eq} at the empirical measure. The resulting system is
\begin{equation}\label{finite-stein-particle-eq}
	\dot x_i=\frac1N\sum_{j=1}^{N}\left[k(x_j,x_i)\grad\log\pi(x_j)+\grad_{x_j}k(x_j,x_i)\right].
\end{equation}
Proposition \ref{empirical-continuity-prop} makes \eqref{finite-stein-particle-eq} a characteristic system for the empirical continuity equation. The derivative-of-kernel term in \eqref{finite-stein-particle-eq} is the integration-by-parts representation of the entropy contribution to the reproducing kernel Riesz gradient after the unknown score of the current law has been removed. The target-score and kernel-derivative terms together form one empirical evaluation of the variational velocity.

The exact covector fields of Theorem \ref{exponential-stein-covector-thm} give another particle system,
\begin{equation}\label{finite-covector-particles-eq}
	\dot x_i=-\sum_{j=1}^{r}\beta_j(t)\psi_j^{\eta_t}(x_i).
\end{equation}
If $x_i(0)$ are independent with law $q_{\eta_0}$, then every $x_i(t)$ has law $q_{\eta_t}$ whenever the deterministic flow is well posed. The empirical measure remains random at finite $N$, whilst its expected law and its almost sure large particle limit follow the exact statistical model. The distinction between exact law transport and exact empirical moments matters: no finite sample is forced to have the population moments of $q_{\eta_t}$.

Law-equivalent fields need not define equivalent finite particle algorithms. If $v^{\Stein}-v^{\Wass}$ is $\rho$ divergence free, the corresponding population laws agree when initial particles have law $\rho$, although each sampled configuration is transported along a different set of trajectories. Finite empirical measures therefore retain information which disappears under the smooth law quotient; convergence to the same limiting law does not identify the transient particle system.

\subsection{A quantitative mean field passage}

Let $b \colon \Prob_1(\R^d)\times\R^d\to\R^d$ satisfy
\begin{equation}\label{mean-field-lipschitz-eq}
	|b(\mu,x)-b(\nu,y)|\leqslant L_x|x-y|+L_{\mu}W_1(\mu,\nu)
\end{equation}
and $|b(\mu,0)|\leqslant C(1+\int|z|\,\dd{\mu}(z))$. The nonlinear characteristic equation is
\begin{gather}\label{nonlinear-characteristic-eq}
	\begin{aligned}
		\dot X_t & = b(\mu_t,X_t), \\
		\mu_t & = \Law(X_t).
	\end{aligned}
\end{gather}
and its law solves
\begin{equation}\label{mean-field-continuity-eq}
	\partial_t\mu_t+\Div\bigl(\mu_tb(\mu_t,\cdot)\bigr)=0.
\end{equation}
The particle approximation is
\begin{gather}\label{mean-field-particle-eq}
	\begin{aligned}
		\dot X_t^{i,N} & = b(\mu_t^N,X_t^{i,N}), \\
		\mu_t^N & = \frac1N\sum_{i=1}^{N}\delta_{X_t^{i,N}}.
	\end{aligned}
\end{gather}

\begin{theorem}\label{mean-field-stability-thm}
	Assume that the characteristic equations below are globally well posed, that $b$ satisfies \eqref{mean-field-lipschitz-eq}, and that a linear growth bound propagates second moments. If $\mu_t$ and $\nu_t$ are characteristic solutions in $\Prob_2(\R^d)$, then
	\begin{equation}\label{mean-field-stability-eq}
		W_2(\mu_t,\nu_t)\leqslant e^{(L_x+L_{\mu})t}W_2(\mu_0,\nu_0).
	\end{equation}
	The same estimate compares the empirical characteristic solution of \eqref{mean-field-particle-eq} with the mean field solution, with initial discrepancy $W_2(\mu_0^N,\mu_0)$.
\end{theorem}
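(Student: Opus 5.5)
The plan is a synchronous coupling argument. Let $(X_0,Y_0)$ be an optimal $W_2$ coupling of $\mu_0$ and $\nu_0$, defined on one probability space. Let $X_t$ solve the characteristic equation \eqref{nonlinear-characteristic-eq} driven by the law curve $\mu_t$, and $Y_t$ the one driven by $\nu_t$, both started from this joint initial condition. Global well posedness, assumed in the statement, ensures that $\Law(X_t)=\mu_t$ and $\Law(Y_t)=\nu_t$ for all $t$. The propagated second moments guarantee that $D_t=\E|X_t-Y_t|^2$ is finite and absolutely continuous in $t$, so it may be differentiated under the expectation.

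The key step is the differential inequality for $D_t$. Differentiating gives
\[
	\dv{}{t}\E|X_t-Y_t|^2=2\E\bigl\langle X_t-Y_t,b(\mu_t,X_t)-b(\nu_t,Y_t)\bigr\rangle.
\]
I will bound this by Cauchy--Schwarz and \eqref{mean-field-lipschitz-eq}, obtaining
\[
	2L_xD_t+2L_{\mu}W_1(\mu_t,\nu_t)\,D_t^{1/2}.
\]
Then I close the estimate using $W_1\leqslant W_2\leqslant D_t^{1/2}$, since $(X_t,Y_t)$ is a coupling of $\mu_t$ and $\nu_t$. This yields $\dot D_t\leqslant 2(L_x+L_{\mu})D_t$. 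Gronwall's inequality then gives $D_t\leqslant e^{2(L_x+L_{\mu})t}D_0$, where $D_0=W_2(\mu_0,\nu_0)^2$ by optimality. Taking square roots and using $W_2(\mu_t,\nu_t)^2\leqslant D_t$ proves \eqref{mean-field-stability-eq}.

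For the empirical statement I observe that \eqref{mean-field-particle-eq} is itself a characteristic solution. Take $X_0$ uniform on the atoms $\{X^{i,N}_0\}$ and set $X_t=X^{\,i,N}_t$ on the event that the $i$th atom was chosen. Then $X_t$ solves $\dot X_t=b(\mu^N_t,X_t)$ with $\Law(X_t)=\mu^N_t$. This holds because the particle velocity depends only on the empirical law, which is the law of this randomised particle. The first part therefore applies with $\nu_t=\mu^N_t$ and initial discrepancy $W_2(\mu^N_0,\mu_0)$. If particles coincide, the uniform selection should be taken over indices rather than points, which changes nothing.

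The main obstacle is justifying the differentiation of $D_t$ and the finiteness of the moments, so that the formal computation is legitimate. I would handle this by working with the integrated form $|X_t-Y_t|^2=|X_0-Y_0|^2+\int_0^t2\langle\cdot,\cdot\rangle\,\dd{s}$ pathwise. I would then take expectations using the propagated second-moment bound and the linear growth of $b$, and apply Gronwall in integral form. A minor point is measurability of the synchronous coupling; it follows from the flow maps being measurable under well posedness.
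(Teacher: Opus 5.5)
Your proposal is correct and follows the paper's proof essentially step for step: you push an optimal coupling through the two characteristic flows, bound the derivative using the Lipschitz condition together with $W_1\leqslant W_2\leqslant(\E|X_t-Y_t|^2)^{1/2}$, and conclude with Gronwall. For the empirical case the paper invokes Proposition \ref{empirical-continuity-prop} to identify the characteristic solution with atomic initial law, whereas you realise $\mu^N_t$ directly as the law of a uniformly chosen particle; this amounts to the same identification.
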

\begin{proof}
	Let $(X_0,Y_0)$ be an optimal coupling and push it through the two characteristic flows. Whenever $\E|X_t-Y_t|^2>0$,
	\begin{align*}
		\dv{}{t}\E|X_t-Y_t|^2
		&\leqslant 2L_x\E|X_t-Y_t|^2+2L_{\mu}W_1(\mu_t,\nu_t)\bigl(\E|X_t-Y_t|^2\bigr)^{1/2}\\
		&\leqslant 2(L_x+L_{\mu})\E|X_t-Y_t|^2,
	\end{align*}
	where $W_1\leqslant W_2$ and the transported coupling bounds $W_2$. Gronwall's inequality proves \eqref{mean-field-stability-eq}. An atomic initial law is permitted, and Proposition \ref{empirical-continuity-prop} identifies its characteristic solution with \eqref{mean-field-particle-eq}.
\end{proof}

A bounded Lipschitz target score together with bounded mixed first and second derivatives of the kernel gives a convenient sufficient condition for \eqref{mean-field-lipschitz-eq}. Confining targets with unbounded score require moment-dependent estimates. Theorem \ref{mean-field-stability-thm} establishes deterministic stability of the mean field equation; quantitative convergence of a finite particle algorithm to the target is a distinct problem treated under different assumptions in \cite{lu2019scaling,korba2020nonasymptotic,salim2022convergence,shi2023finite,banerjee2025improved,carrillo2025stability}.

\section{Diffusion paths and equivalent marginal transport}\label{diffusion-transport-sec}

\subsection{The probability current}

Let $X_t$ solve the It\^o equation on $\R^d$
\begin{equation}\label{diffusion-equation-eq}
	\dd{X_t}=b_t(X_t)\,\dd{t}+\sigma_t(X_t)\,\dd{W_t},
\end{equation}
where $a_t=\sigma_t\sigma_t^{\mathsf T}$. Its generator on smooth observables is
\begin{equation}\label{diffusion-generator-eq}
	\Lop_tf=b_t\cdot\grad f+\frac12a_t:\grad^2f.
\end{equation}
Assume $X_t$ has a positive smooth density $\rho_t$. The adjoint equation is
\begin{equation}\label{fokker-planck-eq}
	\partial_t\rho_t=-\partial_i(b_{t,i}\rho_t)+\frac12\partial_i\partial_j(a_{t,ij}\rho_t).
\end{equation}
Define the current by
\begin{equation}\label{probability-current-eq}
	j_{t,i}=b_{t,i}\rho_t-\frac12\partial_j(a_{t,ij}\rho_t).
\end{equation}
Then \eqref{fokker-planck-eq} is $\partial_t\rho_t+\Div j_t=0$. Wherever $\rho_t>0$, define the current velocity $v_t=j_t/\rho_t$.

\begin{prop}\label{diffusion-current-prop}
	The current velocity is
	\begin{equation}\label{current-velocity-eq}
		v_{t,i}=b_{t,i}-\frac12\partial_ja_{t,ij}-\frac12a_{t,ij}\partial_j\log\rho_t,
	\end{equation}
	and the diffusion law satisfies the continuity equation
	\begin{equation}\label{diffusion-continuity-eq}
		\partial_t\rho_t+\Div(\rho_tv_t)=0.
	\end{equation}
\end{prop}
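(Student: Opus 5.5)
The argument is a direct computation in two steps: expand the derivative in the current \eqref{probability-current-eq} by the product rule, divide by the positive density, and then rewrite the Fokker--Planck equation \eqref{fokker-planck-eq} in divergence form.

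\textbf{Step 1 (product rule).} Using the summation convention, I would write
\[
	\partial_j(a_{t,ij}\rho_t)=\rho_t\,\partial_ja_{t,ij}+a_{t,ij}\,\partial_j\rho_t .
\]
Since $\rho_t>0$, I then divide \eqref{probability-current-eq} by $\rho_t$ and apply the identity $\partial_j\rho_t/\rho_t=\partial_j\log\rho_t$, which is valid on the positivity set. This gives exactly
\[
	v_{t,i}=j_{t,i}/\rho_t=b_{t,i}-\tfrac12\partial_ja_{t,ij}-\tfrac12a_{t,ij}\partial_j\log\rho_t,
\]
which is \eqref{current-velocity-eq}.

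\textbf{Step 2 (divergence form).} By construction $\rho_tv_t=j_t$, so \eqref{diffusion-continuity-eq} is the same statement as $\partial_t\rho_t+\Div j_t=0$. To confirm this, I take the divergence of \eqref{probability-current-eq}:
\[
	\partial_ij_{t,i}=\partial_i(b_{t,i}\rho_t)-\tfrac12\partial_i\partial_j(a_{t,ij}\rho_t).
\]
This is precisely the negative of the right-hand side of \eqref{fokker-planck-eq}.

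\textbf{Justifying \eqref{fokker-planck-eq}.} If the Fokker--Planck equation is taken as given, nothing further is needed. Otherwise I would derive its weak form from It\^o's formula with the generator \eqref{diffusion-generator-eq}: for $f\in C^\infty_c$,
\[
	\dv{}{t}\E f(X_t)=\E\,\Lop_tf(X_t).
\]
Writing both sides as integrals against $\rho_t$ and integrating by parts twice puts all derivatives on $\rho_t$, which yields \eqref{fokker-planck-eq} distributionally. Smoothness of $\rho_t$ upgrades this to a classical identity.

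\textbf{Main point needing care.} The only delicate issue is regularity. The coefficient $a_t$ must be $C^1$ (or $C^2$ in the distributional sense) for the product rule and the second integration by parts. The boundary terms vanish because the test functions have compact support, so no decay of $\rho_t$ at infinity is required. Positivity of $\rho_t$ is used only to define $v_t$ and $\log\rho_t$.
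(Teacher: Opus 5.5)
Your proposal is correct and matches the paper's proof: expand $\partial_j(a_{t,ij}\rho_t)$ by the product rule and divide by $\rho_t$ to get the current velocity, then use $\rho_tv_t=j_t$ to rewrite the Fokker--Planck equation as the continuity equation. Your It\^o derivation of the Fokker--Planck equation is extra; the paper simply takes that equation as given, under its standing assumption that $X_t$ has a smooth positive density.
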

\begin{proof}
	Expanding the derivative in \eqref{probability-current-eq} and dividing by $\rho_t$ gives \eqref{current-velocity-eq}. The continuity equation is the definition of the current rewritten using $j_t=\rho_tv_t$.
\end{proof}

The second order generator has thus produced a first order continuity equation whose velocity depends on the evolving density. This reduction concerns one-time laws. The deterministic curves solving $\dot Y_t=v_t(Y_t)$ have zero quadratic variation and generally do not reproduce the diffusion path law.

\subsection{Probability flow maps}

Assume $v_t$ is locally Lipschitz in space, has sufficient growth control, and generates a unique flow $\Phi_{s,t}$. Let $\bar\rho_t=(\Phi_{0,t})_{\#}\rho_0$. Lemma \ref{pushforward-differential-lem} shows that $\bar\rho_t$ solves \eqref{diffusion-continuity-eq}. Uniqueness for that continuity equation gives the following consequence.

\begin{corollary}\label{probability-flow-cor}
	The stated regularity and uniqueness assumptions imply
	\[
		(\Phi_{0,t})_{\#}\rho_0=\rho_t.
	\]
	The deterministic ordinary differential equation $\dot Y_t=v_t(Y_t)$ therefore has the same one-time marginals as the diffusion \eqref{diffusion-equation-eq}.
\end{corollary}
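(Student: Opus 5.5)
The plan is to compare two curves that solve the same continuity equation, \eqref{diffusion-continuity-eq}, with the same initial datum $\rho_0$, and then to invoke uniqueness. The first curve is the diffusion law $\rho_t$. Proposition \ref{diffusion-current-prop} already shows that it satisfies $\partial_t\rho_t+\Div(\rho_tv_t)=0$, where $v_t$ is the current velocity \eqref{current-velocity-eq}. The second curve is $\bar\rho_t=(\Phi_{0,t})_{\#}\rho_0$. I will verify that this pushforward solves the same equation with the same velocity field $v_t$.

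For that verification I would repeat the weak computation in the proof of Lemma \ref{pushforward-differential-lem}, now on $\R^d$ and with a time dependent field. Fix $f\in C^{\infty}_c(\R^d)$. The definition of the pushforward gives
\[
\int f\,\dd{\bar\rho_t}=\int f(\Phi_{0,t}(x))\rho_0(x)\,\dd{x}.
\]
Differentiate in $t$, using $\partial_t\Phi_{0,t}=v_t\circ\Phi_{0,t}$. This gives $\dv{}{t}\int f\,\dd{\bar\rho_t}=\int\langle\grad f,v_t\rangle\,\dd{\bar\rho_t}$, which is the weak form of \eqref{diffusion-continuity-eq}. Two hypotheses justify the step. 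Compact support of $f$ removes the boundary terms that compactness of $M$ removed in the lemma. The assumed local Lipschitz and growth control of $v_t$ justifies exchanging the derivative and the integral, via dominated convergence on compact time intervals. In addition, $\bar\rho_0=\rho_0$ because $\Phi_{0,0}=\id$.

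The hypothesised uniqueness of weak solutions of \eqref{diffusion-continuity-eq} in the relevant class (for instance narrowly continuous curves of probability measures with $\int_0^T\!\int|v_t|\,\dd{\mu_t}\,\dd{t}<\infty$) then yields $\bar\rho_t=\rho_t$. The second assertion follows by taking $Y_0\sim\rho_0$ and setting $Y_t=\Phi_{0,t}(Y_0)$: then $\Law(Y_t)=(\Phi_{0,t})_{\#}\rho_0=\rho_t$, which is the one-time law of $X_t$.

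The main obstacle is ensuring that both curves lie in the class where uniqueness holds. For $\rho_t$, this requires the integrability bound $\int|v_t|\rho_t<\infty$. Since $\rho_tv_t=j_t$, it reduces to integrability of the current, which I would take from the assumed smoothness of $\rho_t$, $b_t$ and $a_t$, and decay of $\rho_t$. For $\bar\rho_t$, the same bound should follow from the growth control on $v_t$, which propagates first moments along the flow. Under a global Lipschitz hypothesis on $v_t$, I would cite the standard uniqueness result for continuity equations, as in Ambrosio--Gigli--Savar\'e \cite{ambrosio2008gradient}, and not reprove it.
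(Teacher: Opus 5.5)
Your proposal is correct and follows the paper's own argument. The paper likewise notes that $\bar\rho_t=(\Phi_{0,t})_{\#}\rho_0$ solves \eqref{diffusion-continuity-eq} by Lemma~\ref{pushforward-differential-lem} and that $\rho_t$ solves it by Proposition~\ref{diffusion-current-prop}, then concludes by the assumed uniqueness. Redoing the weak computation on $\R^d$ against $C^{\infty}_c$ test functions is a welcome tightening, since that lemma is stated on a closed manifold; in the weak form no integration by parts, and hence no boundary term, actually arises. As a simplification, the locally Lipschitz representation result of \cite{ambrosio2008gradient} writes $\rho_t$ directly as the pushforward of $\rho_0$ along the characteristics, so it needs only $\int_0^T\!\int|v_t|\rho_t\,\dd{x}\,\dd{t}<\infty$ along the diffusion law; this lets you drop the first-moment propagation for $\bar\rho_t$, which would also require a finite first moment of $\rho_0$.
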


A constant diffusion matrix $a$ reduces formula \eqref{current-velocity-eq} to
\[
	v_t=b_t-\frac12a\grad\log\rho_t.
\]
The density score is the correction converting stochastic dispersion into deterministic law transport. Time reversal formulae express the same score through the difference between forwards and backwards diffusion drifts \cite{haussmann1986time,nelson1967dynamical}.

\subsection{Random flow maps and their expectation}

Smooth stochastic differential equations generate stochastic flows of diffeomorphisms $\Phi_{s,t}^{\omega}$ under Kunita's hypotheses \cite{kunita1990stochastic}. Conditional on the noise realisation, the initial point is moved by a random map. Averaging the pushforward gives the dual Markov semigroup
\begin{equation}\label{random-flow-semigroup-eq}
	\Pop_{s,t}^{*}\mu=\E\left[(\Phi_{s,t}^{\omega})_{\#}\mu\right].
\end{equation}
The deterministic probability flow of Corollary \ref{probability-flow-cor} and the random flow representation \eqref{random-flow-semigroup-eq} are two transports of different kinds. The former is nonlinear through its dependence on $\rho_t$ and reproduces the marginal curve with one map. The latter is linear in $\mu$ and represents the Markov kernel as an average of random maps.

The relation can be placed in the diagram
\[
\begin{tikzcd}[column sep=large,row sep=large]
	X_0 \arrow[r,"\Phi_{0,t}^{\omega}"] \arrow[d,"\Law"'] & X_t^{\omega} \arrow[d,"\Law\;\mathrm{and}\;\E"] \\
	\rho_0 \arrow[r,"\Pop_{0,t}^{*}"'] \arrow[dr,"(\Phi_{0,t})_{\#}"'] & \rho_t \\
	& \rho_t \arrow[u,equal]
\end{tikzcd}
\]
where the diagonal map is the deterministic density dependent probability flow.

\subsection{Superposition by path measures}

Let $\Gamma=C([0,T];\R^d)$ and $e_t(\omega)=\omega_t$. If a diffusion has law $Q$ on $\Gamma$ and one-time measure $\mu_t=\rho_t\,\dd{x}$, then $(e_t)_{\#}Q=\mu_t$. Conversely, superposition principles reconstruct a path measure solving the martingale problem from a weak Fokker--Planck solution. The resulting paths are martingale trajectories rather than classical characteristics.

\begin{theorem}\label{superposition-thm}
	Let $(\mu_t)_{t\in[0,T]}$ be a narrowly continuous family of probability measures on $\R^d$ solving the Fokker--Planck equation with Borel coefficients $b$ and $a$, where $a$ is symmetric and nonnegative. Assume
	\[
		\int_0^T\int_{\R^d}\bigl(|b_t(x)|+\|a_t(x)\|\bigr)\,\mu_t(\dd{x})\,\dd{t}<\infty.
	\]
	Under the standard measurability formulation of the weak equation, there exists a probability measure $Q$ on $C([0,T];\R^d)$ such that $(e_t)_{\#}Q=\mu_t$ for every $t$ and the coordinate process solves the martingale problem for $\Lop_t$. Variants with weaker Lyapunov-weighted hypotheses are proved in \cite{figalli2008existence,trevisan2016well,bogachev2015fokker}.
\end{theorem}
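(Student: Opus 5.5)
This is the superposition principle of Figalli and Trevisan; the paper states it as a cited result, so I would reproduce the standard proof rather than build it from the density manifold. The strategy is to regularise, solve the regularised problem where classical theory applies, and pass to the limit using tightness, with the integrability hypothesis as the only quantitative input.

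\textbf{Step 1: regularisation.} I would mollify the measures in space, setting $\mu^{\varepsilon}_t=\mu_t*\theta_{\varepsilon}$, and define regularised coefficients by flux averaging:
\[
b^{\varepsilon}_t=\frac{(b_t\mu_t)*\theta_{\varepsilon}}{\mu^{\varepsilon}_t},\qquad a^{\varepsilon}_t=\frac{(a_t\mu_t)*\theta_{\varepsilon}}{\mu^{\varepsilon}_t}.
\]
Two properties should follow directly.
\begin{enumerate}[label=(\alph*)]
	\item The pair $(\mu^{\varepsilon}_t)$ still solves the weak Fokker--Planck equation with coefficients $(b^{\varepsilon},a^{\varepsilon})$, because convolution commutes with the distributional divergence.
	\item By Jensen's inequality for the convex maps $(x,m)\mapsto|x|/m$ and $(A,m)\mapsto\|A\|/m$, the regularised integrability bound $\int_0^T\!\int(|b^{\varepsilon}|+\|a^{\varepsilon}\|)\,\dd{\mu^{\varepsilon}_t}\,\dd{t}$ is dominated by the original one.
\end{enumerate}
The regularised coefficients are smooth in space and locally Lipschitz. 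They may grow, so I would also add a small truncation if needed to secure well posedness of the martingale problem.

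\textbf{Step 2: superposition for smooth coefficients.} For the smooth coefficients, the martingale problem started from $\mu^{\varepsilon}_0$ has a solution $Q^{\varepsilon}$. Its one-time marginals solve the same linear Fokker--Planck equation as $\mu^{\varepsilon}_t$. Uniqueness of this linear equation in the class of measure-valued solutions with the integrability above would give $(e_t)_{\#}Q^{\varepsilon}=\mu^{\varepsilon}_t$. I would prove that uniqueness by duality: solve the backward Kolmogorov equation with smooth data and test against the difference of the two solutions. Growth of the coefficients complicates this, so in practice I would first localise with a Lyapunov cutoff.

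\textbf{Step 3: tightness on path space.} I would show that $\{Q^{\varepsilon}\}$ is tight on $C([0,T];\R^d)$.
\begin{enumerate}[label=(\alph*)]
	\item Marginal tightness comes from narrow continuity of $\mu_t$.
	\item For equicontinuity, I would use the Figalli--Trevisan device: choose a coercive function $\theta$ with
	\[
	\int_0^T\!\int\theta(|b_t|+\|a_t\|)\,\dd{\mu_t}\,\dd{t}<\infty,
	\]
	which exists by de la Vallée-Poussin. Then bound $\E^{Q^{\varepsilon}}$ of a functional controlling the modulus of continuity, through the martingale decomposition of $f(\omega_t)$ for smooth compactly supported $f$.
	\item Working with a countable family of such $f$ and a metric built from $\sum_n 2^{-n}|f_n(\omega_t)-f_n(\omega_s)|$ avoids needing second moments.
\end{enumerate}

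\textbf{Step 4: passage to the limit.} Extract a narrow limit $Q$. The marginals converge, so $(e_t)_{\#}Q=\mu_t$. The martingale property is the identity
\[
\E^{Q}\Bigl[\Bigl(f(\omega_t)-f(\omega_s)-\int_s^t\Lop_rf(\omega_r)\,\dd{r}\Bigr)G(\omega_{[0,s]})\Bigr]=0 .
\]
The expectation does not pass to the limit directly, because $b$ and $a$ are only Borel and the map is discontinuous in $\omega$. To handle this, I would approximate $b$ and $a$ in $L^1(\mu_t\,\dd{t})$ by continuous compactly supported fields. The approximation error is controlled uniformly in $\varepsilon$ because every $Q^{\varepsilon}$ has marginals $\mu^{\varepsilon}_t$, and the mollified errors are bounded by the unmollified ones via the Jensen estimate of Step 1.

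\textbf{Main obstacle.} I expect Step 4 to be the hardest part: interchanging the limit with merely Borel coefficients. The uniform $L^1$ control across $\varepsilon$ is where the flux-averaged definition of the coefficients is essential.
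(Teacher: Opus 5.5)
Your outline reconstructs the Figalli--Trevisan argument, which the paper only cites. Steps 1, 3 and 4 are sound as sketches:
\begin{itemize}
\item the flux-averaged coefficients do solve the mollified equation;
\item Jensen controls both the $L^1$ and the de la Vall\'ee-Poussin quantities;
\item the countable-family metric is the right way to avoid moment assumptions;
\item approximating $b,a$ by continuous fields in $L^1(\mu_t\,\mathrm{d}t)$ handles Borel coefficients.
\end{itemize}

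\textbf{The gap is Step 2.} This is exactly where the integrable case differs from Figalli's bounded one. Under the sole hypothesis $\int_0^T\!\int(|b_t|+\|a_t\|)\,\mathrm{d}\mu_t\,\mathrm{d}t<\infty$, the regularised coefficients $b^{\varepsilon},a^{\varepsilon}$ satisfy only local-in-$x$ bounds that are integrable in $t$. Nothing prevents superlinear growth in $x$, so the regularised diffusion may explode.

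Neither of your remedies works:
\begin{itemize}
\item Truncating $b^{\varepsilon},a^{\varepsilon}$ yields an equation that $\mu^{\varepsilon}$ no longer solves, so the truncated process cannot be identified with $\mu^{\varepsilon}$.
\item The hypotheses provide no Lyapunov function to localise with.
\item The duality argument also fails. Applying a uniqueness theorem ``in the class with the integrability above'' to $\nu_t=(e_t)_{\#}Q^{\varepsilon}$ requires $\int_0^T\!\int(|b^{\varepsilon}_t|+\|a^{\varepsilon}_t\|)\,\mathrm{d}\nu_t\,\mathrm{d}t<\infty$, which is not known a priori. Moreover, for unbounded, possibly degenerate coefficients the backward Kolmogorov equation gives no global control of $\nabla f$ and $\nabla^2 f$ with which to kill localisation errors.
\end{itemize}
For degenerate locally Lipschitz coefficients, this uniqueness is normally obtained by combining the superposition principle with pathwise uniqueness, so the step is circular as written. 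Everything downstream rests on it: both the tightness estimate of Step 3 and the uniform error control of Step 4 are computed against the marginals $\mu^{\varepsilon}_t$ of $Q^{\varepsilon}$.

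\textbf{What Step 2 needs.} For bounded $b,a$ your Step 2 is correct, provided you use a strictly positive kernel with $|\nabla^k\rho_{\varepsilon}|\leqslant C_{k,\varepsilon}\rho_{\varepsilon}$ for $k\leqslant 2$, for instance $\rho(x)\propto e^{-\sqrt{1+|x|^2}}$. Then $b^{\varepsilon},a^{\varepsilon}$ are bounded with spatial derivatives up to order two bounded uniformly in $t$, and $\sqrt{a^{\varepsilon}}$ is Lipschitz. The stochastic equation is globally well posed, and duality gives uniqueness among all narrowly continuous probability solutions; this is Figalli's theorem.

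The integrable case needs an additional mechanism that uses $\mu^{\varepsilon}$ itself to stop mass escaping to infinity. One route is as follows. Let $X$ be the regularised diffusion started from $\mu^{\varepsilon}_0$, with explosion time $\zeta$. Almost surely on $\{\zeta\leqslant T\}$,
\[
\int_0^{\zeta}(|b^{\varepsilon}_s|+\|a^{\varepsilon}_s\|)(X_s)\,\mathrm{d}s=\infty,
\]
since otherwise both the drift integral and the martingale part would converge as $s\uparrow\zeta$. Hence a comparison $\Law(X_t;\,t<\zeta)\leqslant\mu^{\varepsilon}_t$, together with the integrability of $\mu^{\varepsilon}$, forces $\zeta>T$ almost surely. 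After that, two probability measures ordered in this way coincide.

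Proving that comparison is the missing piece; it amounts to minimality of the killed solution and is delicate when $a^{\varepsilon}$ degenerates. Alternatively you need some other reduction to the bounded case. Either way, this, rather than Step 4, is the main obstacle.
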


\begin{remark}\label{superposition-proof-rem}
	Theorem \ref{superposition-thm} is invoked from the cited superposition theory. Its hypotheses are independent of the smooth Fr\'echet manifold construction and permit singular measures and rough coefficients. The theorem supplies a path space representative of the marginal curve without identifying that representative uniquely.
\end{remark}

\subsection{Langevin diffusion and Otto entropy descent}

Take
\begin{equation}\label{langevin-equation-eq}
	\dd{X_t}=\grad\log\pi(X_t)\,\dd{t}+\sqrt{2}\,\dd{W_t}.
\end{equation}
Here $a=2\operatorname{id}$ and the current velocity is
\begin{equation}\label{langevin-current-velocity-eq}
	v_t=\grad\log\pi-\grad\log\rho_t=-\grad\log\frac{\rho_t}{\pi}.
\end{equation}
The continuity equation generated by \eqref{langevin-current-velocity-eq} is exactly \eqref{wasserstein-entropy-flow-eq}. The sample diffusion, deterministic probability flow and Otto gradient flow therefore have the same one-time law curve. Their information differs at path level: only the diffusion records the martingale noise, whilst Otto space records the evolving marginal and its minimal current velocity.

The stochastic diffusion and the deterministic probability flow are therefore equivalent under every evaluation observation $Q\mapsto(e_t)_{\#}Q$. Their pathwise quadratic variation, temporal correlations and conditional laws remain different. Marginal equivalence expresses precisely the information retained by the chosen observation.

Suppose that $\pi$ is invariant for $\dd{X_t}=b(X_t)\,\dd{t}+\sqrt{2}\,\dd{W_t}$. Writing $c=b-\grad\log\pi$, stationarity is equivalent to $\Div(\pi c)=0$, and the equilibrium current is $\pi c$. Thus $c$ is vertical at $\pi$: it transports particles along a stationary current while producing no density tangent. The identity need not hold without the invariance assumption.

\section{Functional derivatives on the space of laws}\label{lions-sec}

\subsection{The scalar functional derivative}

Let $U\colon\Prob_2(\R^d)\to\R$. A scalar linear functional derivative is a function $\delta U/\delta\mu(\mu,x)$ such that, for suitable $\mu$ and $\nu$,
\begin{equation}\label{linear-functional-derivative-eq}
	U(\nu)-U(\mu)=\int_0^1\int_{\R^d}\frac{\delta U}{\delta\mu}\bigl((1-s)\mu+s\nu,x\bigr)(\nu-\mu)(\dd{x})\dd{s}.
\end{equation}
It is determined modulo a scalar depending on $\mu$. A smooth density identifies this object precisely as a regular cotangent potential: the signed measure $\nu-\mu$ has total mass zero, so constants vanish from the pairing.

Relative entropy between smooth positive densities has
\[
	\frac{\delta\Ent_{\pi}}{\delta\mu}(\mu,x)=\log\frac{\rho(x)}{\pi(x)}+1.
\]
The interaction functional
\[
	U(\mu)=\frac12\int_{\R^d\times\R^d}W(x,y)\mu(\dd{x})\mu(\dd{y})
\]
has scalar derivative
\[
	\frac{\delta U}{\delta\mu}(\mu,x)=\int_{\R^d}W(x,y)\mu(\dd{y})
\]
when $W$ is symmetric.

The scalar functional derivative and the Lions derivative used below require separate hypotheses. Relative entropy has the displayed scalar derivative on a regular absolutely continuous locus; it is not a globally smooth functional on all of $\Prob_2(\R^d)$. Compatibility with a Lions derivative additionally requires spatial differentiability and lift regularity.

\subsection{The lifted derivative}

Let $(\Omega,\mathcal F,\mathbf P)$ be an atomless probability space rich enough to realise every law in $\Prob_2(\R^d)$. Lift $U$ to the Hilbert space $L^2(\Omega;\R^d)$ by
\begin{equation}\label{lions-lift-eq}
	\widetilde U(X)=U(\Law(X)).
\end{equation}
The neighbourhood differentiability in Theorem \ref{lions-structure-thm} supplies the usual representative and lift independence. Pointwise differentiability at a single lift requires a correspondingly pointwise structure theorem.

\begin{theorem}\label{lions-structure-thm}
	Let $\widetilde U\colon L^2(\Omega;\R^d)\to\R$ be law invariant and continuously Fr\'echet differentiable on a neighbourhood of $X$, where the atomless probability space is rich enough for the usual lifting construction. If $\mu=\Law(X)$, there exists a Borel map $\partial_{\mu}U(\mu,\cdot)\in L^2(\mu;\R^d)$ such that
	\begin{equation}\label{lions-representation-eq}
		D\widetilde U(X)[Y]=\E\left[\partial_{\mu}U(\mu,X)\cdot Y\right]
	\end{equation}
	for every $Y\in L^2(\Omega;\R^d)$. The representative depends only on $\mu$ up to $\mu$ almost everywhere equality \cite{wu2020lions}.
\end{theorem}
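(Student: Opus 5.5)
The plan is to follow the standard Lions argument in three stages: first extract an $L^2$ representative of the Fréchet derivative, then show it is a function of $X$, and finally show that this function depends only on the law.

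\textbf{Stage one: an $L^2$ gradient.} By the Riesz theorem on $L^2(\Omega;\R^d)$, the derivative $D\widetilde U(X)$ is represented by a gradient $\xi_X\in L^2(\Omega;\R^d)$, so that $D\widetilde U(X)[Y]=\E[\xi_X\cdot Y]$.

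\textbf{Stage two: $\xi_X$ is $\sigma(X)$-measurable.} Write $\xi_X=\E[\xi_X\mid X]+\eta$ with $\E[\eta\mid X]=0$. To show $\eta=0$ I use measure-preserving rearrangements that fix $X$. Let $\tau\colon\Omega\to\Omega$ be an invertible measure-preserving map with $X\circ\tau=X$. Law invariance gives
\[
	\widetilde U(Z\circ\tau)=\widetilde U(Z)
\]
for every $Z$ near $X$. Differentiating at $X$ yields $\xi_X\circ\tau=\xi_X$, using the continuity of $D\widetilde U$ near $X$ only to differentiate the identity. Because the space is atomless and rich, the group of such $\tau$ acts ergodically on each fibre of $X$. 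I realise this by choosing $\Omega\cong\R^d\times[0,1]$ with $X$ the first projection, using the regular disintegration and the assumed richness. Functions invariant under this group are then exactly the $\sigma(X)$-measurable ones. Hence $\xi_X=g(X)$ for a Borel map $g$ with $g\in L^2(\mu;\R^d)$.

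\textbf{Stage three: $g$ depends only on $\mu$.} Let $X'$ be another lift with the same law $\mu$. In a rich atomless space, for each $\varepsilon>0$ there is a measure-preserving bijection $\tau$ with $\|X'-X\circ\tau\|_{L^2}<\varepsilon$. Law invariance gives $\xi_{X\circ\tau}=\xi_X\circ\tau=g(X\circ\tau)$. Write $g'$ for the map obtained from stage two applied to $X'$. I would then pass to the limit $\varepsilon\to0$, using continuity of $D\widetilde U$ on the neighbourhood, to conclude $g'(X')=\lim_{\varepsilon\to0} g(X\circ\tau)$ in $L^2$. To identify this limit with $g(X')$, I would first prove the statement for $g$ continuous and bounded, and then approximate a general $g\in L^2(\mu)$ by such functions. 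This uses that $X\circ\tau\to X'$ in $L^2$ with the common law $\mu$, so that $\|h(X\circ\tau)-h(X')\|_{L^2}$ is small for continuous $h$ and the approximation error $\|g-h\|_{L^2(\mu)}$ is the same under both lifts. The conclusion is $g=g'$ $\mu$-almost everywhere, and I set $\partial_\mu U(\mu,\cdot)=g$.

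\textbf{Main obstacle.} I expect stage two to be the hardest step. It is where the atomlessness and richness hypotheses enter, and it requires constructing enough fibre-preserving measure-preserving maps to force invariant functions to be $\sigma(X)$-measurable. If that construction becomes heavy, I would instead defer to the structure theorem of \cite{wu2020lions}, as the statement itself suggests.
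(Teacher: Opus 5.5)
\textbf{The gap is in stage two.} The invariance $\xi_X\circ\tau=\xi_X$ for measure-preserving bijections $\tau$ with $X\circ\tau=X$ is correct, and it needs only differentiability at $X$. The failure is the next step. The claim that these maps act ergodically on the fibres of $X$, and the identification of $(\Omega,X)$ with $(\R^d\times[0,1],\mathrm{pr}_1)$, both require the conditional laws of $\omega$ given $X$ to be almost surely atomless. Richness of $\Omega$ only says which laws can be realised; it imposes nothing on the particular $X$ in the statement.

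For a counterexample, take $\Omega=[0,1]$ with Lebesgue measure, and set $X(\omega)=3\omega$ on $[0,1/3)$ and $X(\omega)=(3\omega-1)/2$ on $[1/3,1]$. Then $X$ is uniform, and the fibre over $x$ is $\{x/3,(2x+1)/3\}$ with conditional masses $1/3$ and $2/3$. Any $\tau$ preserving both $\mathbf P$ and $X$ preserves the disintegration, so it is the identity almost everywhere. The invariance is therefore vacuous, and $\mathbf 1_{[0,1/3)}$ is invariant without being $\sigma(X)$-measurable. Stage two proves nothing at such an $X$. Stage three applies stage two to an arbitrary second lift $X'$, so it inherits the same defect. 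It also tacitly uses continuity of $D\widetilde U$ at $X'$, which may lie far from the neighbourhood of $X$ where continuity was assumed.

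\textbf{Repair.} The ingredients of your stage three fix this, provided you run stage two only at a well-chosen lift.
On a standard atomless space, write $\Omega\cong[0,1]^2$ and set $\widetilde X=T(\omega_1)$ with $T_{\#}\mathrm{Leb}=\mu$. Its conditional laws carry a Lebesgue factor in $\omega_2$, so your fibrewise identification and ergodicity argument are valid at $\widetilde X$.

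Choose measure-preserving bijections $\tau_n$ with $\widetilde X\circ\tau_n\to X$. For any measure-preserving bijection $\theta$, the map $Z\mapsto Z\circ\theta$ is a linear isometry and $\widetilde U(Z\circ\theta)=\widetilde U(Z)$. Hence the $C^1$ ball of radius $r$ about $X$ is carried to a $C^1$ ball of radius $r$ about $X\circ\theta$, with $D\widetilde U(Z\circ\theta)=D\widetilde U(Z)\circ\theta$. Taking $\theta=\tau_n^{-1}$ for large $n$ shows that $\widetilde U$ is differentiable at $\widetilde X$. Stage two then gives $D\widetilde U(\widetilde X)=g(\widetilde X)$, and hence $D\widetilde U(\widetilde X\circ\tau_n)=g(\widetilde X\circ\tau_n)$.

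Continuity of $D\widetilde U$ at $X$, together with your approximation of $g$ by bounded continuous functions, gives $D\widetilde U(X)=g(X)$. This yields $\sigma(X)$-measurability and the representation at once. The same transported-ball argument applies at every other lift of $\mu$, with the same $g$, which gives lift independence.

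In this route the continuity hypothesis is used essentially for measurability, not only for lift independence. The paper does not carry out such an argument: it records the same equivariance and then cites \cite{wu2020lions} for both conclusions.
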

\begin{proof}
	Law invariance makes the Hilbert gradient equivariant under measure preserving transformations of the atomless probability space. The structure theorem for Lions derivatives gives a Borel function of $X$ alone and proves independence of the chosen lift under the stated neighbourhood regularity \cite{wu2020lions}. Equation \eqref{lions-representation-eq} is the Riesz representation of the derivative.
\end{proof}

When the scalar functional derivative is differentiable in $x$ and the two notions are compatible, the convention used in mean field analysis is
\begin{equation}\label{lions-scalar-relation-eq}
	\partial_{\mu}U(\mu,x)=\grad_x\frac{\delta U}{\delta\mu}(\mu,x).
\end{equation}
The scalar derivative is the density cotangent potential, whilst the Lions derivative is its base space differential. This is the same passage from $[f]$ to $\grad f$ which occurs before the Wasserstein sharp map.

\subsection{The transport chain rule}

\begin{theorem}\label{lions-transport-chain-thm}
	Let $\mu_t$ be an absolutely continuous quadratic Wasserstein curve satisfying
	\[
		\partial_t\mu_t+\Div(\mu_tv_t)=0
	\]
	with $v_t\in L^2(\mu_t;\R^d)$. Suppose $U$ has a scalar functional derivative satisfying \eqref{lions-scalar-relation-eq} and the integrability required to differentiate under the integral. Then
	\begin{equation}\label{lions-transport-chain-eq}
		\dv{}{t}U(\mu_t)=\int_{\R^d}\partial_{\mu}U(\mu_t,x)\cdot v_t(x)\mu_t(\dd{x})
	\end{equation}
	for almost every $t$.
\end{theorem}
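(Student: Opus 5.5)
The plan is to reduce the statement to the scalar identity \eqref{linear-functional-derivative-eq} along the curve, differentiate it at a fixed time, and then convert the resulting pairing against $\partial_t\mu_t$ into the transport pairing \eqref{transport-pairing-eq} using the weak continuity equation.

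\textbf{Step 1: a difference quotient.} Fix $t$ and $h$. Apply \eqref{linear-functional-derivative-eq} with $\mu=\mu_t$ and $\nu=\mu_{t+h}$. Dividing by $h$ gives
\[
	\frac{U(\mu_{t+h})-U(\mu_t)}{h}=\int_0^1\int_{\R^d}\frac{\delta U}{\delta\mu}\bigl(\mu^{s}_{t,h},x\bigr)\,\frac{\mu_{t+h}-\mu_t}{h}(\dd{x})\,\dd{s},
\]
where $\mu^{s}_{t,h}=(1-s)\mu_t+s\mu_{t+h}$.

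\textbf{Step 2: use the weak continuity equation.} For a fixed test function $\phi$ with $\grad\phi$ of at most linear growth, the weak form gives
\[
	\int\phi\,\dd(\mu_{t+h}-\mu_t)=\int_t^{t+h}\int\grad\phi\cdot v_r\,\dd{\mu_r}\,\dd{r}.
\]
The test function here also depends on $s$ and $h$ through $\mu^{s}_{t,h}$. I would freeze the measure argument at $\mu_t$ and control the error separately.

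\textbf{Step 3: error and limit.} The error term is
\[
	\Bigl(\tfrac{\delta U}{\delta\mu}(\mu^{s}_{t,h},\cdot)-\tfrac{\delta U}{\delta\mu}(\mu_t,\cdot)\Bigr)
\]
paired with the quotient $(\mu_{t+h}-\mu_t)/h$. By Step 2 this pairing is bounded by
\[
	\sup\bigl\|\grad_x\tfrac{\delta U}{\delta\mu}(\mu^{s}_{t,h})-\grad_x\tfrac{\delta U}{\delta\mu}(\mu_t)\bigr\|_{L^2(\mu_r)}\cdot\tfrac1h\int_t^{t+h}\|v_r\|_{L^2(\mu_r)}\,\dd{r}.
\]
This tends to zero provided the assumed integrability includes continuity of $\partial_\mu U=\grad_x\delta U/\delta\mu$ in $W_2$, with quadratic growth bounds. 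Note that $W_2(\mu^{s}_{t,h},\mu_t)\to0$ because the curve is absolutely continuous. For the frozen term, the Lebesgue differentiation theorem applies at almost every $t$, since $\int_0^T\|v_r\|_{L^2(\mu_r)}\dd{r}<\infty$ and $r\mapsto\int\grad\phi\cdot v_r\,\dd{\mu_r}$ is integrable. The limit is $\int\grad_x\frac{\delta U}{\delta\mu}(\mu_t,x)\cdot v_t\,\dd{\mu_t}$, and \eqref{lions-scalar-relation-eq} rewrites it as \eqref{lions-transport-chain-eq}.

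\textbf{Main obstacle.} The hard step is the almost-everywhere statement. The Lebesgue point set must be chosen independently of the test function, which here is $\frac{\delta U}{\delta\mu}(\mu_t,\cdot)$ and itself moves with $t$. To handle this I would instead prove the integrated identity
\[
	U(\mu_b)-U(\mu_a)=\int_a^b\int\partial_\mu U(\mu_r,x)\cdot v_r\,\dd{\mu_r}\,\dd{r}.
\]
I would do this by partitioning $[a,b]$ and summing Step 1 over the subintervals, with continuity of $r\mapsto\partial_\mu U(\mu_r,\cdot)$ controlling the Riemann-type sums. The integrand is in $L^1(\dd{r})$ by Cauchy--Schwarz, so differentiating the integrated identity then gives \eqref{lions-transport-chain-eq} for almost every $t$. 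This absolute continuity route is exactly where the unspecified ``integrability required'' must be made concrete.
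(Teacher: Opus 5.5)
Your proposal is correct and follows the same route as the paper. The paper's proof consists of writing $\dv{}{t}U(\mu_t)=\int_{\R^d}\frac{\delta U}{\delta\mu}(\mu_t,x)\,\partial_t\mu_t(\dd{x})$, testing the weak continuity equation against $\frac{\delta U}{\delta\mu}(\mu_t,\cdot)$, and invoking \eqref{lions-scalar-relation-eq}; you additionally justify the differentiation under the integral through the partitioned integrated identity, under explicit continuity and growth hypotheses on $\partial_\mu U$ and $\|v_r\|_{L^2(\mu_r)}\in L^1(\dd{r})$, which also resolves the Lebesgue-point issue you correctly flag, whereas the paper absorbs all of this into its unspecified ``integrability required to differentiate under the integral''.
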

\begin{proof}
	Apply the scalar first variation to $\partial_t\mu_t$ and use the weak continuity equation:
	\[
		\dv{}{t}U(\mu_t)=\int_{\R^d}\frac{\delta U}{\delta\mu}(\mu_t,x)\partial_t\mu_t(\dd{x})=\int_{\R^d}\grad_x\frac{\delta U}{\delta\mu}(\mu_t,x)\cdot v_t(x)\mu_t(\dd{x}).
	\]
	Formula \eqref{lions-scalar-relation-eq} gives the result.
\end{proof}

The two descending velocities of $U$ are consequently
\begin{equation}\label{lions-otto-velocity-eq}
	v_U^{\Wass}(\mu,x)=-\partial_{\mu}U(\mu,x)
\end{equation}
and
\begin{equation}\label{lions-stein-velocity-eq}
	v_U^{\Stein}(\mu,\cdot)=-\Iop_{\mu}^{*}\partial_{\mu}U(\mu,\cdot).
\end{equation}
The Stein field is the reproducing kernel image of the Lions derivative. These formulae recover \eqref{langevin-current-velocity-eq} and \eqref{stein-entropy-velocity-eq} when $U$ is relative entropy.

\subsection{The diffusion chain rule}

Let $\mu_t=\Law(X_t)$ for \eqref{diffusion-equation-eq}. Assume $\partial_{\mu}U$ is differentiable in $x$. The Fokker--Planck equation and integration by parts give
\begin{equation}\label{lions-diffusion-chain-eq}
	\dv{}{t}U(\mu_t)=\int_{\R^d}\partial_{\mu}U(\mu_t,x)\cdot b_t(x)\mu_t(\dd{x})+\frac12\int_{\R^d}\tr\left(a_t(x)\partial_x\partial_{\mu}U(\mu_t,x)\right)\mu_t(\dd{x}).
\end{equation}
The second derivative in \eqref{lions-diffusion-chain-eq} is the law functional's response to microscopic quadratic variation. Substituting the current velocity \eqref{current-velocity-eq} into \eqref{lions-transport-chain-eq} and integrating the score term by parts recovers the same formula. The diffusion and deterministic transport chain rules use different representations and have the same action on the marginal functional.

Common noise or a random conditional law introduces a second Lions derivative with respect to two measure variables. Equation \eqref{lions-diffusion-chain-eq} concerns an unconditional deterministic law and hence contains only the spatial derivative of the first Lions derivative \cite{carmona2018probabilistic1,carmona2018probabilistic2,cardaliaguet2019master}.

\subsection{Derivatives of law flows}

Let $\Phi_t\colon\Prob_2(\R^d)\to\Prob_2(\R^d)$ be a nonlinear law flow and suppose
\begin{gather}\label{law-vector-field-eq}
	\begin{aligned}
		\partial_t\mu_t & = \mathcal V(\mu_t), \\
		\mu_t & = \Phi_t(\mu_0).
	\end{aligned}
\end{gather}
A perturbation $\eta_0$ of zero total mass is propagated by
\[
	\eta_t=D\Phi_t(\mu_0)[\eta_0].
\]
Formal differentiation of \eqref{law-vector-field-eq} gives the variational equation
\begin{equation}\label{law-linearisation-eq}
	\partial_t\eta_t=D\mathcal V(\mu_t)[\eta_t].
\end{equation}
This operator is the tangent propagator of the nonlinear semigroup.

The McKean--Vlasov continuity equation
\[
	\partial_t\rho=-\Div\bigl(\rho b(\rho,\cdot)\bigr)
\]
has linearisation
\begin{equation}\label{mckean-vlasov-linearisation-eq}
	\partial_t\eta=-\Div\left(\eta b(\rho,\cdot)+\rho D_{\rho}b(\rho,\cdot)[\eta]\right).
\end{equation}
If $b$ has a scalar measure derivative, then
\[
	D_{\rho}b(\rho,x)[\eta]=\int_{\R^d}\frac{\delta b}{\delta\rho}(\rho,x,y)\eta(\dd{y}).
\]
The Lions calculus therefore produces both a cotangent representative for scalar functionals and the derivative needed to linearise the vector field on the law manifold.

\section{Microscopic fluctuations and the geometry selected by noise}\label{large-deviation-sec}

\subsection{The empirical short time rate}

Consider $N$ independent Brownian particles with generator $\Delta$ and deterministic empirical initial data converging to a density $\rho_0$. Let $p_h(x,\dd{y})$ be the heat transition kernel at time $h$. The joint empirical distribution of initial and final positions has a conditional large deviations principle. Contracting to the final empirical density gives the rate
\begin{equation}\label{adpz-rate-eq}
	J_h(\rho\mid\rho_0)=\inf_{q\in\Pi(\rho_0,\rho)}\Ent\bigl(q\mid\rho_0(\dd{x})p_h(x,\dd{y})\bigr),
\end{equation}
where $\Pi(\rho_0,\rho)$ is the set of couplings with the stated marginals. Thus
\[
	\mathbf P(\mu_h^N\approx\rho\mid\mu_0^N\approx\rho_0)\asymp e^{-NJ_h(\rho\mid\rho_0)}.
\]
The heat kernel short time asymptotic suggests the transport term $W_2^2/(4h)$, whilst the entropy of rearranging independent particles supplies the remaining term. Adams--Dirr--Peletier--Zimmer proved the complete $\Gamma$ convergence statement on a bounded interval under restrictive positive density hypotheses. Duong, Laschos and Renger developed the corresponding path action and multidimensional lower bound, and Erbar, Maas and Renger supplied the higher dimensional recovery construction \cite{adams2011large,duong2013wasserstein,erbar2015multiple}.

\begin{remark}\label{adpz-thm}
	The one dimensional theorem of Adams--Dirr--Peletier--Zimmer, the multidimensional lower-bound and dynamic results of Duong--Laschos--Renger, and the multidimensional recovery construction of Erbar--Maas--Renger have different hypotheses and topologies \cite{adams2011large,duong2013wasserstein,erbar2015multiple}. The respective assumptions of those results imply that the renormalised functionals
	\[
		\rho\longmapsto J_h(\rho\mid\rho_0)-\frac{1}{4h}W_2^2(\rho_0,\rho)
	\]
	have the common limiting entropy contribution
	\begin{equation}\label{adpz-gamma-limit-eq}
		\frac12\Ent(\rho)-\frac12\Ent(\rho_0),
	\end{equation}
	with convergence understood in the topology of the cited theorem. This paragraph records the shared asymptotic conclusion rather than a new theorem combining the distinct results.
\end{remark}

The Jordan--Kinderlehrer--Otto step for $\partial_t\rho=\Delta\rho$ is
\begin{equation}\label{minimising-movement-heat-step-eq}
	\rho_h\in\operatorname*{arg\,min}_{\rho}\left\{\frac{1}{2h}W_2^2(\rho_0,\rho)+\Ent(\rho)\right\}.
\end{equation}
The results summarised in Remark \ref{adpz-thm} say that twice the microscopic rate $J_h$, after subtraction of the constant $\Ent(\rho_0)$ to leading order, has the same small time variational structure as \eqref{minimising-movement-heat-step-eq}. The Wasserstein metric and entropy have therefore arisen together from the probability of atypical empirical motion.

\subsection{The dynamic action and energy dissipation}

The dynamic empirical rate for independent Brownian particles has the formal form
\begin{equation}\label{brownian-dynamic-rate-eq}
	I_{[0,T]}(\rho)=\frac14\int_0^T\left\|\partial_t\rho_t-\Delta\rho_t\right\|_{\dot H^{-1}_{\rho_t}}^2\,\dd{t}.
\end{equation}
The identity $\Delta\rho=-\Kop^{\Wass}_{\rho}\dd{\Ent}(\rho)$ converts this expression to
\[
	I_{[0,T]}(\rho)=\frac14\int_0^T\left\|\partial_t\rho_t+\Kop^{\Wass}_{\rho_t}\dd{\Ent}(\rho_t)\right\|_{T_{\rho_t,\Wass}}^2\,\dd{t}.
\]
Expanding the square and using duality gives
\begin{equation}\label{brownian-action-decomposition-eq}
	I_{[0,T]}(\rho)=\frac14\int_0^T\|\partial_t\rho_t\|_{T_{\rho_t,\Wass}}^2\,\dd{t}+\frac14\int_0^T\|\dd{\Ent}(\rho_t)\|_{T^{*}_{\rho_t,\Wass}}^2\,\dd{t}+\frac12\left(\Ent(\rho_T)-\Ent(\rho_0)\right).
\end{equation}
The zero cost curve is the heat flow. Formula \eqref{brownian-action-decomposition-eq} displays the general relation between empirical fluctuations and gradient flow: the tangent norm measures kinematic cost, the cotangent norm measures thermodynamic force, and their cross term is the energy difference \cite{mielke2014variational}.

\subsection{The Stein many particle action}

The following Stein action is formal unless the hypotheses of a specific many particle large deviations theorem are imposed. N\"usken and Renger construct the tangent--cotangent pair and prove long time variational statements under their assumptions, whilst presenting the pathwise large deviations principle formally \cite{nusken2023large}. Regularised Stein flows provide a complementary comparison with Wasserstein transport \cite{he2025regularized}. Regular measure curves have the candidate rate functional
\begin{equation}\label{stein-dynamic-rate-eq}
	I^{\Stein}_{[0,T]}(\rho)=\frac14\int_0^T\left\|\partial_t\rho_t+\Kop^{\Stein}_{\rho_t}\dd{\Ent}_{\pi}(\rho_t)\right\|_{T_{\rho_t,\Stein}}^2\,\dd{t}.
\end{equation}
Their tangent space is the quotient Hilbert range of kernel velocities and their cotangent is the completion induced by the kernel Stein information, agreeing with \S\ref{stein-sec}. Expanding \eqref{stein-dynamic-rate-eq} gives
\begin{align}
	I^{\Stein}_{[0,T]}(\rho)
	&=\frac12\left(\Ent_{\pi}(\rho_T)-\Ent_{\pi}(\rho_0)\right)
	+\frac14\int_0^T\|\partial_t\rho_t\|_{T_{\rho_t,\Stein}}^2\,\dd{t} \notag\\
	&\hspace{1cm}+\frac14\int_0^T\|\dd{\Ent}_{\pi}(\rho_t)\|_{T^{*}_{\rho_t,\Stein}}^2\,\dd{t}. \label{stein-action-decomposition-eq}
\end{align}
The long time leading contribution for empirical ergodic averages is one quarter of the Stein--Fisher information under their normalisation. Hence the same cotangent norm controls deterministic entropy dissipation and the exponential suppression of stochastic empirical deviations.

\begin{remark}\label{deterministic-fluctuation-rem}
	Suppose two geometries induce the same deterministic gradient vector field for a functional along a given curve. Their large deviations actions can nevertheless differ because the tangent and cotangent norms away from the zero-cost direction remain different. Deterministic law equivalence therefore retains the typical macroscopic evolution and forgets the covariance of microscopic fluctuations. Equality of the complete actions is a stronger equivalence than equality of their minimisers.
\end{remark}

\begin{remark}\label{noise-geometry-rem}
	A deterministic gradient equation can admit several metric representations. The microscopic noise covariance distinguishes them: independent Brownian noise yields the Wasserstein tangent norm, whilst the correlated kernel noise in the N\"usken--Renger system yields the Stein tangent norm. The generator to large deviations passage therefore identifies the Onsager operator from the covariance of microscopic fluctuations.
\end{remark}

\section{Generators, semigroups and tangent propagators}\label{semigroup-sec}

\subsection{Four operator levels}

Let $(\Pop_t)_{t\geqslant0}$ be a Markov semigroup on observables with generator $\Lop$, and let $(\Pop_t^{*})_{t\geqslant0}$ be its dual action on measures. Whenever a density belongs to the domain of the adjoint generator,
\begin{gather}\label{linear-semigroup-density-eq}
	\begin{aligned}
		\rho_t & = \Pop_t^{*}\rho_0, \\
		\partial_t\rho_t & = \Lop^{*}\rho_t.
	\end{aligned}
\end{gather}
Mass preservation gives
\[
	\int_M\Lop^{*}\rho\,\dd{\mathrm{vol}}=\int_M\Lop 1\,\rho\,\dd{\mathrm{vol}}=0.
\]
Thus $\Lop^{*}\rho$ is a density tangent whenever it has the required regularity.

\begin{prop}\label{generator-tangent-prop}
	Let $\mathcal C$ be a linear space of smooth densities invariant under $\Pop_t^{*}$ and contained in $\operatorname{Dom}(\Lop^{*})$. Assume $\Lop^{*}\rho\in C^{\infty}_0(M)$ for $\rho\in\mathcal C\cap\Prob^{\infty}_+(M)$. Then
	\[
		\mathcal V_{\Lop}(\rho)=\Lop^{*}\rho
	\]
	defines a vector field on that smooth core. Continuity and linearity of $\Pop_t^{*}$ in an ambient locally convex topology give
	\begin{equation}\label{linear-tangent-propagator-eq}
		D(\Pop_t^{*})_{\rho}[\sigma]=\Pop_t^{*}\sigma.
	\end{equation}
	Without preservation of such a core, $\Lop^{*}\rho$ is a distributional tangent at points of its operator domain rather than a vector field on all of $\Prob^{\infty}_+(M)$.
\end{prop}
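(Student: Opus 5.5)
The proof splits into three parts: the vector field claim, the tangent propagator formula \eqref{linear-tangent-propagator-eq}, and the final distributional remark. None needs more than the elementary structure of Proposition \ref{smooth-density-manifold-prop} together with the mass identity $\int_M\Lop^{*}\rho\,\dd{\mathrm{vol}}=\int_M\Lop1\,\rho\,\dd{\mathrm{vol}}=0$ displayed before the statement.

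\textbf{The vector field.} Proposition \ref{smooth-density-manifold-prop} identifies $T_{\rho}\Prob^{\infty}_+(M)$ with $C^{\infty}_0(M)$ at every point. A vector field on the core is therefore just an assignment $\rho\mapsto\mathcal V(\rho)\in C^{\infty}_0(M)$ for $\rho\in\mathcal C\cap\Prob^{\infty}_+(M)$.

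The hypothesis $\Lop^{*}\rho\in C^{\infty}_0(M)$ provides exactly such an assignment. The mass identity shows that the zero integral condition is automatic once smoothness holds, since $\Lop1=0$ for a Markov generator. Invariance of $\mathcal C$ under $\Pop_t^{*}$ and preservation of positivity and mass by a Markov dual semigroup guarantee that the curve $\rho_t=\Pop_t^{*}\rho_0$ stays in the core. By \eqref{linear-semigroup-density-eq} this curve is then an integral curve of $\mathcal V_{\Lop}$.

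\textbf{The tangent propagator.} Let $\sigma\in C^{\infty}_0(M)$ with $\rho+\varepsilon\sigma$ in the core for small $\varepsilon$. Linearity of $\Pop_t^{*}$ gives
\[
	\Pop_t^{*}(\rho+\varepsilon\sigma)-\Pop_t^{*}\rho=\varepsilon\Pop_t^{*}\sigma.
\]
The difference quotient is therefore constant in $\varepsilon$. Continuity of $\Pop_t^{*}$ in the ambient locally convex topology makes the resulting Gateaux derivative $\sigma\mapsto\Pop_t^{*}\sigma$ a continuous linear map, and this is the derivative in the Fr\'echet (Michal--Bastiani) sense used on the affine chart $\chi_{\rho_0}$.

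The one point needing care is that the derivative should take values in the tangent space. For this, $\Pop_t^{*}\sigma$ must have zero mass. This holds because $\int_M\Pop_t^{*}\sigma\,\dd{\mathrm{vol}}=\int_M\Pop_t1\,\sigma\,\dd{\mathrm{vol}}=\int_M\sigma\,\dd{\mathrm{vol}}=0$. Smoothness of $\Pop_t^{*}\sigma$ follows from invariance of the linear space $\mathcal C$, since $\sigma$ is a difference of elements of $\mathcal C$.

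\textbf{Without a preserved core.} If no such core is preserved, $\Lop^{*}\rho$ is defined only through the duality $\langle\Lop^{*}\rho,f\rangle=\int_M\rho\,\Lop f\,\dd{\mathrm{vol}}$. This defines a distribution annihilating constants, hence an element of $\mathcal D'(M)/\R$ of zero mass, but it is not necessarily smooth. It is therefore a distributional tangent at points of the operator domain, in the sense of Proposition \ref{full-cotangent-prop}'s ambient duality, rather than a section of $T\Prob^{\infty}_+(M)$.

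The main obstacle is the mild formalisation of which ambient topology makes $\Pop_t^{*}$ continuous on the span of the core and of the perturbations $\sigma$. I would handle it by working in the closed linear span $\mathcal C-\mathcal C\subseteq C^{\infty}(M)$ with the Fr\'echet topology, and by taking the stated continuity as precisely this hypothesis.
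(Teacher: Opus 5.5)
Your proposal is correct and follows the paper's route: mass preservation ($\Lop1=0$, $\Pop_t1=1$) gives zero total mass, the smooth-core hypothesis gives smoothness, and linearity of $\Pop_t^{*}$ together with the assumed continuity gives \eqref{linear-tangent-propagator-eq}; your version also spells out why $\Pop_t^{*}\sigma$ has zero mass and is smooth, which the paper leaves implicit. One aside overreaches but is not needed: Markovianity gives preservation of nonnegativity and mass, not of strict positivity, so your claim that $\Pop_t^{*}\rho_0$ stays in $\mathcal C\cap\Prob^{\infty}_+(M)$ would need a further argument (for instance a minimum principle), although the statement asserts nothing about integral curves.
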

\begin{proof}
	Mass preservation gives zero total mass whenever the adjoint action is defined. The smooth-core assumption places this variation in $C^{\infty}_0(M)$. Formula \eqref{linear-tangent-propagator-eq} follows from linearity, and differentiation at $t=0$ gives the infinitesimal tangent propagator $\Lop^{*}\sigma$ on its domain.
\end{proof}

The tangent space itself consists of signed density variations. Linear operators enter at four distinct levels:
\begin{enumerate}[label=(\roman*)]
	\item the continuity operator $\Aop_{\rho}$ sends sample velocities to density tangents;
	\item the Onsager operator $\Kop_{\rho}$ sends density covectors to density tangents;
	\item the adjoint generator $\rho\mapsto\Lop^{*}\rho$ is a vector field on an invariant smooth core, when such a core is preserved;
	\item the derivative $D\Phi_t(\rho)$ of a law flow propagates tangent perturbations.
\end{enumerate}
The same formula may represent more than one level only after the corresponding domains have been identified. A linear semigroup has $D\Phi_t=\Pop_t^{*}$. A nonlinear McKean--Vlasov or Stein flow has a tangent propagator governed by \eqref{law-linearisation-eq}, which depends on the reference trajectory.

\subsection{The backwards equation and cotangent propagation}

The observable semigroup solves the backwards Kolmogorov equation
\begin{gather}\label{backward-kolmogorov-eq}
	\begin{aligned}
		\partial_t f_t & = \Lop f_t, \\
		f_t & = \Pop_tf_0.
	\end{aligned}
\end{gather}
The density semigroup solves the forwards equation \eqref{linear-semigroup-density-eq}. Their pairing is preserved in the sense
\begin{equation}\label{semigroup-duality-eq}
	\int_M(\Pop_tf)\rho\,\dd{\mathrm{vol}}=\int_Mf(\Pop_t^{*}\rho)\,\dd{\mathrm{vol}}.
\end{equation}
A regular dual test function can therefore be propagated backwards by the observable semigroup, whilst a density tangent is propagated forwards by the dual semigroup. This is propagation under the distributional pairing. It need not preserve a density dependent Otto or Stein cotangent norm and should not be read as parallel transport for either weak metric.

Let $\rho_t=\Pop_t^{*}\rho_0$ and let $\sigma_t=\Pop_t^{*}\sigma_0$. Let $f_t=\Pop_{T-t}f_T$. Differentiation gives
\[
	\dv{}{t}\int_Mf_t\sigma_t\,\dd{\mathrm{vol}}=\int_M(-\Lop f_t)\sigma_t\,\dd{\mathrm{vol}}+\int_Mf_t\Lop^{*}\sigma_t\,\dd{\mathrm{vol}}=0.
\]
Thus the backwards observable is the cotangent dual of the forwards tangent propagator.

\subsection{Factorisation of reversible generators}

Suppose $\pi$ is invariant and the diffusion generator is symmetric in $L^2(\pi)$. The overdamped Langevin generator is
\begin{equation}\label{reversible-generator-eq}
	\Lop_{\pi}f=\Delta f+\langle\grad\log\pi,\grad f\rangle_g.
\end{equation}
It satisfies
\begin{equation}\label{reversible-dirichlet-eq}
	-\int_Mf\Lop_{\pi}h\,\pi\,\dd{\mathrm{vol}}=\int_M\langle\grad f,\grad h\rangle_g\pi\,\dd{\mathrm{vol}}.
\end{equation}
The adjoint acting on a density $\rho$ with respect to volume is
\[
	\Lop_{\pi}^{*}\rho=\Delta\rho-\Div(\rho\grad\log\pi).
\]
Lemma \ref{entropy-differential-lem} and \eqref{wasserstein-onsager-eq} give the factorisation
\begin{equation}\label{generator-onsager-factorisation-eq}
	\Lop_{\pi}^{*}\rho=-\Kop_{\rho}^{\Wass}\dd{\Ent}_{\pi}(\rho).
\end{equation}
The generator has therefore become a density space gradient vector field after evaluation at $\rho$. Contrast this with a general nonreversible generator, whose adjoint may contain a current component not represented by the symmetric Wasserstein Onsager operator.

The Stein entropy vector field has the analogous form
\begin{equation}\label{stein-nonlinear-generator-eq}
	\mathcal V_{\Stein}(\rho)=-\Kop_{\rho}^{\Stein}\dd{\Ent}_{\pi}(\rho).
\end{equation}
It is nonlinear even when the target and kernel are fixed, since the adjoint inclusion $\Iop_{\rho}^{*}$ and the continuity operator both depend on $\rho$. Where it is well posed and autonomous, the flow generated by \eqref{stein-nonlinear-generator-eq} is a nonlinear semigroup on measures. No linear Markov semigroup on observables has this flow as its dual in general.

\begin{remark}\label{generator-two-geometry-rem}
	If the same adjoint generator satisfies
	\[
		\Lop^{*}\rho=-\Kop^{\Wass}_{\rho}\dd{\mathcal F}(\rho)=-\Kop^{\Stein}_{\rho}\dd{\mathcal F}(\rho)
	\]
	on a region, then the two geometries are dynamically equivalent for the distinguished force $\dd{\mathcal F}(\rho)$ on that region. The equality does not determine their action on other covectors, their tangent norms or their fluctuation theories. The criterion in Theorem \ref{comparison-levels-thm} says precisely that $\Mop_{\rho}\grad(\delta\mathcal F/\delta\rho)=\grad(\delta\mathcal F/\delta\rho)$.
\end{remark}

\subsection{Linearisation of the Stein flow}

Write
\[
	\mathcal V_{\Stein}(\rho)=-\Aop_{\rho}\Iop_{\rho}^{*}\grad\log\frac{\rho}{\pi}.
\]
Let $\eta$ be a smooth zero mass variation. Euclidean coordinates with a scalar kernel give, after differentiation,
\begin{align}
	D\mathcal V_{\Stein}(\rho)[\eta]
	&=\Div\left(\eta\Iop_{\rho}^{*}\grad\log\frac{\rho}{\pi}\right)
	+\Div\left(\rho\Iop_{\eta}^{*}\grad\log\frac{\rho}{\pi}\right) \notag\\
	&\hspace{1cm}+\Div\left(\rho\Iop_{\rho}^{*}\grad\frac{\eta}{\rho}\right), \label{stein-linearisation-eq}
\end{align}
where
\[
	(\Iop_{\eta}^{*}u)(y)=\int_{\R^d}k(x,y)u(x)\eta(x)\,\dd{x}.
\]
The three terms respectively vary the advected density, the measure entering the kernel integral, and the entropy cotangent. Equation \eqref{stein-linearisation-eq} is the infinitesimal generator of the tangent propagator $D\Phi_t^{\Stein}(\rho_0)$. Its form demonstrates why the tangent dynamics of a nonlinear density flow contains more information than the original sample velocity.

\section{Stein equations from diffusion semigroups and resolvents}\label{resolvent-sec}

\subsection{The reversible Poisson equation}

Fix $q\in\Prob^{\infty}_+(M)$ and consider
\begin{equation}\label{q-langevin-generator-eq}
	\Lop_q u=\Delta u+\langle\grad\log q,\grad u\rangle_g.
\end{equation}
The operator is self-adjoint and nonpositive in $L^2(q)$ on its natural domain, with Dirichlet form
\begin{equation}\label{q-dirichlet-form-eq}
	\mathcal E_q(u,h)=-\int_Mu\Lop_qh\,q\,\dd{\mathrm{vol}}=\int_M\langle\grad u,\grad h\rangle_gq\,\dd{\mathrm{vol}}.
\end{equation}
It factorises through the density Stein operator as
\begin{equation}\label{generator-stein-factorisation-eq}
	\Lop_qu=\Tspace_q(\grad u).
\end{equation}
Let $(\Pop_t^q)_{t\geqslant0}$ be the associated reversible semigroup.

Since $M$ is connected and closed and $q$ is smooth and strictly positive, a Poincar\'e inequality holds. Write $\lambda>0$ for a spectral-gap constant:
\begin{equation}\label{poincare-q-eq}
	\int_M|u-\E_q[u]|^2q\,\dd{\mathrm{vol}}\leqslant\frac1\lambda\int_M|\grad u|_g^2q\,\dd{\mathrm{vol}}.
\end{equation}
Then $\|\Pop_t^qg\|_{L^2(q)}\leqslant e^{-\lambda t}\|g\|_{L^2(q)}$ for centred $g$.

\begin{theorem}\label{semigroup-stein-solution-thm}
	Let $g\in L^2(q)$ satisfy $\E_q[g]=0$. Define
	\begin{equation}\label{poisson-semigroup-eq}
		u_g=\int_0^{\infty}\Pop_t^qg\,\dd{t}
	\end{equation}
	as a Bochner integral in $L^2(q)$. Then $u_g$ belongs to the form domain, solves
	\begin{equation}\label{poisson-equation-eq}
		\Lop_qu_g=-g
	\end{equation}
	weakly, and the vector field
	\begin{equation}\label{semigroup-stein-field-eq}
		\psi_g=\grad u_g
	\end{equation}
	solves the Stein equation
	\begin{equation}\label{semigroup-stein-equation-eq}
		\Tspace_q\psi_g=-g.
	\end{equation}
	Moreover,
	\begin{equation}\label{stein-solution-bound-eq}
		\|\psi_g\|_{L^2(q)}^2=\langle g,(-\Lop_q)^{-1}g\rangle_{L^2(q)}\leqslant\frac1\lambda\|g\|_{L^2(q)}^2.
	\end{equation}
\end{theorem}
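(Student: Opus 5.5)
The plan is to work spectrally in $L^2(q)$ with the nonnegative self-adjoint operator $-\Lop_q$ and the Dirichlet form $\mathcal E_q$ of \eqref{q-dirichlet-form-eq}.

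\textbf{Convergence of the integral.} Write $L^2_0(q)$ for the centred subspace. It is invariant under $\Pop^q_t$, since $\E_q[\Pop^q_tg]=\E_q[g]$ by invariance of $q$. The Poincar\'e inequality \eqref{poincare-q-eq} gives the stated decay $\|\Pop^q_tg\|\leqslant e^{-\lambda t}\|g\|$. The integrand is continuous in $t$ and integrable in norm, so the Bochner integral \eqref{poisson-semigroup-eq} converges, with $\|u_g\|_{L^2(q)}\leqslant\lambda^{-1}\|g\|$.

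\textbf{The Poisson equation.} The cleanest route is the spectral theorem. On $L^2_0(q)$ the spectrum of $-\Lop_q$ lies in $[\lambda,\infty)$, and $\int_0^\infty e^{-ts}\,\dd t=s^{-1}$ for every $s\geqslant\lambda$. Fubini for spectral measures then identifies $u_g=(-\Lop_q)^{-1}g$, so $u_g$ lies in the operator domain and $\Lop_qu_g=-g$. In particular the weak form $\mathcal E_q(u_g,h)=\langle g,h\rangle_{L^2(q)}$ holds for every $h$ in the form domain.

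As a semigroup-only alternative, one would check that
\[
	\Pop^q_s u_g-u_g=-\int_0^s\Pop^q_tg\,\dd t,
\]
divide by $s$ and let $s\to0$. That gives $u_g\in\operatorname{Dom}(\Lop_q)$ with $\Lop_qu_g=-g$ directly from the definition of the generator.

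\textbf{The Stein equation.} The factorisation \eqref{generator-stein-factorisation-eq}, $\Lop_qu=\Tspace_q(\grad u)$, holds for smooth $u$ and extends weakly. For smooth $h$, integration by parts against $q$ gives
\[
	\int_M(\Tspace_q\psi)\,h\,q\,\dd{\mathrm{vol}}=-\int_M\langle\psi,\grad h\rangle_g\,q\,\dd{\mathrm{vol}}.
\]
Taking $\psi=\grad u_g$ and using the weak Poisson equation therefore yields \eqref{semigroup-stein-equation-eq} in the distributional sense relative to $q$. If $g$ is smooth, elliptic regularity for the uniformly elliptic operator $\Lop_q$ with smooth coefficients upgrades this to a classical identity, as in Lemma \ref{weighted-poisson-lem}.

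\textbf{The bound.} Test the weak equation with $h=u_g$:
\[
	\|\psi_g\|^2_{L^2(q)}=\mathcal E_q(u_g,u_g)=\langle g,u_g\rangle=\langle g,(-\Lop_q)^{-1}g\rangle.
\]
The spectral bound $(-\Lop_q)^{-1}\leqslant\lambda^{-1}$ on $L^2_0(q)$ gives \eqref{stein-solution-bound-eq}.

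\textbf{Main obstacle.} The delicate step is justifying that $u_g$ lies in the form domain and satisfies the weak equation when $g$ is merely $L^2$. I would settle it through the spectral calculus, whose key point is that $s\mapsto s\cdot s^{-2}$ is bounded by $\lambda^{-1}$ on $[\lambda,\infty)$. A secondary point is to state carefully that the Stein equation then holds weakly relative to $q$, not pointwise.
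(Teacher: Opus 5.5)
Your proposal is correct and follows the paper's argument. Exponential decay on centred functions gives convergence of the Bochner integral, $u_g$ is identified with $(-\Lop_q)^{-1}g$, the factorisation $\Lop_qu=\Tspace_q(\grad u)$ gives the Stein equation, and pairing the Poisson equation with $u_g$, together with the spectral gap, gives the bound.

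The differences are only technical. The paper places $u_g$ in the generator domain through the truncations $u_g^T=\int_0^T\Pop_t^qg\,\dd{t}$, the identity $\Lop_qu_g^T=\Pop_T^qg-g$ and closedness of $\Lop_q$. You instead use the spectral calculus, or the equivalent identity $\Pop_s^qu_g-u_g=-\int_0^s\Pop_t^qg\,\dd{t}$. You also make explicit the weak, integration-by-parts sense of the Stein equation, which the paper leaves implicit.
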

\begin{proof}
	Exponential decay makes \eqref{poisson-semigroup-eq} convergent in $L^2(q)$. Fix $T<\infty$ and set $u_g^T=\int_0^T\Pop_t^qg\,\dd{t}$. Semigroup differentiation gives
	\[
		\Lop_qu_g^T=\int_0^T\dv{}{t}\Pop_t^qg\,\dd{t}=\Pop_T^qg-g.
	\]
	The final term tends to zero in $L^2(q)$. Closedness of the generator or, equivalently, the weak form associated with \eqref{q-dirichlet-form-eq}, gives \eqref{poisson-equation-eq}. Formula \eqref{generator-stein-factorisation-eq} proves \eqref{semigroup-stein-equation-eq}.

	Taking the $L^2(q)$ pairing of $-\Lop_qu_g=g$ with $u_g$ gives
	\[
		\|\grad u_g\|_{L^2(q)}^2=\langle g,u_g\rangle_{L^2(q)}=\langle g,(-\Lop_q)^{-1}g\rangle_{L^2(q)}.
	\]
	The spectral gap bounds the inverse of $-\Lop_q$ on centred functions by $1/\lambda$, proving \eqref{stein-solution-bound-eq}.
\end{proof}

Theorem \ref{semigroup-stein-solution-thm} makes the generator to semigroup passage a direct covector construction. A centred observable $g$ is propagated by the semigroup, integrated to obtain a Poisson potential, and differentiated to obtain a particle field. The sufficient statistics of \S\ref{statistical-submanifolds-sec} permit the choice
\begin{equation}\label{statistic-semigroup-field-eq}
	\psi_j^{\eta}=\grad\int_0^{\infty}\Pop_t^{q_{\eta}}\bigl(T_j-m_j(\eta)\bigr)\,\dd{t}.
\end{equation}
These fields satisfy \eqref{statistic-stein-equation-eq}. Smooth statistics give smooth potentials and fields by elliptic regularity. Membership in a prescribed reproducing kernel Hilbert space remains an additional range condition.

\subsection{The resolvent interpretation}

Fix $\alpha>0$ and define
\begin{equation}\label{resolvent-eq}
	\Rop_{\alpha}^{q}g=\int_0^{\infty}e^{-\alpha t}\Pop_t^qg\,\dd{t}=(\alpha-\Lop_q)^{-1}g.
\end{equation}
The zero frequency resolvent on centred functions is $(-\Lop_q)^{-1}$. The particle field in Theorem \ref{semigroup-stein-solution-thm} is
\[
	\psi_g=\grad(-\Lop_q)^{-1}g.
\]
It is therefore an elliptic response of the diffusion to the covector observable $g$.

The Green kernel of a sufficiently smoothing power $(\alpha-\Lop_q)^{-m}$ gives a target-adapted reproducing kernel of scalar potentials when $m$ is large enough for point evaluation to be continuous. Taking gradients of this potential Hilbert space produces a vector-valued reproducing kernel. Such a kernel incorporates the geometry and mixing of the target diffusion: slow spectral modes receive large resolvent weight, whilst high frequency modes are regularised. Standard Stein variational kernels are usually specified independently of $\Lop_q$; the resolvent construction supplies a principled alternative when target dependent kernels are acceptable.

\begin{prop}\label{resolvent-kernel-prop}
	Let $\Lop_q$ be uniformly elliptic on the connected closed manifold $M$, let $\alpha>0$, and let $\mathcal G_{\alpha,m}=\Ran(\alpha-\Lop_q)^{-m/2}$ with graph norm
	\[
		\|u\|_{\mathcal G_{\alpha,m}}=\|(\alpha-\Lop_q)^{m/2}u\|_{L^2(q)}.
	\]
	If $m>d/2+1$, then $\mathcal G_{\alpha,m}\hookrightarrow C^1(M)$ and is a scalar reproducing kernel Hilbert space; its gradient image, furnished with the quotient norm by constant potentials, is a Hilbert space of continuous vector fields with continuous point evaluation. If the $C^1$ vector-field embedding in \eqref{kernel-inclusion-eq} is required, it is sufficient to assume $m>d/2+2$. The lower threshold gives a distributional interpretation of the continuity operator.
\end{prop}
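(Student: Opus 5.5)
The argument runs through the spectral calculus of the self-adjoint operator $-\Lop_q$ on $L^2(q)$, followed by Sobolev embedding on the closed manifold. Uniform ellipticity with smooth coefficients and strict positivity of $q$ make $-\Lop_q$ essentially self-adjoint on $C^{\infty}(M)$, nonnegative, and with discrete spectrum $0=\lambda_0<\lambda_1\leqslant\lambda_2\leqslant\cdots$, where $\lambda_1\geqslant\lambda$ by \eqref{poincare-q-eq}. The eigenfunctions form an orthonormal basis of $L^2(q)$. For $\alpha>0$ the operator $(\alpha-\Lop_q)^{m/2}$ is defined spectrally and is bounded below by $\alpha^{m/2}$. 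Hence $(\alpha-\Lop_q)^{-m/2}$ is an injective bounded map of $L^2(q)$, and $\mathcal G_{\alpha,m}$ with the graph norm is a Hilbert space isometric to $L^2(q)$.

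The central step is to identify $\mathcal G_{\alpha,m}$ with $H^m(M)$ with equivalent norms. For even integer $m$, elliptic regularity applied to $(\alpha-\Lop_q)^{m/2}$, an elliptic operator of order $m$ with smooth coefficients, gives $\|u\|_{H^m}\asymp\|(\alpha-\Lop_q)^{m/2}u\|_{L^2}$. Here $L^2(q)$ and $L^2(\mathrm{vol})$ are equivalent because $q$ is bounded above and below. For general real $m$ I would interpolate between even integers. Alternatively, I would observe that $(\alpha-\Lop_q)^{m/2}$ is an elliptic pseudodifferential operator of order $m$ by Seeley's theorem, which gives the equivalence directly. I expect this identification to be the main obstacle. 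The remainder of the argument is bookkeeping.

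Once $\mathcal G_{\alpha,m}=H^m(M)$, Sobolev embedding on the compact manifold gives $H^m\hookrightarrow C^1(M)$ when $m>d/2+1$. Point evaluation $u\mapsto u(x)$ is then continuous on $\mathcal G_{\alpha,m}$, so the space is a scalar reproducing kernel Hilbert space. Its kernel is the Green kernel of $(\alpha-\Lop_q)^{-m}$ with respect to $q\,\mathrm{vol}$, namely $K(x,y)=\sum_n(\alpha+\lambda_n)^{-m}e_n(x)e_n(y)$. The gradient map $\grad\colon\mathcal G_{\alpha,m}\to C^0(M;TM)$ is bounded and has kernel exactly the constants, because $M$ is connected. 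Giving the image the quotient norm $\|\grad u\|=\inf_c\|u+c\|_{\mathcal G_{\alpha,m}}$ makes $\grad$ an isometry from the closed quotient $\mathcal G_{\alpha,m}/\R$ onto its image, so the image is Hilbert. Point evaluation $\grad u\mapsto\grad u(x)$ is continuous because it factors through the $C^1$ embedding, and the infimum over constants does not affect the gradient. By Aronszajn's construction in the vector-valued form \eqref{vector-reproducing-eq}, the gradient image carries the operator-valued kernel $k(x,y)=\grad_x\grad_y\tilde K(x,y)$. Here $\tilde K$ is the reproducing kernel of the quotient, which is obtained by projecting $K$ off the constants, that is, by dropping the $n=0$ term.

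For the sharper requirement \eqref{kernel-inclusion-eq}, the same embedding with $m>d/2+2$ gives $H^m\hookrightarrow C^2(M)$, hence gradients in $C^1(M;TM)$ with a continuous inclusion. The final sentence of the statement follows from the first threshold. There the fields are only continuous, so $\Aop_q v=-\Div(qv)$ is a well-defined distribution, by pairing with $\grad f$ as in \eqref{transport-pairing-eq}, and is continuous from the gradient space to $\mathcal D'(M)$. This is precisely the hypothesis on $V_\rho$ needed in Proposition \ref{abstract-transport-prop}.
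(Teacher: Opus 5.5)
Your proposal is correct and follows essentially the same route as the paper. In both, elliptic functional calculus identifies the graph norm of $(\alpha-\Lop_q)^{m/2}$ with the $H^m$ norm, Sobolev embedding is applied at the thresholds $m>d/2+1$ and $m>d/2+2$, constants are quotiented from the gradient image, and Aronszajn's theorem supplies the kernels; you simply add details the paper leaves implicit, namely the interpolation or Seeley step for non-even $m$ and the explicit eigenfunction expansion of the scalar kernel and of its gradient version.
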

\begin{proof}
	Elliptic functional calculus identifies the graph norm with a Sobolev norm of order $m$. The inequality $m>d/2+1$ gives $C^1$ scalar potentials and hence continuous gradient evaluation. Quotienting by the closed kernel of the gradient produces the vector-field Hilbert space. The stronger inequality gives $C^2$ potentials and $C^1$ gradient fields. Continuity of evaluation supplies the corresponding reproducing kernels \cite{aronszajn1950theory}.
\end{proof}

The proposition does not assert that every preselected Stein kernel is a diffusion resolvent. It establishes a direct class for which kernel geometry is constructed from the generator and semigroup themselves.

The finite rank construction of Proposition \ref{finite-rank-prescription-prop} may be applied to any chosen collection of smooth Poisson fields $\grad(-\Lop_q)^{-1}g_j$. It produces a kernel which agrees exactly with the Otto particle response on their span, or applies any prescribed positive self-adjoint preconditioner there. Resolvent kernels offer an infinite dimensional extension whose comparison operator is determined by the spectral response of the diffusion.

\subsection{Stein factors and regularity}

Bounds on $\psi_g$ and its derivatives are Stein factors. The spectral estimate \eqref{stein-solution-bound-eq} controls the weighted square integrable velocity. Pointwise and higher derivative bounds require elliptic regularity, curvature estimates or coupling estimates for the semigroup \cite{barbour1988stein,gorham2019measuring,le2024diffusion}. These estimates decide whether the semigroup constructed field belongs to a selected reproducing kernel Hilbert space. The range condition in Theorem \ref{exponential-stein-covector-thm} is consequently a regularity statement about the Poisson equation.

\section{Feynman--Kac paths, cotangent equations and reaction transport}\label{feynman-kac-sec}

\subsection{Weighted semigroups}

Let $X_t$ have generator $\Lop$ and let $V\colon M\to\R$ be bounded below. The Feynman--Kac semigroup is
\begin{equation}\label{feynman-kac-semigroup-eq}
	\Pop_t^Vf(x)=\E_x\left[\exp\left(-\int_0^tV(X_s)\,\dd{s}\right)f(X_t)\right].
\end{equation}
It solves
\begin{gather}\label{feynman-kac-backward-eq}
	\begin{aligned}
		\partial_tu & = (\Lop-V)u, \\
		u(0) & = f.
	\end{aligned}
\end{gather}
under the usual regularity hypotheses \cite{kac1949distributions,delmoral2004feynman}. Suppose $V\geqslant-C$. The weighted semigroup then grows at most as $e^{Ct}$ in the natural supremum norm, and the resolvent identity
\begin{equation}\label{feynman-kac-resolvent-eq}
	\int_0^{\infty}e^{-\alpha t}\Pop_t^Vf\,\dd{t}=(\alpha-\Lop+V)^{-1}f
\end{equation}
holds for $\alpha>C$, or more generally for $\alpha$ above the semigroup growth bound. Every $\alpha>0$ is permitted when $V\geqslant0$.
Hence sample paths calculate scalar cotangent potentials solving elliptic or parabolic equations. Their spatial gradients give Otto velocities, and their kernel images give Stein velocities.

Individual diffusion paths serve as stochastic characteristics. Classical characteristics belong to first order equations and carry different pathwise information. The expectation in \eqref{feynman-kac-semigroup-eq} combines their terminal values and accumulated potential. The same paths define, at the law level, an unnormalised measure
\begin{equation}\label{unnormalised-fk-measure-eq}
	\widetilde\mu_t(f)=\E_{\mu_0}\left[\exp\left(-\int_0^tV(X_s)\,\dd{s}\right)f(X_t)\right].
\end{equation}
Its density solves
\begin{equation}\label{unnormalised-fk-forward-eq}
	\partial_t\widetilde\rho_t=\Lop^{*}\widetilde\rho_t-V\widetilde\rho_t.
\end{equation}
The potential term changes total mass and therefore points out of the probability manifold.

\subsection{Normalisation and Fisher--Rao reaction}

Let $Z_t=\int_M\widetilde\rho_t\,\dd{\mathrm{vol}}$ and $\rho_t=\widetilde\rho_t/Z_t$. The identity $\Lop1=0$ gives
\[
	\dot Z_t=-Z_t\E_{\rho_t}[V].
\]
A direct calculation gives
\begin{equation}\label{normalised-fk-eq}
	\partial_t\rho_t=\Lop^{*}\rho_t-\bigl(V-\E_{\rho_t}[V]\bigr)\rho_t.
\end{equation}
The second term has integral zero and is consequently tangent to the probability manifold.

The Fisher--Rao cotangent form on probability densities is
\begin{equation}\label{fisher-rao-cotangent-eq}
	\langle f,h\rangle_{T^{*}_{\rho,\Fish}}=\int_M\bigl(f-\E_{\rho}[f]\bigr)\bigl(h-\E_{\rho}[h]\bigr)\rho\,\dd{\mathrm{vol}}.
\end{equation}
Its Onsager operator is
\begin{equation}\label{fisher-rao-onsager-eq}
	\Kop^{\Fish}_{\rho}f=\rho\bigl(f-\E_{\rho}[f]\bigr).
\end{equation}
Indeed, pairing a covector $h$ with \eqref{fisher-rao-onsager-eq} gives \eqref{fisher-rao-cotangent-eq}. The expectation functional $\mathcal V(\rho)=\int_MV\rho\,\dd{\mathrm{vol}}$ has differential $[V]$; hence
\begin{equation}\label{feynman-kac-fisher-gradient-eq}
	-\Kop^{\Fish}_{\rho}\dd{\mathcal V}(\rho)=-\bigl(V-\E_{\rho}[V]\bigr)\rho.
\end{equation}
The normalised Feynman--Kac reaction is exactly Fisher--Rao descent of the expected potential.

When $\Lop^{*}\rho=-\Kop^{\Wass}_{\rho}\dd{\mathcal F}(\rho)$, equation \eqref{normalised-fk-eq} becomes
\begin{equation}\label{wfr-fk-eq}
	\partial_t\rho=-\Kop^{\Wass}_{\rho}\dd{\mathcal F}(\rho)-\Kop^{\Fish}_{\rho}V.
\end{equation}
Motivated by \eqref{wfr-fk-eq}, one may define the composite Stein--Fisher--Rao equation
\[
	\partial_t\rho=-\Kop^{\Stein}_{\rho}\dd{\mathcal F}(\rho)-\Kop^{\Fish}_{\rho}V.
\]
This changes the transport part to a nonlinear law evolution and is not generally the normalised Feynman--Kac equation of the original linear generator. The Wasserstein equation \eqref{wfr-fk-eq} is related to Hellinger--Kantorovich and Wasserstein--Fisher--Rao geometry \cite{chizat2018interpolating,liero2018optimal}; a precise metric interpretation requires the joint action and boundary conditions.

\subsection{Doob transforms and conditioned transport}

Suppose $h>0$ solves
\begin{equation}\label{ground-state-eq}
	(\Lop-V)h=-\lambda h.
\end{equation}
The Doob transformed generator is
\begin{equation}\label{doob-generator-eq}
	\Lop^hf=h^{-1}(\Lop-V+\lambda)(hf).
\end{equation}
A diffusion generator with covariance $a$ acquires the modified drift $a\grad\log h$. Thus the logarithmic differential $\grad\log h$ converts a path weight into an additional particle transport. The scalar $\log h$ is a cotangent potential obtained from the Feynman--Kac spectral problem, whilst $a\grad\log h$ is its mobility weighted velocity representative.

This construction parallels entropy descent. Relative entropy supplies the potential $\log(\rho/\pi)$ and the current velocity $-\grad\log(\rho/\pi)$. Feynman--Kac conditioning supplies the positive eigenfunction potential $\log h$ and the Doob drift $a\grad\log h$. Both are instances of a scalar cotangent being converted into a sample velocity by a mobility operator.

\subsection{Path measures and evaluation}

Let $Q_t$ be the restriction of the original diffusion law to paths on $[0,t]$ and define a family of tilted path measures by
\begin{equation}\label{tilted-path-law-eq}
	\frac{\dd{Q_t^V}}{\dd{Q_t}}=\frac1{Z_t}\exp\left(-\int_0^tV(X_s)\,\dd{s}\right).
\end{equation}
The normalised Feynman--Kac marginal measure is then exactly
\[
	\mu_t=(e_t)_{\#}Q_t^V.
\]
When $\mu_t$ has a density with respect to volume, that density is the $\rho_t$ of \eqref{normalised-fk-eq}.
The family $(Q_t^V)$ need not be projectively consistent, because extending the terminal time adds a future path weight. A single terminally tilted measure $Q_T^V$ has intermediate marginals involving both a forwards Feynman--Kac density and a backwards Feynman--Kac factor; these marginals are described by a Doob transform rather than by \eqref{normalised-fk-eq} alone. Path weighting therefore occurs before projection to Otto space. Equation \eqref{normalised-fk-eq} is the evaluation image of the time matched family \eqref{tilted-path-law-eq}, and information concerning temporal correlations and the accumulated potential is lost when only $(\rho_t)$ is retained.

The family of tilted path measures and the normalised reaction transport are equivalent under the time matched evaluation observation. Distinct tilts may have the same marginal at one-time, and a single terminal tilt carries backwards information absent from the instantaneous normalised equation. Evaluation therefore supplies a prescribed law equivalence whilst forgetting most of the path weighting.

\section{Abstract Wiener spaces and path laws}\label{infinite-dimensional-sec}

\subsection{A differentiable density core over Gaussian measure}

Let $(E,H,\gamma)$ be an abstract Wiener space: $E$ is a separable Banach space, $H$ is a separable Hilbert space continuously and densely embedded in $E$, and $\gamma$ is the centred Gaussian measure whose Cameron--Martin space is $H$. No translation invariant Lebesgue measure exists on an infinite dimensional Banach space; densities must therefore be defined relative to $\gamma$ or another reference measure.

Let $\mathbf D^{1,2}(\gamma)$ be the first Gaussian Sobolev space and let $D_H$ denote the closed Cameron--Martin derivative. Set
\[
	X=\mathbf D^{1,2}(\gamma)\cap L^{\infty}(\gamma)
\]
with norm $\|f\|_X=\|f\|_{\mathbf D^{1,2}}+\|f\|_{L^{\infty}}$. Consider
\begin{equation}\label{wiener-density-core-eq}
	\Prob_X^+(E,\gamma)=\left\{\rho\in X:\int_E\rho\,\dd{\gamma}=1,\ \essinf\rho>0\right\}.
\end{equation}
The condition $\essinf\rho>0$ is open in $L^{\infty}$, and the integral constraint is a complemented affine hyperplane. Hence \eqref{wiener-density-core-eq} is a Banach manifold modelled on
\[
	X_0=\left\{\sigma\in X:\int_E\sigma\,\dd{\gamma}=0\right\}.
\]
This core is narrower than the statistical manifolds of Newton and Pistone--Sempi, although it has the advantage that $D_H\log\rho=\rho^{-1}D_H\rho$ belongs to $L^2(\gamma;H)$. Newton's balanced and Hilbert charts permit larger classes of equivalent measures and can replace this core when their chart domains are intersected with the domain of the Cameron--Martin derivative \cite{newton2012hilbert,newton2016balanced,pistone1995infinite}.

\subsection{Gaussian continuity and the weighted Dirichlet form}

Let $\mathscr C$ be the smooth cylindrical functions on $E$. An $H$ valued field $v$ for which the right hand side is continuous defines the density tangent $\Aop_{\rho}v$ by
\begin{equation}\label{wiener-continuity-eq}
	\langle\Aop_{\rho}v,f\rangle=\int_E\langle D_Hf,v\rangle_H\rho\,\dd{\gamma}
\end{equation}
for every $f\in\mathscr C$.
This is the Gaussian divergence of $\rho v$ with the sign convention matching the finite dimensional continuity operator. If $\delta$ denotes the Malliavin divergence, defined as the adjoint of $D_H$, then $\Aop_{\rho}v=\delta(\rho v)$ in the distributional convention
\[
	\int_Ef\delta(\rho v)\,\dd{\gamma}=\int_E\langle D_Hf,v\rangle_H\rho\,\dd{\gamma}.
\]

The weighted form is
\begin{equation}\label{wiener-weighted-form-eq}
	\mathcal E_{\rho}(f,h)=\int_E\langle D_Hf,D_Hh\rangle_H\rho\,\dd{\gamma}.
\end{equation}
If $0<m\leqslant\rho\leqslant M<\infty$, this form is equivalent to the classical Ornstein--Uhlenbeck form and is closed on $\mathbf D^{1,2}(\gamma)/\R$. Its dual is the weighted negative Sobolev space. Define
\begin{equation}\label{wiener-otto-onsager-eq}
	\Kop^{\Wass}_{\rho}f=\Aop_{\rho}D_Hf.
\end{equation}

\begin{prop}\label{wiener-otto-prop}
	The operator \eqref{wiener-otto-onsager-eq} is the Riesz isomorphism from $\mathbf D^{1,2}(\gamma)/\R$, furnished with \eqref{wiener-weighted-form-eq}, to its Hilbert dual whenever $\rho\in\Prob_X^+(E,\gamma)$ is bounded above and below. The entropy relative to $\gamma$ has regular cotangent $[\log\rho]$, and its descending flow satisfies
	\begin{equation}\label{ou-density-flow-eq}
		\partial_t\rho=\Lop_{\gamma}\rho
	\end{equation}
	weakly, where $\Lop_{\gamma}$ is the Ornstein--Uhlenbeck generator determined by
	\begin{equation}\label{ou-dirichlet-eq}
		\int_Ef\Lop_{\gamma}h\,\dd{\gamma}=-\int_E\langle D_Hf,D_Hh\rangle_H\,\dd{\gamma}.
	\end{equation}
\end{prop}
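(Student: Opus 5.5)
The argument has two parts, mirroring the statement. The first is the Riesz identification, obtained from the abstract construction of Proposition \ref{abstract-transport-prop}, as in Theorem \ref{otto-riesz-thm}. The second is an entropy computation that uses the Gaussian integration by parts \eqref{ou-dirichlet-eq}.

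For the Riesz part, take $V_{\rho}=L^2(\rho\gamma;H)$ and define $\Aop_{\rho}$ by \eqref{wiener-continuity-eq}. Since $0<m\leqslant\rho\leqslant M$, the norm of $L^2(\rho\gamma;H)$ is equivalent to that of $L^2(\gamma;H)$. Hence for $f\in\mathbf D^{1,2}(\gamma)$ the functional $v\mapsto\int_E\langle D_Hf,v\rangle_H\rho\,\dd{\gamma}$ is continuous, and its Riesz representative is $D_Hf$ itself. This gives $\Bop^{\Wass}_{\rho}f=D_Hf$ and $\Kop^{\Wass}_{\rho}f=\Aop_{\rho}D_Hf$. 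Pairing with a cylindrical $h$ gives
\[
\langle h,\Kop^{\Wass}_{\rho}f\rangle=\mathcal E_{\rho}(h,f).
\]
Smooth cylindrical functions are dense in $\mathbf D^{1,2}(\gamma)$, and $\mathcal E_{\rho}$ is closed and equivalent to the Ornstein--Uhlenbeck form. Therefore this identity extends to all $h\in\mathbf D^{1,2}(\gamma)$. This shows that $\Kop^{\Wass}_{\rho}$ is precisely the map $[f]\mapsto\mathcal E_{\rho}(\cdot,f)$ from the Hilbert space $(\mathbf D^{1,2}(\gamma)/\R,\mathcal E_{\rho})$ to its dual. The Riesz theorem then makes it an isometric isomorphism. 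Coercivity on the quotient needs the Gaussian Poincaré inequality $\Var_{\gamma}(f)\leqslant\int|D_Hf|_H^2\,\dd{\gamma}$ together with the lower bound $m\leqslant\rho$. The kernel on gradients is exactly the constants, because $D_Hf=0$ forces $f$ to be constant by that same inequality.

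For the entropy part, write $\Ent_{\gamma}(\rho)=\int_E\rho\log\rho\,\dd{\gamma}$ and differentiate along $\rho+t\sigma$ with $\sigma\in X_0$. This differentiation is legitimate because $\rho$ is bounded above and below, so $\log$ is smooth on the range and the difference quotients converge in $L^{\infty}$. The result is the regular representative $\log\rho+1$, and the constant is killed by $\int_E\sigma\,\dd{\gamma}=0$. The chain rule for $D_H$ under a $C^1$ function applied to an $L^{\infty}$-bounded element of $\mathbf D^{1,2}$ gives $\log\rho\in\mathbf D^{1,2}(\gamma)$ with $D_H\log\rho=\rho^{-1}D_H\rho$. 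The descending velocity is therefore $-D_H\log\rho$. For every cylindrical $f$ the weak tangent is
\[
\langle\partial_t\rho,f\rangle=-\int_E\langle D_Hf,\rho^{-1}D_H\rho\rangle_H\rho\,\dd{\gamma}=-\int_E\langle D_Hf,D_H\rho\rangle_H\,\dd{\gamma}=\int_Ef\,\Lop_{\gamma}\rho\,\dd{\gamma},
\]
where the last equality is \eqref{ou-dirichlet-eq}, read weakly since $\rho$ lies only in the form domain. This is the weak form of \eqref{ou-density-flow-eq}.

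The main obstacle is analytic rather than algebraic. It lies in the chain rule $D_H\log\rho=\rho^{-1}D_H\rho$ and in the density of cylindrical functions needed to pass between test classes. I would handle the chain rule by approximating $\rho$ in $\mathbf D^{1,2}(\gamma)$ by cylindrical functions truncated to $[m/2,2M]$. I would then apply the closedness of $D_H$, using dominated convergence for the factor $\rho^{-1}$.
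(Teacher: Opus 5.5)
Your proposal is correct and follows the paper's argument. That argument has three steps: closedness and coercivity of $\mathcal E_{\rho}$ on $\mathbf D^{1,2}(\gamma)/\R$ from equivalence with the Ornstein--Uhlenbeck form and the Gaussian Poincar\'e inequality; the Riesz identification as in Theorem \ref{otto-riesz-thm}; and testing $-\Kop^{\Wass}_{\rho}\log\rho$ against cylindrical $f$ before invoking \eqref{ou-dirichlet-eq}. You also supply justifications that the paper takes for granted from its choice of the core $X$, namely the entropy differential $[\log\rho]$ and the chain rule $D_H\log\rho=\rho^{-1}D_H\rho$.
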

\begin{proof}
	Closedness and coercivity on the quotient follow from equivalence with the Gaussian Sobolev form and the Gaussian Poincar\'e inequality. The Riesz argument of Theorem \ref{otto-riesz-thm} therefore applies. Testing $-\Kop^{\Wass}_{\rho}\log\rho$ against $f\in\mathscr C$ identifies the entropy flow:
	\[
		\langle-\Kop^{\Wass}_{\rho}\log\rho,f\rangle=-\int_E\langle D_Hf,D_H\log\rho\rangle_H\rho\,\dd{\gamma}=-\int_E\langle D_Hf,D_H\rho\rangle_H\,\dd{\gamma}.
	\]
	Formula \eqref{ou-dirichlet-eq} identifies this distribution with $\Lop_{\gamma}\rho$.
\end{proof}

Let $\pi=Z^{-1}e^{-V}\gamma$. A density $\mu=\rho\gamma$ has relative entropy
\[
	\Ent_{\pi}(\rho)=\int_E\rho\log\rho\,\dd{\gamma}+\int_EV\rho\,\dd{\gamma}+\log Z.
\]
Its cotangent is $[\log\rho+V]$. The invariant infinite dimensional statement is first made at the level of forms: whenever the corresponding gradient form with logarithmic drift is closable and quasi-regular, entropy descent is its weak Fokker--Planck equation. A Hilbert state space realisation with $E$ and $\gamma=\mathcal N(0,C)$ has the formal generator
\[
	\Lop_Vf=\tr(CD^2f)-\langle x+C\grad V(x),Df(x)\rangle_E
\]
and stochastic equation
\[
	\dd{X_t}=-\bigl(X_t+C\grad V(X_t)\bigr)\,\dd{t}+\sqrt{2C}\,\dd{W_t},
\]
subject to the usual domain, trace and well posedness assumptions. A general abstract Wiener space need not place these terms in one Hilbert state space; the closed Dirichlet-form generator is the invariant formulation \cite{miyahara1981infinite,daPrato2014stochastic,fukushima2011dirichlet}.

\subsection{The infinite dimensional Stein operator}

Let $\Hspace_k$ be a Hilbert space of $H$ valued vector fields on $E$, continuously included in $L^2(\rho\gamma;H)$. An operator valued kernel $k(x,y)\in\mathcal L(H)$ gives such a space under suitable boundedness and measurability assumptions. Let
\[
	\Iop_{\rho}\colon\Hspace_k\to L^2(\rho\gamma;H)
\]
be the inclusion. The adjoint is defined weakly by
\begin{equation}\label{wiener-kernel-adjoint-eq}
	\langle\Iop_{\rho}^{*}u,v\rangle_{\Hspace_k}=\int_E\langle u(x),v(x)\rangle_H\rho(x)\,\dd{\gamma}(x).
\end{equation}
A regular cotangent $f$ determines
\begin{equation}\label{wiener-stein-velocity-eq}
	\Bop^{\Stein}_{\rho}f=\Iop_{\rho}^{*}D_Hf
\end{equation}
and
\begin{equation}\label{wiener-stein-onsager-eq}
	\Kop^{\Stein}_{\rho}f=\Aop_{\rho}\Iop_{\rho}^{*}D_Hf.
\end{equation}
The quotient and completion proof of Theorem \ref{stein-musical-thm} applies verbatim because it uses only Hilbert adjoints and the continuity pairing. The resulting entropy flow is an infinite dimensional Stein variational transport equation. Jia, Li and Meng develop this construction for Hilbert space inverse problems, including operator valued kernels and preconditioning adapted to the Gaussian reference structure \cite{jia2022stein}.

The Cameron--Martin restriction is essential. Translations outside $H$ are singular with respect to $\gamma$, so an admissible sample velocity must take values in directions along which the reference measure admits differential calculus. A covariance or precision operator often appears in the kernel and mobility to ensure that the velocity belongs to $H$. The finite dimensional formula $\grad f$ is therefore replaced by $D_Hf$, not by an arbitrary derivative in the Banach state space.

\subsection{Closability, quasi-regularity and processes}

Fix a reference measure $\mu=\rho\gamma$. Closability and quasi-regularity of
\[
	\mathcal E_{\mu}(f,h)=\int_E\langle D_Hf,D_Hh\rangle_H\,\dd{\mu}
\]
produce a state-space Markov process associated with that fixed symmetric form. Boundedness above and below of $\rho$ gives closability by comparison with the Gaussian form. More singular Gibbs measures require integration-by-parts estimates, logarithmic derivatives and capacity arguments; quasi-regularity then supplies a process outside an exceptional set \cite{fukushima2011dirichlet}. Infinite dimensional Fokker--Planck equations with log-concave reference measures can also be obtained as gradient flows and dual semigroup equations \cite{ambrosio2009fokker}.

This construction differs from a stochastic process whose law would follow the nonlinear density-space Stein equation $\partial_t\rho=-\Kop^{\Stein}_{\rho}\dd{\Ent}(\rho)$. The former is the Markov realisation of a fixed Dirichlet form; the latter is a nonlinear deterministic evolution on laws. The analytic hierarchy is therefore
\begin{enumerate}[label=(\roman*)]
	\item a differentiable density chart supplies regular tangent and cotangent variations;
	\item a closable Cameron--Martin derivative and divergence define the continuity pairing;
	\item weighted and kernel velocity norms define fibrewise Otto and Stein Hilbert pairs;
	\item a fixed quasi-regular Dirichlet form produces a state-space Markov process;
	\item a superposition or martingale problem theorem may connect a weak linear Fokker--Planck solution to path laws.
\end{enumerate}
No step in this hierarchy automatically constructs a pathwise realisation of a nonlinear Stein gradient flow.

\subsection{Wiener path space and marginal velocities}

Take $E=C_0([0,T];\R^d)$ with Wiener measure $\gamma$ and Cameron--Martin space
\[
	H=\left\{h:h(0)=0,\ h \text{ absolutely continuous},\ \int_0^T|\dot h_t|^2\,\dd{t}<\infty\right\}.
\]
Let $Q=\rho\gamma$ be a path law. A path space velocity is an $H$ valued field $h(\omega)$. Consider the infinitesimal perturbation $\omega\mapsto\omega+\varepsilon h(\omega)$ on cylindrical test functionals. Its image under evaluation at time $t$ is determined by a conditional expectation.

\begin{prop}\label{path-marginal-velocity-prop}
	Let $h\in L^2(Q;H)$ and suppose the infinitesimal path perturbation is admissible. Set $\mu_t=(e_t)_{\#}Q$ for each $t$ and
	\begin{equation}\label{conditional-marginal-velocity-eq}
		v_t(x)=\E_Q[h_t\mid\omega_t=x].
	\end{equation}
	Then $v_t\in L^2(\mu_t;\R^d)$ and
	\begin{equation}\label{path-marginal-continuity-eq}
		\partial_{\varepsilon}\big|_{\varepsilon=0}(e_t)_{\#}(\id+\varepsilon h)_{\#}Q=-\Div(\mu_tv_t)
	\end{equation}
	in distributions. Moreover,
	\[
		\|v_t\|_{L^2(\mu_t)}\leqslant\|h_t\|_{L^2(Q)}\leqslant\sqrt{t}\,\|h\|_{L^2(Q;H)}.
	\]
\end{prop}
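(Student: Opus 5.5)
The plan is to test the pushed-forward measures against a smooth bounded test function $f\in C_b^\infty(\R^d)$ and differentiate under the expectation, exactly as in the proof of Lemma \ref{pushforward-differential-lem}. Since $e_t$ is linear, $e_t(\omega+\varepsilon h(\omega))=\omega_t+\varepsilon h_t(\omega)$, so
\[
	\int_{\R^d}f\,\dd{\bigl((e_t)_{\#}(\id+\varepsilon h)_{\#}Q\bigr)}=\E_Q\bigl[f(\omega_t+\varepsilon h_t(\omega))\bigr].
\]
The difference quotient is bounded by $\|\grad f\|_{\infty}|h_t|$, which is integrable because $h_t\in L^2(Q)$, as shown below. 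Dominated convergence therefore gives the derivative at $\varepsilon=0$ as $\E_Q[\langle\grad f(\omega_t),h_t\rangle]$. The admissibility hypothesis is used only to ensure that the perturbed measures are genuine path laws, so that the left hand side of \eqref{path-marginal-continuity-eq} is meaningful. The differentiation itself is elementary once pointwise evaluation $h_t$ makes sense, which it does for $h\in H\subset C_0$.

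Next I condition on $\omega_t$. Since $\grad f(\omega_t)$ is $\sigma(\omega_t)$ measurable, the tower property gives
\[
	\E_Q[\langle\grad f(\omega_t),h_t\rangle]=\E_Q[\langle\grad f(\omega_t),\E_Q[h_t\mid\omega_t]\rangle]=\int\langle\grad f,v_t\rangle\,\dd{\mu_t}.
\]
Here $v_t$ is a Borel version of the conditional expectation obtained from the Doob--Dynkin lemma. The last integral is by definition the distributional pairing of $-\Div(\mu_tv_t)$ with $f$, as in Proposition \ref{empirical-continuity-prop}, and this proves \eqref{path-marginal-continuity-eq}.

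For the bounds, conditional Jensen gives $|v_t(\omega_t)|^2\leqslant\E_Q[|h_t|^2\mid\omega_t]$. Taking expectations yields $\|v_t\|_{L^2(\mu_t)}\leqslant\|h_t\|_{L^2(Q)}$, and in particular $v_t\in L^2(\mu_t;\R^d)$. Pathwise, Cauchy--Schwarz gives
\[
	|h_t|=\Bigl|\int_0^t\dot h_s\,\dd{s}\Bigr|\leqslant\sqrt t\,\|h\|_H,
\]
and squaring and integrating against $Q$ gives the second inequality.

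The only step needing care is the justification of differentiating under $\E_Q$. The integrability bound $\E_Q|h_t|\leqslant\sqrt t\,\|h\|_{L^2(Q;H)}$ must be available first, so I will prove the Cauchy--Schwarz estimate before the differentiation. The measurability of the map $\omega\mapsto h_t(\omega)$ also needs to be checked; it follows from continuity of the evaluation $H\to\R^d$.
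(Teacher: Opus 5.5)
Your proposal is correct and follows the paper's proof: test the pushed-forward marginal against a smooth function, differentiate at $\varepsilon=0$, condition on $\omega_t$ to produce $v_t$, then use conditional Jensen for the first bound and Cauchy--Schwarz on $h_t=\int_0^t\dot h_s\,\dd{s}$ for the second. Relative to the paper you add the dominated-convergence and measurability justifications it leaves implicit, and you use $C_b^{\infty}$ test functions where the paper uses $C_c^{\infty}$; neither change affects the conclusion.
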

\begin{proof}
	For $f\in C_c^{\infty}(\R^d)$, differentiation and conditioning give
	\[
		\partial_{\varepsilon}\big|_{\varepsilon=0}\int_Ef(\omega_t+\varepsilon h_t(\omega))\,\dd{Q}(\omega)=\int_{\R^d}\grad f(x)\cdot v_t(x)\,\dd{\mu_t}(x),
	\]
	which is \eqref{path-marginal-continuity-eq}. Conditional Jensen gives the first norm bound, and $|h_t|\leqslant\sqrt{t}(\int_0^t|\dot h_s|^2\,\dd{s})^{1/2}$ gives the second.
\end{proof}

The path-to-marginal tangent map has kernel
\[
	\left\{h:-\Div\bigl(\mu_t\E[h_t\mid\omega_t=\cdot]\bigr)=0\right\}.
\]
The stronger condition $\E[h_t\mid\omega_t]=0$ is sufficient but not necessary. The marginal tangent sees the conditional mean velocity only modulo $\mu_t$ divergence free fields, and Otto's horizontal representative is its $L^2(\mu_t)$ gradient projection. Evaluation loses information twice: conditioning removes fluctuations invisible at time $t$, and the continuity operator removes vertical marginal rearrangements.

The Feynman--Kac change of measure \eqref{tilted-path-law-eq} occurs on this path manifold before evaluation. A path space cotangent such as $\int_0^TV(\omega_t)\,\dd{t}$ changes the entire trajectory law through reweighting, and its evaluation marginals solve the normalised reaction transport equation. When a path space cotangent is converted by a Cameron--Martin Onsager map into an admissible velocity $h$, Proposition \ref{path-marginal-velocity-prop} shows that $\E_Q[h_t\mid\omega_t=x]$ is the induced marginal velocity. Reweighting and transport are consequently distinguished, whilst stochastic characteristics descend to Otto or Stein space through evaluation and conditional expectation.

\section{The particle-to-functional hierarchy and its equivalence relations}\label{synthesis-sec}

The preceding constructions may now be collected without suppressing the different levels at which an equivalence can be asserted.

\begin{theorem}\label{common-otto-stein-thm}
	Let $M$ be a connected closed Riemannian manifold, let $\Prob^{\infty}_+(M)$ be its Fr\'echet manifold of smooth positive probability densities and let $\Hspace_k\hookrightarrow C^1(M;TM)$ be a vector-valued reproducing kernel Hilbert space.
	\begin{enumerate}[label=(\roman*)]
		\item The continuity operator $\Aop_{\rho}v=-\Div(\rho v)$ is the differential of the pushforward action, is surjective from smooth vector fields onto $C^{\infty}_0(M)$ and identifies a smooth density tangent with a particle velocity modulo $\Ker\Aop_{\rho}$.
		\item The weighted form $\int_M\rho\langle\grad f,\grad h\rangle_g\,\dd{\mathrm{vol}}$ defines the Otto cotangent $\dot H^1_{\rho}$, tangent $\dot H^{-1}_{\rho}$ and Riesz isomorphism $\Kop^{\Wass}_{\rho}f=-\Div(\rho\grad f)$.
		\item The quotient range $T_{\rho,\Stein}=\Aop_{\rho}(\Hspace_k)$ with its minimum-$\Hspace_k$ norm is Hilbert. The kernel form $\langle\Iop_{\rho}^{*}\grad f,\Iop_{\rho}^{*}\grad h\rangle_{\Hspace_k}$ defines, after quotienting its full nullspace and completing, a Stein cotangent. The map $\Kop^{\Stein}_{\rho}f=-\Div(\rho\Iop_{\rho}^{*}\grad f)$ is an isometric isomorphism from this cotangent onto the full quotient tangent.
		\item The hierarchy associated with a differentiable functional is
		\[
			\mathcal F\longmapsto\dd{\mathcal F}(\rho)=[f]\longmapsto\grad f\longmapsto-\Bop^{\bullet}_{\rho}f\longmapsto-\Aop_{\rho}\Bop^{\bullet}_{\rho}f.
		\]
		The four arrows respectively pass from a functional to a density covector, from the covector to a sample space force, from the force to a selected particle velocity, and from the velocity to the law tangent.
		\item Let $\Mop_{\rho}=\mathsf{P}^{\mathrm{hor}}_{\rho}\Iop_{\rho}\Iop_{\rho}^{*}$ on the Otto horizontal space. Then $\Kop^{\Stein}_{\rho}f=\Aop_{\rho}\Mop_{\rho}\grad f$. Equality $\Mop_{\rho}\grad f=\grad f$ is equivalent to equality of the law tangents; equality $\Iop_{\rho}\Iop_{\rho}^{*}\grad f=\grad f$ is equivalent to equality of the particle fields; positive proportionality gives the same unparametrised law curve. Formula \eqref{prescribed-equivalence-eq} characterises equality under any prescribed differentiable observation, whilst $\Mop_{\rho}=\id$ on a closed force space characterises equality of the law geometries on all of its directions.
		\item The exact defect is given by \eqref{tangent-defect-eq}. Spectral bounds on $\Mop_{\rho}$ give the cotangent and tangent comparisons \eqref{cotangent-comparison-eq} and \eqref{tangent-metric-comparison-eq}. Every positive self-adjoint response on a finite dimensional smooth horizontal force space is realised by a finite rank vector-valued kernel.
		\item Relative entropy has regular cotangent $[\log(\rho/\pi)]$. Its Wasserstein and Stein descents agree precisely under Corollary \ref{entropy-equivalence-cor}. A regular minimal exponential family admits smooth Stein solutions which realise any prescribed Fisher mirror covector descent; exact membership in a chosen kernel space is the remaining range condition.
	\end{enumerate}
	All assertions concerning Hilbert pairs are fibrewise. Smooth variation of the Stein fibres, a connection or a globally well posed gradient flow requires additional uniform range and regularity hypotheses.
\end{theorem}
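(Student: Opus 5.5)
The theorem is a synthesis statement, so I will prove it item by item, assembling results established earlier in the paper and checking only that their hypotheses are met under the standing assumptions. These assumptions are that $M$ is connected and closed, $\rho\in\Prob^{\infty}_+(M)$, and $\Hspace_k\hookrightarrow C^1(M;TM)$ continuously.

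\emph{Items (i), (ii) and (iii).} Item (i) is Lemma \ref{pushforward-differential-lem} together with Proposition \ref{continuity-quotient-prop}, whose surjectivity rests on the weighted Poisson equation of Lemma \ref{weighted-poisson-lem}. Item (ii) is Theorem \ref{otto-riesz-thm}. Item (iii) follows from the abstract construction of Proposition \ref{abstract-transport-prop} applied as in Theorem \ref{stein-musical-thm}. To use that construction, two facts must be checked.
\begin{enumerate}[label=(\alph*)]
\item The map $\Aop_{\rho}\colon\Hspace_k\to\mathcal D'(M)$ is continuous. This holds because the $C^1$ embedding makes $-\Div(\rho v)$ a continuous function of $v$ in $C^0$.
\item The functional $v\mapsto\int_M\rho\langle\grad f,v\rangle_g\,\dd{\mathrm{vol}}$ is bounded on $\Hspace_k$ for every smooth $f$. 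This follows from the bounded inclusion $\Iop_{\rho}$.
\end{enumerate}
Once these hold, the kernel $N^{\Stein}_{\rho}$ is closed and the quotient range is Hilbert. The density argument in the proof of Theorem \ref{stein-musical-thm} then gives surjectivity onto the whole of $T_{\rho,\Stein}$, not merely onto the closure of $\Aop_{\rho}\Bop^{\Stein}_{\rho}(C^\infty)$. That argument uses the fact that $\int_M\rho\langle w,\grad f\rangle_g\,\dd{\mathrm{vol}}=0$ for all smooth $f$ forces $\Div(\rho w)=0$ distributionally.

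\emph{Item (iv).} This is definitional. It restates the chain displayed after Remark \ref{fibrewise-status-rem}, together with the identifications $\Bop^{\Wass}_{\rho}=\grad$ and $\Bop^{\Stein}_{\rho}=\Iop_{\rho}^{*}\grad$ from \eqref{stein-velocity-map-eq}.

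\emph{Items (v) and (vi).} For item (v), Lemma \ref{effective-mobility-lem} gives $\Kop^{\Stein}_{\rho}f=\Aop_{\rho}\Mop_{\rho}\grad f$. Theorem \ref{comparison-levels-thm} then gives the two equivalences, one for law tangents and one for particle fields. Proposition \ref{time-change-prop} gives the unparametrised law curve, the definition \eqref{prescribed-equivalence-eq} gives the observation statement, and Corollary \ref{full-fibre-equivalence-cor} gives the statement on closed force spaces. For item (vi), the defect identity \eqref{tangent-defect-eq} is part (iv) of Theorem \ref{comparison-levels-thm}. The spectral comparisons are Proposition \ref{metric-comparison-prop}, and the finite rank realisation is Proposition \ref{finite-rank-prescription-prop}.

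\emph{Item (vii).} Lemma \ref{entropy-differential-lem} supplies the covector $[\log(\rho/\pi)]$, and Corollary \ref{entropy-equivalence-cor} gives the agreement criterion. For exponential families, Theorem \ref{exponential-stein-covector-thm} and Corollary \ref{general-mirror-cor} give the descent statements. The smooth Stein solutions they require are provided by Remark \ref{smooth-stein-solutions-rem}, or by Theorem \ref{semigroup-stein-solution-thm} upgraded to smoothness through elliptic regularity. The condition that these solutions lie in $\Hspace_k$ remains an explicit hypothesis, exactly as the statement says.

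\emph{Main obstacle.} The one genuinely delicate point is the continuity pairing in item (iii), namely showing that $\Mop_{\rho}$ captures exactly the law-relevant part of $\Iop_{\rho}\Iop_{\rho}^{*}\grad f$. Two facts are needed.
\begin{enumerate}[label=(\alph*)]
\item The Weyl-type decomposition $L^2(\rho;TM)=H^{\Wass}_{\rho}\oplus\Ker\Aop_{\rho}$, with $\Aop_{\rho}$ extended distributionally to all of $L^2(\rho;TM)$.
\item The kernel of $\Aop_{\rho}$ on $\Hspace_k$ coincides with $\Hspace_k\cap\Ker\Aop_{\rho}$ in $L^2(\rho;TM)$.
\end{enumerate}
I would verify (a) first: the orthogonal complement of the gradients $\grad f$, $f\in C^\infty(M)$, in $L^2(\rho;TM)$ is exactly the set of distributionally $\rho$-divergence-free fields. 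Fact (b) then follows because the inclusion $\Iop_{\rho}$ is injective, which holds since the fields are continuous and $\rho>0$.

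The final sentence of the theorem, on fibrewise status, needs no proof. It is a scope restriction recorded in Remarks \ref{fibrewise-status-rem} and \ref{scope-rem}.
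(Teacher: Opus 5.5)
Your proposal is correct and is essentially the paper's own proof. The paper likewise proves the synthesis theorem item by item, citing Lemma~\ref{pushforward-differential-lem}, Lemma~\ref{weighted-poisson-lem}, Proposition~\ref{continuity-quotient-prop}, Theorem~\ref{otto-riesz-thm}, Theorem~\ref{stein-musical-thm}, Lemma~\ref{effective-mobility-lem}, Theorem~\ref{comparison-levels-thm}, Corollary~\ref{full-fibre-equivalence-cor}, Propositions~\ref{time-change-prop}, \ref{prescribed-observation-prop}, \ref{metric-comparison-prop} and \ref{finite-rank-prescription-prop}, Corollary~\ref{entropy-equivalence-cor}, Theorem~\ref{exponential-stein-covector-thm} and Remark~\ref{smooth-stein-solutions-rem}.

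The paper leaves implicit the hypothesis checks you add, and they are correct: continuity of $\Aop_{\rho}$ on $\Hspace_k$, boundedness of the transport pairing, and the weighted splitting $L^2(\rho;TM)=H^{\Wass}_{\rho}\oplus\Ker\Aop_{\rho}$. Note that this splitting is what item (v) depends on, not item (iii) as you labelled it.
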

\begin{proof}
	Statement (i) is Lemma \ref{pushforward-differential-lem}, Lemma \ref{weighted-poisson-lem} and Proposition \ref{continuity-quotient-prop}. Statement (ii) is Theorem \ref{otto-riesz-thm} and Proposition \ref{minimal-kinetic-prop}. Statement (iii) is Theorem \ref{stein-musical-thm}. Statement (iv) follows from \eqref{transport-pairing-eq}, Proposition \ref{abstract-transport-prop} and \S\ref{functional-sec}. Statement (v) is Lemma \ref{effective-mobility-lem}, Theorem \ref{comparison-levels-thm}, Corollary \ref{full-fibre-equivalence-cor}, Proposition \ref{time-change-prop} and Proposition \ref{prescribed-observation-prop}. Statement (vi) is Theorem \ref{comparison-levels-thm}, Proposition \ref{metric-comparison-prop} and Proposition \ref{finite-rank-prescription-prop}. Statement (vii) is Lemma \ref{entropy-differential-lem}, Corollary \ref{entropy-equivalence-cor}, Theorem \ref{exponential-stein-covector-thm} and Remark \ref{smooth-stein-solutions-rem}.
\end{proof}

The objects in the hierarchy can be distinguished completely.
\begin{enumerate}[label=(\roman*)]
	\item A particle position is a point of the sample space, whilst a particle velocity is a tangent vector evaluated at that point.
	\item A particle field is a vector field on the sample space. Its empirical evaluation moves a finite configuration; its population image under $\Aop_{\rho}$ moves the law.
	\item A density tangent is a zero mass signed variation, smooth on the Fr\'echet core and distributional after completion. It remembers a particle field only modulo $\Ker\Aop_{\rho}$.
	\item A regular density covector is a scalar potential modulo constants. Its spatial differential is a force on the sample space.
	\item A differentiable functional assigns the covector. The chosen geometry assigns the response of particles and hence the tangent of the law.
	\item A generator value $\Lop^{*}\rho$, at a density in its domain, is a distributional density tangent and is smooth only on an invariant regular core. A tangent propagator $D\Phi_t(\rho)$ is an operator carrying one density tangent to another.
\end{enumerate}

The downward variational hierarchy is
\[
	\begin{tikzcd}[column sep=large,row sep=large]
		\mathcal F \arrow[r,"\dd"] & {[f]} \arrow[r,"\grad\text{ or }D_H"] & u_{\rho} \arrow[r,shift left=1.3ex,"\Bop^{\Wass}_{\rho}"] \arrow[r,shift right=1.3ex,"\Bop^{\Stein}_{\rho}"'] & v_{\rho}^{\bullet} \arrow[r,"\Aop_{\rho}"] \arrow[d,"\mathrm{evaluation}"] & \dot\rho^{\bullet} \\
		& & & (v_{\rho}^{\bullet}(x_1),\ldots,v_{\rho}^{\bullet}(x_N)) \arrow[r] & \partial_t\mu^N.
	\end{tikzcd}
\]
The upward descriptive hierarchy is
\[
	\begin{tikzcd}[column sep=large]
		(x_1,\ldots,x_N) \arrow[r,"\mathrm{empirical}"] & \mu^N \arrow[r,"\mathrm{limit\ or\ law}"] & \rho \arrow[r,"\mathcal O\text{ or }\mathcal F"] & \text{observables and functionals}.
	\end{tikzcd}
\]
A comparison prescription chooses where in these diagrams equality is demanded. Equality before evaluation identifies particles. Equality after $\Aop_{\rho}$ identifies laws. Equality after $D\mathcal O$ identifies only the selected observables. Positive colinearity identifies unparametrised law trajectories. Equivalence of the quadratic forms identifies costs and dissipation scales. Equality of large deviations actions additionally identifies the microscopic fluctuation structure. These relations form a hierarchy of their own; each is appropriate to a different question, and none should be substituted for another without stating what information has been retained.

\appendix

\section{Weighted elliptic estimates and smooth dependence}\label{elliptic-appendix-sec}

Let $s>d/2+1$ and define the open subset
\[
	\mathcal U^s_+(M)=\left\{\rho\in H^s(M):\int_M\rho\,\dd{\mathrm{vol}}=1,\ \inf_M\rho>0\right\}.
\]
Fix $m>0$, $R<\infty$ and a subset $\mathcal B\subset\mathcal U^s_+(M)$ on which $\rho\geqslant m$ and $\|\rho\|_{H^s}\leqslant R$. Define
\[
	\Kop^{\Wass}_{\rho}\phi=-\Div(\rho\grad\phi).
\]

\begin{prop}\label{sobolev-elliptic-isomorphism-prop}
	For every integer $r$ with $0\leqslant r\leqslant s-1$, the operator
	\[
		\Kop^{\Wass}_{\rho}\colon H^{r+1}(M)/\R\to H^{r-1}_0(M)
	\]
	is an isomorphism, with the weak interpretation $H^1/\R\to H^{-1}_0$ when $r=0$. The set $\mathcal B$ admits the uniform estimate
	\begin{equation}\label{weighted-elliptic-estimate-eq}
		\|\phi\|_{H^{r+1}/\R}\leqslant C(m,R,M,g,r,s)\|\Kop^{\Wass}_{\rho}\phi\|_{H^{r-1}}.
	\end{equation}
\end{prop}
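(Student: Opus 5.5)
The plan is to argue by induction on $r$: the case $r=0$ comes from Lax--Milgram with explicit constants, and each higher $r$ comes from difference-quotient or commutator regularity, using only the quantities $m$, $R$ and the geometry of $(M,g)$ so that every constant is uniform over $\mathcal B$.

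\emph{Base case $r=0$.} Since $s>d/2+1$, Sobolev embedding places $\rho$ in $C^1(M)$ with $\|\rho\|_{C^1}\leqslant C_{\mathrm{Sob}}R$. The form $a_\rho$ from Lemma \ref{weighted-poisson-lem} then satisfies
\[
	m\|\grad\phi\|_{L^2}^2\leqslant a_\rho(\phi,\phi)\leqslant C_{\mathrm{Sob}}R\,\|\grad\phi\|_{L^2}^2 .
\]
Poincar\'e's inequality on $H^1/\R$ gives coercivity with a constant depending only on $m$ and $(M,g)$. Lax--Milgram then makes $\Kop^{\Wass}_\rho\colon H^1/\R\to H^{-1}_0$ an isomorphism, where $H^{-1}_0$ is the annihilator of constants. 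It also gives the bound $\|\phi\|_{H^1/\R}\leqslant C(m,M,g)\|\Kop^{\Wass}_\rho\phi\|_{H^{-1}}$, since the dual norm of a zero-mass functional on $H^1/\R$ is comparable to its $H^{-1}$ norm.

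\emph{Inductive step.} Suppose the estimate holds at level $r-1$, with $1\leqslant r\leqslant s-1$, and let $\sigma=\Kop^{\Wass}_\rho\phi\in H^{r-1}_0$. Expand the operator as
\[
	\rho\Delta\phi=-\sigma-\langle\grad\rho,\grad\phi\rangle_g ,
\]
so that
\[
	\Delta\phi=-\rho^{-1}\bigl(\sigma+\langle\grad\rho,\grad\phi\rangle_g\bigr).
\]
Standard regularity for the Laplacian on closed manifolds gives
\[
	\|\phi\|_{H^{r+1}/\R}\leqslant C\|\Delta\phi\|_{H^{r-1}}.
\]
It then suffices to bound the right-hand side in $H^{r-1}$. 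For this I will use tame product estimates in the following two forms:
\begin{itemize}
	\item $\|uv\|_{H^{k}}\lesssim\|u\|_{H^{s-1}}\|v\|_{H^{k}}$, valid for $0\leqslant k\leqslant s-1$ because $s-1>d/2$;
	\item $\|\rho^{-1}\|_{H^{s}}\leqslant C(m,R)$, obtained by composing with the smooth function $t\mapsto1/t$ on $[m,\infty)$.
\end{itemize}
Together these give
\[
	\|\langle\grad\rho,\grad\phi\rangle_g\|_{H^{r-1}}\lesssim R\,\|\phi\|_{H^{r}/\R}.
\]
The inductive hypothesis bounds $\|\phi\|_{H^r/\R}$ by $\|\sigma\|_{H^{r-2}}\leqslant\|\sigma\|_{H^{r-1}}$. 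Multiplying by $\rho^{-1}$ costs only $C(m,R)$. Surjectivity at level $r$ follows from the same argument: solve at level $0$, then bootstrap regularity.

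For $r=1$ the coefficient term $\langle\grad\rho,\grad\phi\rangle_g$ only needs to be controlled in $L^2$. This holds because $\grad\rho\in L^\infty$ by the $C^1$ embedding above.

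\emph{Main obstacle.} The delicate point is the top range $r$ close to $s-1$. There $\grad\rho$ lies only in $H^{s-1}$, and the product estimate has to land in $H^{r-1}$ without consuming derivatives that $\rho$ does not have. The condition $s-1>d/2$, which makes $H^{s-1}$ an algebra, is exactly what allows this; I would state the multiplier lemma explicitly with constants depending only on $(M,g,r,s)$.

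Uniformity over $\mathcal B$ is automatic, because every constant in the argument was expressed through $m$ and $R$ alone. For non-integer $s$ I would interpret the Sobolev spaces by interpolation and otherwise restrict to integer $r$, as the statement already does.
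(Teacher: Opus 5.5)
Your proposal is correct and follows the same route as the paper. In both, Lax--Milgram with Poincar\'e's inequality gives the weak case $r=0$, and Sobolev multiplication then bootstraps regularity for higher $r$, with every constant depending only on $m$, $R$ and $(M,g,r,s)$. The paper cites elliptic regularity for operators with $H^s$ coefficients at this point. You instead make that step explicit by dividing by $\rho$ and treating $\langle\grad\rho,\grad\phi\rangle_g$ as a source term for the Laplacian, which is a valid self-contained version of the same argument. The small points you leave implicit also follow directly: $\Kop^{\Wass}_{\rho}$ maps $H^{r+1}/\R$ boundedly into $H^{r-1}_0$ by the same product estimate, and it is injective at every level by the $r=0$ case.
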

\begin{proof}
	The weak isomorphism follows from Lax--Milgram and Poincar\'e's inequality. Since $s>d/2+1$, Sobolev multiplication controls the coefficient terms, and elliptic regularity for uniformly elliptic operators with $H^s$ coefficients upgrades the weak solution through the indicated scale \cite{taylor1996pde2}. The kernel consists of constants by the energy identity and the range has zero integral by divergence. Uniform lower ellipticity and the $H^s$ bound give the estimate uniformly on $\mathcal B$.
\end{proof}

\begin{prop}\label{inverse-smooth-dependence-prop}
	For each admissible $r$, the map
	\[
		\rho\longmapsto(\Kop^{\Wass}_{\rho})^{-1}
	\]
	is smooth from $\mathcal U^s_+(M)$ into $\mathcal L(H^{r-1}_0(M),H^{r+1}(M)/\R)$. Its differential is
	\begin{equation}\label{inverse-derivative-eq}
		D(\Kop^{\Wass}_{\rho})^{-1}[\eta]=-(\Kop^{\Wass}_{\rho})^{-1}\bigl(D\Kop^{\Wass}_{\rho}[\eta]\bigr)(\Kop^{\Wass}_{\rho})^{-1},
	\end{equation}
	where
	\begin{equation}\label{operator-density-derivative-eq}
		D\Kop^{\Wass}_{\rho}[\eta]\phi=-\Div(\eta\grad\phi).
	\end{equation}
\end{prop}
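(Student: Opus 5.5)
The plan is to show that $\rho\mapsto\Kop^{\Wass}_{\rho}$ is a smooth map into a space of bounded operators, take values in the open set of isomorphisms, and then compose with operator inversion, which is smooth. The derivative formula will then come from differentiating the identity $\Kop^{\Wass}_{\rho}(\Kop^{\Wass}_{\rho})^{-1}=\id$.

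First, I regard $\Kop^{\Wass}_{\rho}$ as an element of $\mathcal L(H^{r+1}(M)/\R,H^{r-1}_0(M))$. The map $\rho\mapsto\Kop^{\Wass}_{\rho}$ is the restriction to the open set $\mathcal U^s_+(M)$ of the map
\[
\rho\longmapsto\bigl(\phi\mapsto-\Div(\rho\grad\phi)\bigr),
\]
which is linear in $\rho$. It is therefore smooth as soon as it is bounded from $H^s(M)$ into that operator space. Boundedness needs the multiplication estimate
\[
\|\eta\grad\phi\|_{H^r}\leqslant C\|\eta\|_{H^s}\|\grad\phi\|_{H^r},
\]
valid for $0\leqslant r\leqslant s-1$ and $s>d/2+1$ (in fact $s>d/2$ suffices here). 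This is the Sobolev multiplication already invoked in Proposition \ref{sobolev-elliptic-isomorphism-prop}. Applying $\Div\colon H^r\to H^{r-1}$ then gives
\[
\|\Div(\eta\grad\phi)\|_{H^{r-1}}\leqslant C\|\eta\|_{H^s}\|\phi\|_{H^{r+1}/\R}.
\]
For $r=0$ the divergence is read weakly, as in Proposition \ref{sobolev-elliptic-isomorphism-prop}. The image always has zero integral, and constants are annihilated, so the operator is well defined on the quotient. Since the map is linear, its derivative is the map itself evaluated at $\eta$, which is exactly \eqref{operator-density-derivative-eq}.

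Second, Proposition \ref{sobolev-elliptic-isomorphism-prop} shows that each $\Kop^{\Wass}_{\rho}$ with $\rho\in\mathcal U^s_+(M)$ is an isomorphism. The set of invertible elements of $\mathcal L(X,Y)$ between Banach spaces is open, and the inversion map $T\mapsto T^{-1}$ is smooth (indeed analytic) there. Its derivative is $S\mapsto -T^{-1}ST^{-1}$, which follows from the Neumann series.

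Third, the composition of the smooth map $\rho\mapsto\Kop^{\Wass}_{\rho}$ with inversion is smooth. The chain rule then gives \eqref{inverse-derivative-eq}.

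The only step that needs genuine care is the multiplication estimate at the endpoints of the range of $r$: at $r=0$ the estimate must be understood through the weak formulation, and at $r=s-1$ it uses the full regularity of $\eta\in H^s$. The uniform bound \eqref{weighted-elliptic-estimate-eq} is not needed for smoothness itself. It does, however, give local uniform bounds on the derivatives over sets of the form $\mathcal B$.
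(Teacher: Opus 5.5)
Your proposal is correct and follows the paper's own argument. The paper likewise observes that $\rho\mapsto\Kop^{\Wass}_{\rho}$ is affine and continuous into the operator space, uses Proposition \ref{sobolev-elliptic-isomorphism-prop} to place its image in the open set of invertible operators, and obtains both the smoothness and the derivative formula from smoothness of inversion and differentiation of the identity $(\Kop^{\Wass}_{\rho})^{-1}\Kop^{\Wass}_{\rho}=\id$; your explicit Sobolev multiplication estimate supplies the continuity of the coefficient map, which the paper asserts without detail.
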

\begin{proof}
	The coefficient map $\rho\mapsto\Kop^{\Wass}_{\rho}$ is affine and continuous into the stated operator space. Proposition \ref{sobolev-elliptic-isomorphism-prop} places its image in the open set of bounded invertible operators. Smoothness of inversion and differentiation of $(\Kop^{\Wass}_{\rho})^{-1}\Kop^{\Wass}_{\rho}=\id$ give the formulae.
\end{proof}

The fixed-bound set $\{\rho:m\leqslant\rho\leqslant M\}$ is useful for uniform estimates but is closed in the Sobolev affine hyperplane. Smoothness is asserted on $\mathcal U^s_+(M)$, whilst quantitative bounds are taken on subsets such as $\mathcal B$. Approaching a density which vanishes destroys uniform ellipticity; the metric completion may contain such measures even though the smooth bundle description does not extend to them.

\section{Differential calculus for the kernel operator}\label{kernel-calculus-sec}

Let $M=\R^d$ for notation. Fix an integer $r\geqslant1$ and assume that $k$ and its derivatives in the second variable through order $r$ are bounded. If $u\in C_b^r(\R^d;\R^d)$ and $\rho\in L^1(\R^d)$, then
\[
	(\Iop_{\rho}^{*}u)(y)=\int_{\R^d}k(x,y)u(x)\rho(x)\,\dd{x}
\]
defines an element of $C_b^r$, and $\rho\mapsto\Iop_{\rho}^{*}u$ is bounded linear from $L^1$ to $C_b^r$. The same statement holds for signed variations $\eta\in L^1$. When $u=u_{\rho}$ or $f_{\rho}=\log(\rho/\pi)$ varies, the chosen density topology must additionally control the derivatives of $u_{\rho}$, a positive lower bound for $\rho$ on the relevant region, the term $\grad(\eta/\rho)$ and the integrability required by the kernel operator.

\begin{lemma}\label{kernel-density-derivative-lem}
	Fix $u$. The map $\rho\mapsto\Iop_{\rho}^{*}u$ is affine and satisfies
	\begin{equation}\label{kernel-density-derivative-eq}
		D_{\rho}(\Iop_{\rho}^{*}u)[\eta]=\Iop_{\eta}^{*}u.
	\end{equation}
	If $u_{\rho}$ is differentiable in $\rho$, then
	\[
		D_{\rho}(\Iop_{\rho}^{*}u_{\rho})[\eta]=\Iop_{\eta}^{*}u_{\rho}+\Iop_{\rho}^{*}D_{\rho}u_{\rho}[\eta].
	\]
\end{lemma}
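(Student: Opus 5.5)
The first step is to check that, for fixed $u$, the map $\rho\mapsto\Iop_{\rho}^{*}u$ is linear in $\rho$ as a map $L^1(\R^d)\to C_b^r(\R^d;\R^d)$; it is affine once restricted to the hyperplane of unit mass. The formula $(\Iop_{\rho}^{*}u)(y)=\int k(x,y)u(x)\rho(x)\,\dd{x}$ contains $\rho$ only linearly, so $\Iop_{\rho+\eta}^{*}u=\Iop_{\rho}^{*}u+\Iop_{\eta}^{*}u$ holds pointwise in $y$. Here $\Iop_{\eta}^{*}$ for a signed $\eta\in L^1$ is given by the same integral, as in \eqref{stein-linearisation-eq}.

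Next, boundedness into $C_b^r$. For every multi-index $|\alpha|\leqslant r$ I would differentiate under the integral sign in $y$, which dominated convergence permits. This gives
\[
	\bigl|\partial_y^{\alpha}(\Iop_{\eta}^{*}u)(y)\bigr|\leqslant\sup_{x,y}\|\partial_y^{\alpha}k(x,y)\|\,\|u\|_{C^0}\,\|\eta\|_{L^1},
\]
and continuity in $y$ of these derivatives follows from the same domination. The differential of a bounded affine map is its linear part, so the remainder $\Iop_{\rho+\eta}^{*}u-\Iop_{\rho}^{*}u-\Iop_{\eta}^{*}u$ vanishes identically. This proves \eqref{kernel-density-derivative-eq} as a Fréchet derivative with values in $C_b^r$. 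This part needs only boundedness of $u$, and $u\in C_b^r$ is more than enough.

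For the second formula I would treat $\Iop_{\rho}^{*}u$ as a bilinear map $(\rho,u)\mapsto\Iop_{\rho}^{*}u$ from $L^1\times C_b^0$ (or $C_b^r$) into $C_b^r$. The estimate above shows it is bounded by $C\|\rho\|_{L^1}\|u\|_{C^0}$. A bounded bilinear map is smooth, and its differential at $(\rho,u)$ is $(\eta,w)\mapsto\Iop_{\eta}^{*}u+\Iop_{\rho}^{*}w$. Composing with $\rho\mapsto(\rho,u_{\rho})$ and applying the chain rule yields $\Iop_{\eta}^{*}u_{\rho}+\Iop_{\rho}^{*}D_{\rho}u_{\rho}[\eta]$.

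The main obstacle is this last step. The chain rule needs $\rho\mapsto u_{\rho}$ to be differentiable into a space on which the bilinear bound holds, namely bounded continuous fields. For $u_{\rho}=\grad\log(\rho/\pi)$ this is exactly the caveat stated before the lemma: the density topology must give a positive lower bound on $\rho$ and control $\grad(\eta/\rho)$ in $C_b^0$. I would therefore state the second formula under the hypothesis "differentiable into $C_b^0$". Alternatively, I would differentiate into a weighted space $L^{\infty}$ or $L^1(\rho)$ paired with a kernel bound, and simply record that the bilinear estimate continues to apply there.
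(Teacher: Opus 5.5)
Your proof is correct and follows essentially the paper's argument. The paper also derives both identities from bounded linearity of $\rho\mapsto\Iop_{\rho}^{*}u$ from $L^1$ into $C_b^r$ together with the Banach-space product rule, and that product rule is exactly your chain rule for the bounded bilinear map $(\rho,u)\mapsto\Iop_{\rho}^{*}u$ composed with $\rho\mapsto(\rho,u_{\rho})$. Your observation that the estimate uses only $\|u\|_{C^0}$, so that differentiability of $u_{\rho}$ into $C_b^0$ suffices, is a harmless sharpening of the paper's $C_b^r$ hypothesis.
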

\begin{proof}
	Both identities follow from bounded linearity in the $L^1$ density variable and, for $u_{\rho}$, the Banach-space product rule under the stated $C_b^r$ control.
\end{proof}

Fix $f$. The Stein Onsager operator is
\[
	\Kop_{\rho}^{\Stein}f=-\Div\left(\rho\Iop_{\rho}^{*}\grad f\right).
\]
Its density derivative is
\begin{equation}\label{stein-onsager-density-derivative-eq}
	D_{\rho}\Kop_{\rho}^{\Stein}f[\eta]=-\Div\left(\eta\Iop_{\rho}^{*}\grad f+\rho\Iop_{\eta}^{*}\grad f\right).
\end{equation}
If $f=f_{\rho}$ also varies, a third term appears:
\[
	-\Div\left(\rho\Iop_{\rho}^{*}\grad D_{\rho}f_{\rho}[\eta]\right).
\]
Taking $f_{\rho}=\log(\rho/\pi)$ yields \eqref{stein-linearisation-eq}. These formulae are the regular core from which a Stein connection would have to be constructed. Extending them to the Hilbert completions requires uniform estimates on the varying range and nullspace.

The derivative of the effective mobility additionally contains the derivative of the weighted horizontal projection. A uniformly positive Sobolev chart gives the formula
\[
	\mathsf{P}^{\mathrm{hor}}_{\rho}v=\grad(\Kop^{\Wass}_{\rho})^{-1}\Aop_{\rho}v,
\]
so \S\ref{elliptic-appendix-sec} and the preceding kernel derivatives give a regular-core formula for $D\Mop_{\rho}$. Extending this calculation to a smooth field of completed Stein fibres requires uniform control of the varying kernel range and nullspace.

\section{Fisher geometry of exponential families}\label{exponential-appendix-sec}

The Fisher calculations used in \S\ref{statistical-submanifolds-sec} are recorded here for reference. Differentiating \eqref{exponential-family-eq} gives
\begin{equation}\label{exponential-score-eq}
	\partial_i\log q_{\eta}=T_i-\partial_iA(\eta)=T_i-m_i(\eta).
\end{equation}
Therefore
\[
	G^{\Fish}_{ij}(\eta)=\E_{q_{\eta}}[(T_i-m_i)(T_j-m_j)]=C_{ij}(\eta).
\]
Differentiating $m_i=\partial_iA$ gives $\partial_jm_i=C_{ij}$. The dual coordinate change consequently has Jacobian $D_{\eta}m=C$ and $D_m\eta=C^{-1}$. The identity $\eta=\grad_mA^{*}$ gives $\grad_m^2A^{*}=C^{-1}$.

Let $\beta$ be a covector in expectation coordinates. The Fisher sharp $u$ satisfies
\[
	\beta^{\mathsf T}\dot m=u^{\mathsf T}C^{-1}\dot m
\]
for every $\dot m$, hence $u=C\beta$. Negative Fisher gradient flow gives $\dot m=-C\beta$, as used in \eqref{fisher-mirror-moment-eq}. Natural coordinates convert this to $\dot\eta=C^{-1}\dot m=-\beta$.

\section{Pointwise verification of Gaussian affine transport}\label{gaussian-appendix-sec}

Let $y=x-\mu$ and
\[
	\log q_{\mu,\Sigma}(x)=-\frac d2\log(2\pi)-\frac12\log\det\Sigma-\frac12y^{\mathsf T}\Sigma^{-1}y.
\]
A tangent $(u,U)$ gives, after differentiation,
\begin{align}
	\partial_t\log q
	&=-\frac12\tr(\Sigma^{-1}U)+u^{\mathsf T}\Sigma^{-1}y+\frac12y^{\mathsf T}\Sigma^{-1}U\Sigma^{-1}y. \label{gaussian-log-tangent-eq}
\end{align}
Let $v(x)=u+By$. The Gaussian Stein operator is
\begin{align}
	\Tspace_qv
	&=\tr B-\bigl(\Sigma^{-1}y\bigr)^{\mathsf T}(u+By) \notag\\
	&=\tr B-u^{\mathsf T}\Sigma^{-1}y-y^{\mathsf T}\Sigma^{-1}By. \label{gaussian-affine-stein-eq}
\end{align}
If $B\Sigma+\Sigma B^{\mathsf T}=U$, then
\[
	\tr B=\frac12\tr(\Sigma^{-1}U)
\]
and
\[
	y^{\mathsf T}\Sigma^{-1}By=\frac12y^{\mathsf T}\Sigma^{-1}U\Sigma^{-1}y,
\]
because only the symmetric part of $\Sigma^{-1}B$ contributes to the quadratic form. Equations \eqref{gaussian-log-tangent-eq} and \eqref{gaussian-affine-stein-eq} give
\[
	\partial_t\log q=-\Tspace_qv.
\]
Multiplication by $q$ proves pointwise that $\partial_tq=-\Div(qv)$. This verifies the affine lift at density level, beyond the moment calculation of Lemma \ref{affine-moment-lem}.

\section{Normalisations of diffusion and action functionals}\label{normalisation-appendix-sec}

Let the Brownian equation be $\dd{X_t}=\sqrt{2\kappa}\,\dd{W_t}$, whose generator is $\kappa\Delta$. The entropy gradient flow is $\partial_t\rho=\kappa\Delta\rho$ when the Wasserstein Onsager operator is multiplied by $\kappa$. The short time heat kernel has exponential cost $|x-y|^2/(4\kappa h)$. The one step large deviations expansion therefore contains
\[
	\frac{1}{4\kappa h}W_2^2(\rho_0,\rho)+\frac12\Ent(\rho)-\frac12\Ent(\rho_0).
\]
Multiplication by $2\kappa$ gives the minimizing movement functional
\[
	\frac{1}{2h}W_2^2(\rho_0,\rho)+\kappa\Ent(\rho)-\kappa\Ent(\rho_0).
\]
Likewise, the dynamic action is
\[
	\frac{1}{4\kappa}\int_0^T\|\partial_t\rho_t-\kappa\Delta\rho_t\|_{\dot H^{-1}_{\rho_t}}^2\,\dd{t}.
\]
These factors account for the variants found in the large deviations and gradient flow literature. The body of the paper uses generator $\Delta$, corresponding to $\kappa=1$.

\bibliographystyle{amsalpha}
\bibliography{main}

@article{adams2011large,
  author = {Adams, Stefan and Dirr, Nicolas and Peletier, Mark A. and Zimmer, Johannes},
  title = {From a Large-Deviations Principle to the Wasserstein Gradient Flow: A New Micro--Macro Passage},
  journal = {Communications in Mathematical Physics},
  volume = {307},
  number = {3},
  pages = {791--815},
  year = {2011},
  doi = {10.1007/s00220-011-1328-y}
}

@book{ambrosio2008gradient,
  author = {Ambrosio, Luigi and Gigli, Nicola and Savar\'e, Giuseppe},
  title = {Gradient Flows in Metric Spaces and in the Space of Probability Measures},
  edition = {Second},
  publisher = {Birkh\"auser},
  address = {Basel},
  year = {2008}
}

@article{ambrosio2009fokker,
  author = {Ambrosio, Luigi and Savar\'e, Giuseppe and Zambotti, Lorenzo},
  title = {Existence and Stability for {Fokker--Planck} Equations with Log-Concave Reference Measure},
  journal = {Probability Theory and Related Fields},
  volume = {145},
  number = {3--4},
  pages = {517--564},
  year = {2009},
  doi = {10.1007/s00440-008-0177-8}
}

@article{aronszajn1950theory,
  author = {Aronszajn, Nachman},
  title = {Theory of Reproducing Kernels},
  journal = {Transactions of the American Mathematical Society},
  volume = {68},
  number = {3},
  pages = {337--404},
  year = {1950},
  doi = {10.2307/1990404}
}

@book{ay2017information,
  author = {Ay, Nihat and Jost, J\"urgen and L\^e, H\^ong V\^an and Schwachh\"ofer, Lorenz},
  title = {Information Geometry},
  publisher = {Springer},
  address = {Cham},
  year = {2017},
  doi = {10.1007/978-3-319-56478-4}
}

@article{barbour1988stein,
  author = {Barbour, Andrew D.},
  title = {Stein's Method and Poisson Process Convergence},
  journal = {Journal of Applied Probability},
  volume = {25},
  pages = {175--184},
  year = {1988},
  doi = {10.2307/3214155}
}

@book{bogachev2015fokker,
  author = {Bogachev, Vladimir I. and Krylov, Nicolai V. and R\"ockner, Michael and Shaposhnikov, Stanislav V.},
  title = {Fokker--Planck--Kolmogorov Equations},
  publisher = {American Mathematical Society},
  address = {Providence},
  year = {2015},
  doi = {10.1090/surv/207}
}

@book{cardaliaguet2019master,
  author = {Cardaliaguet, Pierre and Delarue, Fran\c{c}ois and Lasry, Jean-Michel and Lions, Pierre-Louis},
  title = {The Master Equation and the Convergence Problem in Mean Field Games},
  series = {Annals of Mathematics Studies},
  volume = {201},
  publisher = {Princeton University Press},
  address = {Princeton},
  year = {2019}
}

@book{carmona2018probabilistic1,
  author = {Carmona, Ren\'e and Delarue, Fran\c{c}ois},
  title = {Probabilistic Theory of Mean Field Games with Applications I},
  publisher = {Springer},
  address = {Cham},
  year = {2018},
  doi = {10.1007/978-3-319-58920-6}
}

@book{carmona2018probabilistic2,
  author = {Carmona, Ren\'e and Delarue, Fran\c{c}ois},
  title = {Probabilistic Theory of Mean Field Games with Applications II},
  publisher = {Springer},
  address = {Cham},
  year = {2018},
  doi = {10.1007/978-3-319-56436-4}
}

@article{chizat2018interpolating,
  author = {Chizat, L\'ena\"ic and Peyr\'e, Gabriel and Schmitzer, Bernhard and Vialard, Fran\c{c}ois-Xavier},
  title = {An Interpolating Distance between Optimal Transport and {Fisher--Rao} Metrics},
  journal = {Foundations of Computational Mathematics},
  volume = {18},
  number = {1},
  pages = {1--44},
  year = {2018},
  doi = {10.1007/s10208-016-9331-y}
}

@article{duong2013wasserstein,
  author = {Duong, Manh Hong and Laschos, Vaios and Renger, D. R. Michiel},
  title = {Wasserstein Gradient Flows from Large Deviations of Many-Particle Limits},
  journal = {ESAIM: Control, Optimisation and Calculus of Variations},
  volume = {19},
  number = {4},
  pages = {1166--1188},
  year = {2013},
  doi = {10.1051/cocv/2013049}
}

@article{erbar2015multiple,
  author = {Erbar, Matthias and Maas, Jan and Renger, D. R. Michiel},
  title = {From Large Deviations to Wasserstein Gradient Flows in Multiple Dimensions},
  journal = {Electronic Communications in Probability},
  volume = {20},
  number = {89},
  pages = {1--12},
  year = {2015},
  doi = {10.1214/ECP.v20-4315}
}

@book{delmoral2004feynman,
  author = {Del Moral, Pierre},
  title = {Feynman--Kac Formulae: Genealogical and Interacting Particle Systems with Applications},
  publisher = {Springer},
  address = {New York},
  year = {2004},
  doi = {10.1007/978-1-4684-9393-1}
}

@article{duncan2023geometry,
  author = {Duncan, Andrew and N\"usken, Nikolas and Szpruch, Lukasz},
  title = {On the Geometry of {Stein} Variational Gradient Descent},
  journal = {Journal of Machine Learning Research},
  volume = {24},
  number = {56},
  pages = {1--39},
  year = {2023}
}

@article{ebin1970groups,
  author = {Ebin, David G. and Marsden, Jerrold},
  title = {Groups of Diffeomorphisms and the Motion of an Incompressible Fluid},
  journal = {Annals of Mathematics},
  volume = {92},
  number = {1},
  pages = {102--163},
  year = {1970},
  doi = {10.2307/1970699}
}

@article{figalli2008existence,
  author = {Figalli, Alessio},
  title = {Existence and Uniqueness of Martingale Solutions for {SDEs} with Rough or Degenerate Coefficients},
  journal = {Journal of Functional Analysis},
  volume = {254},
  number = {1},
  pages = {109--153},
  year = {2008},
  doi = {10.1016/j.jfa.2007.09.020}
}

@article{fisher1922mathematical,
  author = {Fisher, Ronald A.},
  title = {On the Mathematical Foundations of Theoretical Statistics},
  journal = {Philosophical Transactions of the Royal Society of London. Series A},
  volume = {222},
  pages = {309--368},
  year = {1922},
  doi = {10.1098/rsta.1922.0009}
}

@article{fokker1914mittlere,
  author = {Fokker, Adriaan D.},
  title = {Die mittlere Energie rotierender elektrischer Dipole im Strahlungsfeld},
  journal = {Annalen der Physik},
  volume = {348},
  number = {5},
  pages = {810--820},
  year = {1914},
  doi = {10.1002/andp.19143480507}
}

@book{fukushima2011dirichlet,
  author = {Fukushima, Masatoshi and Oshima, Yoichi and Takeda, Masayoshi},
  title = {Dirichlet Forms and Symmetric Markov Processes},
  edition = {Second},
  publisher = {de Gruyter},
  address = {Berlin},
  year = {2011},
  doi = {10.1515/9783110218091}
}

@article{gibilisco1998connections,
  author = {Gibilisco, Paolo and Pistone, Giovanni},
  title = {Connections on Non-Parametric Statistical Manifolds by {Orlicz} Space Geometry},
  journal = {Infinite Dimensional Analysis, Quantum Probability and Related Topics},
  volume = {1},
  number = {2},
  pages = {325--347},
  year = {1998},
  doi = {10.1142/S021902579800017X}
}

@article{gorham2019measuring,
  author = {Gorham, Jackson and Duncan, Andrew B. and Vollmer, Sebastian J. and Mackey, Lester},
  title = {Measuring Sample Quality with Diffusions},
  journal = {Annals of Applied Probability},
  volume = {29},
  number = {5},
  pages = {2884--2928},
  year = {2019},
  doi = {10.1214/19-AAP1467}
}

@article{haussmann1986time,
  author = {Haussmann, Ulrich G. and Pardoux, Etienne},
  title = {Time Reversal of Diffusions},
  journal = {Annals of Probability},
  volume = {14},
  number = {4},
  pages = {1188--1205},
  year = {1986},
  doi = {10.1214/aop/1176992362}
}

@article{jia2022stein,
  author = {Jia, Jun and Li, Peiyu and Meng, Xiaohui},
  title = {Stein Variational Gradient Descent on Infinite-Dimensional Space and Applications to Statistical Inverse Problems},
  journal = {SIAM Journal on Numerical Analysis},
  volume = {60},
  number = {4},
  pages = {2225--2252},
  year = {2022},
  doi = {10.1137/21M1440773}
}

@article{jordan1998variational,
  author = {Jordan, Richard and Kinderlehrer, David and Otto, Felix},
  title = {The Variational Formulation of the {Fokker--Planck} Equation},
  journal = {SIAM Journal on Mathematical Analysis},
  volume = {29},
  number = {1},
  pages = {1--17},
  year = {1998},
  doi = {10.1137/S0036141096303359}
}

@article{kac1949distributions,
  author = {Kac, Mark},
  title = {On Distributions of Certain {Wiener} Functionals},
  journal = {Transactions of the American Mathematical Society},
  volume = {65},
  number = {1},
  pages = {1--13},
  year = {1949},
  doi = {10.2307/1990512}
}

@book{kunita1990stochastic,
  author = {Kunita, Hiroshi},
  title = {Stochastic Flows and Stochastic Differential Equations},
  publisher = {Cambridge University Press},
  address = {Cambridge},
  year = {1990},
  doi = {10.1017/CBO9780511624331}
}

@article{khesin2013geometry,
  author = {Khesin, Boris and Lenells, Jonatan and Misiolek, Gerard and Preston, Stephen C.},
  title = {Curvatures of Sobolev Metrics on Diffeomorphism Groups},
  journal = {Pure and Applied Mathematics Quarterly},
  volume = {9},
  number = {2},
  pages = {291--332},
  year = {2013},
  doi = {10.4310/PAMQ.2013.v9.n2.a4}
}

@article{le2024diffusion,
  author = {Le, H\'oang-Long and Lewis, Andrew D. and Bharath, Karthik and Fallaize, Christopher J.},
  title = {A Diffusion Approach to {Stein}'s Method on {Riemannian} Manifolds},
  journal = {Bernoulli},
  volume = {30},
  number = {2},
  pages = {1542--1568},
  year = {2024},
  doi = {10.3150/23-BEJ1646}
}

@article{liero2018optimal,
  author = {Liero, Matthias and Mielke, Alexander and Savar\'e, Giuseppe},
  title = {Optimal Entropy-Transport Problems and a New {Hellinger--Kantorovich} Distance between Positive Measures},
  journal = {Inventiones Mathematicae},
  volume = {211},
  number = {3},
  pages = {969--1117},
  year = {2018},
  doi = {10.1007/s00222-017-0759-8}
}

@inproceedings{liu2017stein,
  author = {Liu, Qiang},
  title = {Stein Variational Gradient Descent as Gradient Flow},
  booktitle = {Advances in Neural Information Processing Systems},
  volume = {30},
  pages = {3115--3123},
  year = {2017}
}

@inproceedings{liu2016stein,
  author = {Liu, Qiang and Wang, Dilin},
  title = {Stein Variational Gradient Descent: A General Purpose Bayesian Inference Algorithm},
  booktitle = {Advances in Neural Information Processing Systems},
  volume = {29},
  pages = {2378--2386},
  year = {2016}
}

@article{lott2008geometric,
  author = {Lott, John},
  title = {Some Geometric Calculations on Wasserstein Space},
  journal = {Communications in Mathematical Physics},
  volume = {277},
  number = {2},
  pages = {423--437},
  year = {2008},
  doi = {10.1007/s00220-007-0367-3}
}

@article{lu2019scaling,
  author = {Lu, Jianfeng and Lu, Yulong and Nolen, James},
  title = {Scaling Limit of the {Stein} Variational Gradient Descent: The Mean Field Regime},
  journal = {SIAM Journal on Mathematical Analysis},
  volume = {51},
  number = {2},
  pages = {648--671},
  year = {2019},
  doi = {10.1137/18M1187611}
}

@article{miyahara1981infinite,
  author = {Miyahara, Yoshio},
  title = {Infinite Dimensional Langevin Equation and Fokker--Planck Equation},
  journal = {Nagoya Mathematical Journal},
  volume = {81},
  pages = {177--223},
  year = {1981},
  doi = {10.1017/S0027763000019420}
}

@article{mielke2014variational,
  author = {Mielke, Alexander and Peletier, Mark A. and Renger, D. R. Michiel},
  title = {On the Relation between Gradient Flows and the Large-Deviation Principle, with Applications to {Markov} Chains and Diffusion},
  journal = {Potential Analysis},
  volume = {41},
  number = {4},
  pages = {1293--1327},
  year = {2014},
  doi = {10.1007/s11118-014-9418-5}
}

@article{moser1965volume,
  author = {Moser, J\"urgen},
  title = {On the Volume Elements on a Manifold},
  journal = {Transactions of the American Mathematical Society},
  volume = {120},
  number = {2},
  pages = {286--294},
  year = {1965},
  doi = {10.2307/1994022}
}

@book{nelson1967dynamical,
  author = {Nelson, Edward},
  title = {Dynamical Theories of Brownian Motion},
  publisher = {Princeton University Press},
  address = {Princeton},
  year = {1967},
  note = {Second edition, 2001}
}

@article{newton2012hilbert,
  author = {Newton, Nigel J.},
  title = {An Infinite-Dimensional Statistical Manifold Modelled on Hilbert Space},
  journal = {Journal of Functional Analysis},
  volume = {263},
  number = {6},
  pages = {1661--1681},
  year = {2012},
  doi = {10.1016/j.jfa.2012.05.023}
}

@article{newton2016balanced,
  author = {Newton, Nigel J.},
  title = {Infinite-Dimensional Statistical Manifolds Based on a Balanced Chart},
  journal = {Bernoulli},
  volume = {22},
  number = {2},
  pages = {711--731},
  year = {2016},
  doi = {10.3150/14-BEJ673}
}

@article{newton2018differentiable,
  author = {Newton, Nigel J.},
  title = {Manifolds of Differentiable Densities},
  journal = {ESAIM: Probability and Statistics},
  volume = {22},
  pages = {19--34},
  year = {2018},
  doi = {10.1051/ps/2018003}
}

@article{newton2019sobolev,
  author = {Newton, Nigel J.},
  title = {A Class of Non-Parametric Statistical Manifolds Modelled on Sobolev Space},
  journal = {Information Geometry},
  volume = {2},
  pages = {283--312},
  year = {2019},
  doi = {10.1007/s41884-019-00024-z}
}

@article{nusken2023large,
  author = {N\"usken, Nikolas and Renger, D. R. Michiel},
  title = {Stein Variational Gradient Descent: Many-Particle and Long-Time Asymptotics},
  journal = {Foundations of Data Science},
  volume = {5},
  number = {3},
  pages = {286--320},
  year = {2023},
  doi = {10.3934/fods.2022023}
}

@article{otto2001geometry,
  author = {Otto, Felix},
  title = {The Geometry of Dissipative Evolution Equations: The Porous Medium Equation},
  journal = {Communications in Partial Differential Equations},
  volume = {26},
  number = {1--2},
  pages = {101--174},
  year = {2001},
  doi = {10.1081/PDE-100002243}
}

@article{pistone1995infinite,
  author = {Pistone, Giovanni and Sempi, Carlo},
  title = {An Infinite-Dimensional Geometric Structure on the Space of All the Probability Measures Equivalent to a Given One},
  journal = {Annals of Statistics},
  volume = {23},
  number = {5},
  pages = {1543--1561},
  year = {1995},
  doi = {10.1214/aos/1176324311}
}

@article{planck1917satz,
  author = {Planck, Max},
  title = {\"Uber einen Satz der statistischen Dynamik und seine Erweiterung in der Quantentheorie},
  journal = {Sitzungsberichte der K\"oniglich Preussischen Akademie der Wissenschaften},
  pages = {324--341},
  year = {1917}
}

@book{rockafellar1970convex,
  author = {Rockafellar, R. Tyrrell},
  title = {Convex Analysis},
  publisher = {Princeton University Press},
  address = {Princeton},
  year = {1970}
}

@article{trevisan2016well,
  author = {Trevisan, Dario},
  title = {Well-Posedness of Multidimensional Diffusion Processes with Weakly Differentiable Coefficients},
  journal = {Electronic Journal of Probability},
  volume = {21},
  pages = {1--41},
  year = {2016},
  doi = {10.1214/16-EJP445}
}

@book{villani2003topics,
  author = {Villani, C\'edric},
  title = {Topics in Optimal Transportation},
  publisher = {American Mathematical Society},
  address = {Providence},
  year = {2003}
}

@book{villani2009optimal,
  author = {Villani, C\'edric},
  title = {Optimal Transport: Old and New},
  publisher = {Springer},
  address = {Berlin},
  year = {2009},
  doi = {10.1007/978-3-540-71050-9}
}

@article{wu2020lions,
  author = {Wu, Cong and Zhang, Jianfeng},
  title = {An Elementary Proof for the Structure of {Lions} Derivative},
  journal = {Electronic Communications in Probability},
  volume = {25},
  pages = {1--11},
  year = {2020},
  doi = {10.1214/20-ECP330}
}

@book{daPrato2014stochastic,
  author = {Da Prato, Giuseppe and Zabczyk, Jerzy},
  title = {Stochastic Equations in Infinite Dimensions},
  edition = {Second},
  publisher = {Cambridge University Press},
  address = {Cambridge},
  year = {2014},
  doi = {10.1017/CBO9781107295513}
}

@article{benamou2000computational,
  author = {Benamou, Jean-David and Brenier, Yann},
  title = {A Computational Fluid Mechanics Solution to the {Monge--Kantorovich} Mass Transfer Problem},
  journal = {Numerische Mathematik},
  volume = {84},
  number = {3},
  pages = {375--393},
  year = {2000},
  doi = {10.1007/s002110050002}
}

@article{onsager1931reciprocal1,
  author = {Onsager, Lars},
  title = {Reciprocal Relations in Irreversible Processes. {I}},
  journal = {Physical Review},
  volume = {37},
  number = {4},
  pages = {405--426},
  year = {1931},
  doi = {10.1103/PhysRev.37.405}
}

@article{onsager1931reciprocal2,
  author = {Onsager, Lars},
  title = {Reciprocal Relations in Irreversible Processes. {II}},
  journal = {Physical Review},
  volume = {38},
  number = {12},
  pages = {2265--2279},
  year = {1931},
  doi = {10.1103/PhysRev.38.2265}
}

@article{rao1945information,
  author = {Rao, C. R.},
  title = {Information and the Accuracy Attainable in the Estimation of Statistical Parameters},
  journal = {Bulletin of the Calcutta Mathematical Society},
  volume = {37},
  pages = {81--91},
  year = {1945}
}

@book{cencov1982statistical,
  author = {Cencov, N. N.},
  title = {Statistical Decision Rules and Optimal Inference},
  publisher = {American Mathematical Society},
  address = {Providence},
  year = {1982}
}

@inproceedings{stein1972bound,
  author = {Stein, Charles},
  title = {A Bound for the Error in the Normal Approximation to the Distribution of a Sum of Dependent Random Variables},
  booktitle = {Proceedings of the Sixth Berkeley Symposium on Mathematical Statistics and Probability},
  volume = {2},
  pages = {583--602},
  year = {1972}
}

@book{stein1986approximate,
  author = {Stein, Charles},
  title = {Approximate Computation of Expectations},
  publisher = {Institute of Mathematical Statistics},
  address = {Hayward, California},
  series = {IMS Lecture Notes--Monograph Series},
  volume = {7},
  year = {1986}
}

@inproceedings{liu2016kernelized,
  author = {Liu, Qiang and Lee, Jason D. and Jordan, Michael I.},
  title = {A Kernelized {Stein} Discrepancy for Goodness-of-Fit Tests},
  booktitle = {Proceedings of the 33rd International Conference on Machine Learning},
  series = {Proceedings of Machine Learning Research},
  volume = {48},
  pages = {276--284},
  year = {2016}
}

@inproceedings{gorham2017measuring,
  author = {Gorham, Jackson and Mackey, Lester},
  title = {Measuring Sample Quality with Kernels},
  booktitle = {Proceedings of the 34th International Conference on Machine Learning},
  series = {Proceedings of Machine Learning Research},
  volume = {70},
  pages = {1292--1301},
  year = {2017}
}

@article{micchelli2005learning,
  author = {Micchelli, Charles A. and Pontil, Massimiliano},
  title = {On Learning Vector-Valued Functions},
  journal = {Neural Computation},
  volume = {17},
  number = {1},
  pages = {177--204},
  year = {2005},
  doi = {10.1162/0899766052530802}
}

@article{carmeli2010vector,
  author = {Carmeli, Claudio and De Vito, Ernesto and Toigo, Alessandro and Umanit\`a, Veronica},
  title = {Vector Valued Reproducing Kernel {Hilbert} Spaces and Universality},
  journal = {Analysis and Applications},
  volume = {8},
  number = {1},
  pages = {19--61},
  year = {2010},
  doi = {10.1142/S0219530510001503}
}

@article{hamilton1982inverse,
  author = {Hamilton, Richard S.},
  title = {The Inverse Function Theorem of {Nash} and {Moser}},
  journal = {Bulletin of the American Mathematical Society},
  volume = {7},
  number = {1},
  pages = {65--222},
  year = {1982},
  doi = {10.1090/S0273-0979-1982-15004-2}
}

@book{kriegl1997convenient,
  author = {Kriegl, Andreas and Michor, Peter W.},
  title = {The Convenient Setting of Global Analysis},
  publisher = {American Mathematical Society},
  address = {Providence},
  series = {Mathematical Surveys and Monographs},
  volume = {53},
  year = {1997}
}

@book{dynkin1965markov,
  author = {Dynkin, E. B.},
  title = {{Markov} Processes},
  publisher = {Springer},
  address = {Berlin},
  year = {1965}
}

@article{hormander1967hypoelliptic,
  author = {H\"ormander, Lars},
  title = {Hypoelliptic Second Order Differential Equations},
  journal = {Acta Mathematica},
  volume = {119},
  pages = {147--171},
  year = {1967},
  doi = {10.1007/BF02392081}
}

@incollection{bakry1985diffusions,
  author = {Bakry, Dominique and \`Emery, Michel},
  title = {Diffusions Hypercontractives},
  booktitle = {S\'eminaire de Probabilit\'es XIX, 1983/84},
  series = {Lecture Notes in Mathematics},
  volume = {1123},
  pages = {177--206},
  publisher = {Springer},
  address = {Berlin},
  year = {1985}
}

@inproceedings{korba2020nonasymptotic,
  author = {Korba, Anna and Salim, Adil and Arbel, Michael and Luise, Giulia and Gretton, Arthur},
  title = {A Non-Asymptotic Analysis for {Stein} Variational Gradient Descent},
  booktitle = {Advances in Neural Information Processing Systems},
  volume = {33},
  pages = {4672--4682},
  year = {2020}
}

@inproceedings{salim2022convergence,
  author = {Salim, Adil and Sun, Lukang and Richt\'arik, Peter},
  title = {A Convergence Theory for {SVGD} in the Population Limit under Talagrand's Inequality {T1}},
  booktitle = {Proceedings of the 39th International Conference on Machine Learning},
  series = {Proceedings of Machine Learning Research},
  volume = {162},
  pages = {19139--19152},
  year = {2022}
}

@inproceedings{shi2023finite,
  author = {Shi, Jiaxin and Mackey, Lester},
  title = {A Finite-Particle Convergence Rate for {Stein} Variational Gradient Descent},
  booktitle = {Advances in Neural Information Processing Systems},
  volume = {36},
  pages = {26831--26844},
  year = {2023}
}

@inproceedings{banerjee2025improved,
  author = {Banerjee, Sayan and Balasubramanian, Krishnakumar and Ghosal, Promit},
  title = {Improved Finite-Particle Convergence Rates for {Stein} Variational Gradient Descent},
  booktitle = {International Conference on Learning Representations},
  year = {2025},
  note = {arXiv:2409.08469}
}

@article{carrillo2025stability,
  author = {Carrillo, Jos\'e A. and Skrzeczkowski, Jakub},
  title = {Convergence and Stability Results for the Particle System in the {Stein} Gradient Descent Method},
  journal = {Mathematics of Computation},
  volume = {94},
  number = {354},
  pages = {1793--1814},
  year = {2025}
}

@article{he2025regularized,
  author = {He, Ye and Balasubramanian, Krishnakumar and Sriperumbudur, Bharath K. and Lu, Jianfeng},
  title = {Regularized {Stein} Variational Gradient Flow},
  journal = {Foundations of Computational Mathematics},
  volume = {25},
  number = {4},
  pages = {1199--1257},
  year = {2025},
  doi = {10.1007/s10208-024-09663-w}
}

@book{taylor1996pde2,
  author = {Taylor, Michael E.},
  title = {Partial Differential Equations II: Qualitative Studies of Linear Equations},
  publisher = {Springer},
  address = {New York},
  year = {1996}
}

\end{document}